\DeclareMathOperator{\id}{id}
\DeclareMathOperator{\realizable}{re}
\DeclareMathOperator{\Proj}{Proj}
\DeclareMathOperator{\hdim}{\dim_H}
\DeclareMathOperator{\bdim}{\dim_B}
\DeclareMathOperator{\ubdim}{\overline{\dim}_B}
\DeclareMathOperator{\lbdim}{\underline{\dim}_B}
\DeclareMathOperator{\covdim}{\dim_{cov}}
\theoremstyle{plain}
\newtheorem{theorem}{Theorem}[section]
\newtheorem{lemma}[theorem]{Lemma}
\newtheorem{proposition}[theorem]{Proposition}
\newtheorem{corollary}[theorem]{Corollary}
\newtheorem*{observation}{Observation}
\theoremstyle{remark}
\newtheorem{remark}[theorem]{Remark}
\begin{document}

\title{On the error-sum function of Pierce expansions}

\author{Min Woong Ahn}
\address{Department of Mathematics, SUNY at Buffalo, Buffalo, NY 14260-2900, USA}
\email{minwoong@buffalo.edu}

\date{October 8, 2023}

\subjclass[2020]{Primary 28A80; Secondary 11K55, 26A18, 33E20, 41A58}
\keywords{Pierce expansion, error-sum function, Hausdorff dimension, box-counting dimension, covering dimension}

\begin{abstract}
We introduce the error-sum function of Pierce expansions. Some basic properties of the error-sum function are analyzed. We also examine the fractal property of the graph of it by calculating the Hausdorff dimension, the box-counting dimension, and the covering dimension of the graph.
\end{abstract}

\maketitle

\tableofcontents

\section{Introduction} \label{Section 1}

The notion of the error-sum function was first studied by Ridley and Petruska \cite{RP00} in the context of regular continued fraction expansion. For any real number $x$, the error-sum function of the continued fraction expansion is defined by
\[
P (x) \coloneqq \sum_{n=0}^\infty q_n(x) \left( x - \frac{p_n(x)}{q_n(x)} \right),
\]
where
\[
\frac{p_n(x)}{q_n(x)} \coloneqq [a_0(x); a_1(x), a_2(x), \dotsc, a_n(x)] \coloneqq a_0(x) + \cfrac{1}{a_1(x) + \cfrac{1}{a_2(x) + \cfrac{1}{\ddots + \cfrac{1}{a_n(x)}}}}
\]
is the $n$th convergent (or approximant) of the continued fraction expansion, with $p_n(x)$, $q_n(x)$ coprime, $a_0(x)$ an integer, and $a_1(x), \dotsc, a_n(x)$ positive integers. For any rational number $x$, $a_k(x)$ are undefined for some point and on, and hence $P(x)$ is a series of finitely many terms. In such case, $x = [a_0(x); a_1(x), \dotsc, a_n(x)]$ for some $n \geq 0$ and if, further, $n \geq 1$ then $a_n(x)>1$. In fact, Petruska \cite{Pet92} used the error-sum function to prove the existence of a $q$-series $F(z) = 1 + \sum_{n=1}^\infty \left( \prod_{k=1}^n (A-q^k) \right) z^n$ with the radius of convergence $R$ for arbitrary given $R>1$. Here, $A = e^{2 \pi i P(\beta)}$ and $q = e^{2 \pi i \beta}$, where  $\beta$ is some irrational number satisfying certain conditions in terms of the $q_n(\beta)$. Moreover, there are a number of studies using error-sum functions to obtain number-theoretical results. See, e.g., \cite{AB16, Els11, Els14, ES11, ES12} for further applications of error-sum functions.

The continued fraction expansion, along with the decimal expansion, is one of the most famous representations of a real number. Since there is, as is well known, a wide range of representations of real numbers (see \cite{Gal76} and \cite{Sch95} for details), it was natural for intrigued researchers to define the error-sum function for other types of representations and investigate its basic properties. To name but a few, the error-sum functions were defined and studied in the context of the tent map base series \cite{DT11}, the classical L{\"u}roth series \cite{SW07}, the alternating L{\"u}roth series \cite{SMZ06}, and the $\alpha$-L{\"u}roth series \cite{CWY14}. In the previous studies, the list of examined basic properties includes, but is not limited to, boundedness, continuity, integrality, and intermediate value property (or Darboux property) of the error-sum function, and the Hausdorff dimension of the graph of the function. 

The {\em Pierece expansion} is another classical representation of real numbers introduced by Pierce \cite{Pie29} about a century ago. Since then, a number of studies were conducted to study the arithmetic and metric properties of the Pierce expansions. See, e.g., \cite{DDF22, ES91, Fan15, PVB98, Rem54, Sha84, Sha86, Sha93, Var17, VBP99}. It has proven to be useful in number theory, and we mention two applications among others. Firstly, the Pierce expansion provides us with a simple irrationality proof of a real number (see \cite[p.~24]{Sha86}). {\em A real number has an infinite Pierce expansion if and only if it is irrational}. For instance, the irrationalities of $1-e^{-1}$, $\sin 1$, and $\cos 1$ follow, respectively, from their infinite Pierce expansions which coincide with their usual series expansions obtained from the Maclaurin series. As for the other application, Varona \cite{Var17} constructed transcendental numbers by means of Pierce expansions.

Although the Pierce expansion has a long history and is widely studied, different from other representations mentioned above, its error-sum function has not yet been studied. In this paper, we define the error-sum function of the Pierce expansion, and analyze its basic properties and fractal properties of its graph.

The paper is organized as follows. In Section \ref{Section 2}, we introduce some elementary notions of Pierce expansion and then define the error-sum function of Pierce expansion. In Section \ref{Section 3}, we investigate the basic properties, e.g., boundedness and continuity, of the error-sum function. In Section \ref{Section 4}, we determine the Hausdorff dimension, the box-counting dimension, and the covering dimension of the graph of the error-sum function.

Throughout the paper, $\mathbb{N}$ denotes the set of positive integers, $\mathbb{N}_0$ the set of non-negative integers, and $\mathbb{N}_\infty \coloneqq \mathbb{N} \cup \{ \infty \}$ the set of extended positive integers. Following the convention, we define $\infty + c \coloneqq \infty$ and $c/\infty \coloneqq 0$ for any constant $c \in \mathbb{R}$. We denote the Lebesgue measure on $[0,1]$ by $\lambda$. For any subset $A$ of a topological space $X$, the closure of $A$ is denoted by $\overline{A}$. Given any function $g \colon A \to B$, we write the preimage of any singleton $\{ b \} \subseteq B$ under $g$ simply as $g^{-1}(b)$ instead of $g^{-1}(\{ b \})$.

\section{Pierce expansion} \label{Section 2}

This section is devoted to introducing some basic notions of the Pierce expansion. We refer the reader to \cite{DDF22, ES91, Fan15, PVB98, Pie29, Rem54, Sha84, Sha86, Sha93, Var17, VBP99} or \cite[Chapter~2]{Sch95} for arithmetic and metric properties of the Pierce expansion.

The classical Pierce expansion is concerned with the numbers in the half-open unit interval $(0,1]$. In this paper, we extend our scope to the numbers in the closed unit interval $I \coloneqq [0,1]$. This extension is consistent with our use of $\mathbb{N}_\infty$ instead of $\mathbb{N}$ in this paper.

To dynamically generate the Pierce expansion of $x \in I$, we begin with two maps $d_1 \colon I \to \mathbb{N}_\infty$ and $T \colon I \to I$ given by
\[
d_1(x) \coloneqq \begin{cases} \left\lfloor 1/x \right\rfloor, &\text{if } x \neq 0; \\ \infty, &\text{if } x = 0, \end{cases} \quad \text{and} \quad T(x) \coloneqq \begin{cases} 1 - d_1 (x) \cdot x, &\text{if } x \neq 0; \\ 0, &\text{if } x =0, \end{cases}
\]
respectively, where $\lfloor y \rfloor$ denotes the largest integer not exceeding $y \in \mathbb{R}$. Observe that by definition, for each $n \in \mathbb{N}$, we have $d_1(x) =n$ if and only if $x \in I$ lies in the interval $( 1/(n+1), 1/n ]$ on which $T$ is linear. For each $n \in \mathbb{N}$, we write $T^n$ for the $n$th iterate of $T$, and $T^0 \coloneqq \id_{I}$. For notational convenience, we write $T^n x$ for $T^n (x)$ whenever no confusion could arise. 

Given $x \in I$, we define the sequence of {\em digits} $( d_n (x) )_{n \in \mathbb{N}}$ by $d_n(x) \coloneqq d_1 (T^{n-1}x)$ for each $n \in \mathbb{N}$. Then, for any $n \in \mathbb{N}$, by definitions of the map $T$ and the digits, we have
\begin{align} \label{T n-1 formula}
T^{n-1}x = \frac{1}{d_1(T^{n-1}x)} - \frac{T(T^{n-1}x)}{d_1(T^{n-1}x)} = \frac{1}{d_n(x)} - \frac{T^nx}{d_n(x)}.
\end{align}

We recall two well-known facts about the digits in the following proposition. In particular, part (i) characterizes the digit sequence, and it is stated in any study of Pierce expansions with or without proof. We include the proof to make it clear that the replacement of $(0,1]$ and $\mathbb{N}$ by $I$ and $\mathbb{N}_\infty$, respectively, does not violate the basic properties.

\begin{proposition} [See \cite{Sha86} and {\cite[Proposition~2.2]{Fan15}}] \label{basic facts}
Let $x \in I$ and $n \in \mathbb{N}$. Then the following hold.
\begin{enumerate}[label=(\roman*)]
\item
$d_{n+1}(x) \geq d_n(x) + 1$.
\item
$d_n(x) \geq n$.
\end{enumerate}
\end{proposition}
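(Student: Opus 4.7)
The plan is to prove (i) first by a case split on whether the relevant iterates of $T$ vanish, and then derive (ii) from (i) by a one-line induction.

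For (i), I will unravel the definition using the formula $T^n x = 1 - d_n(x) \cdot T^{n-1}x$ whenever $T^{n-1} x \neq 0$. The cases where $T^{n-1}x = 0$ or $T^n x = 0$ are handled immediately by the convention $\infty + c = \infty$ and the fact that $d_k(x) = \infty$ precisely when $T^{k-1}x = 0$; in each such case one of $d_n(x)$, $d_{n+1}(x)$ is infinite and the inequality is trivial. The substantive case is $T^{n-1}x, T^n x \in (0,1]$. Setting $m := d_n(x) = \lfloor 1/T^{n-1}x \rfloor$, so that $T^{n-1}x \in (1/(m+1), 1/m]$, I compute
\[
T^n x = 1 - m T^{n-1} x < 1 - \frac{m}{m+1} = \frac{1}{m+1}.
\]
This upper bound on $T^n x$ forces $1/T^n x > m+1$, hence $d_{n+1}(x) = \lfloor 1/T^n x \rfloor \geq m+1 = d_n(x) + 1$, which is exactly (i).

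For (ii), I will simply induct on $n$. The base case $d_1(x) \geq 1$ is immediate from the definition, since $x \in [0,1]$ gives $1/x \geq 1$ when $x \neq 0$, and $d_1(0) = \infty \geq 1$. The inductive step uses part (i): if $d_n(x) \geq n$, then $d_{n+1}(x) \geq d_n(x) + 1 \geq n+1$.

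The only mild obstacle is bookkeeping around the convention $\infty + c = \infty$ and the extension of the state space from $(0,1]$ to $[0,1]$, since $0$ is a fixed point of $T$ at which the formula $T^n x = 1 - d_n(x) T^{n-1}x$ degenerates. This is precisely why the case split must be done carefully before invoking the arithmetic argument above; once the infinite-digit cases are dispatched, the rest is a direct calculation.
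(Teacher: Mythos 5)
Your proof is correct and follows essentially the same route as the paper: a case split on whether the relevant iterates of $T$ vanish (so that some digit is $\infty$), followed by the elementary bound $T^n x < 1/(m+1)$ in the finite case, and then (ii) by iterating (i) starting from $d_1(x) \geq 1$. The only cosmetic difference is that you spell out the inequality $T^n x = 1 - mT^{n-1}x < 1/(m+1)$ explicitly whereas the paper simply cites the definition of $T$, and you phrase (ii) as an induction rather than as an $(n-1)$-fold application of (i); these are the same argument.
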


\begin{proof}
(i) 
If $d_n (x) = \infty$, then $T^{n-1}x=0$ and so $T^n x = 0$, which implies that $d_{n+1}(x) = \infty$. Since $\infty+1 = \infty$ by convention, we conclude that $d_{n+1}(x) = d_n(x)+1$.

Assume $d_n(x) \in \mathbb{N}$. Then $T^{n-1}x \neq 0$ and $T^{n-1} x \in ( 1/(k+1), 1/k ]$ for some $k \in \mathbb{N}$. Hence, $d_n(x) = k$, and by definition of $T$, it follows that $T^n x \in [ 0, 1/(k+1) )$. Now $d_{n+1}(x) = \infty$ if $T^nx=0$, and $k+1 \leq d_{n+1} (x) < \infty$ if $T^nx \neq 0$. In either case, $d_{n+1}(x) \geq d_n(x) + 1$.

(ii)
Clearly, $d_1(x) \geq 1$ by definition of $d_1$. If $n \geq 2$, using part (i) $(n-1)$ times, we obtain $d_n(x) \geq d_{n-1}(x) + 1 \geq \dotsb \geq d_1(x) + (n-1)$, and thus $d_n(x) \geq n$.
\end{proof}

We shall consider a symbolic space which is a subspace of $\mathbb{N}_\infty^\mathbb{N}$ closely related to Pierce expansions. Let $\Sigma_0 \coloneqq \{ (\sigma_k)_{k \in \mathbb{N}} \in \{ \infty \}^\mathbb{N} \}$, and for each $n \in \mathbb{N}$, let
\begin{align*}
\Sigma_n 
&\coloneqq \{ (\sigma_k)_{k \in \mathbb{N}} \in \mathbb{N}^n \times \{ \infty \}^{\mathbb{N} \setminus \{ 1, \dotsc, n \}} : \sigma_1 < \sigma_2 < \dotsb < \sigma_n  \}.
\end{align*}
For ease of notation, we will occasionally write $(\sigma_k)_{k \in \mathbb{N}} \in \Sigma_n$ as $(\sigma_1, \dotsc, \sigma_n)$ in place of $(\sigma_1, \dotsc, \sigma_n, \allowbreak \infty, \infty, \dotsc)$. We also define
\begin{align*}
\Sigma_\infty &\coloneqq \{ (\sigma_k)_{k \in \mathbb{N}} \in \mathbb{N}^\mathbb{N} : \sigma_k < \sigma_{k+1} \text{ for all } k \in \mathbb{N} \}.
\end{align*}
Then $\Sigma_n$, $n \in \mathbb{N}_0$, consists of a sequence with strictly increasing $n$ initial terms and $\infty$ for the remaining terms, and $\Sigma_\infty$ consists of strictly increasing infinite sequences of positive integers. Finally, let
\begin{align*}
\Sigma &\coloneqq \bigcup_{n \in \mathbb{N}_0} \Sigma_n \cup \Sigma_\infty
\end{align*}
in $\mathbb{N}_\infty^\mathbb{N}$. Each element of $\Sigma$ is said to be a {\em Pierce sequence}. In view of Proposition \ref{basic facts}(i), for any $x \in I$, the digit sequence $(d_n(x))_{n \in \mathbb{N}}$ is a Pierce sequence. We say $\sigma \coloneqq (\sigma_k)_{k \in \mathbb{N}} \in \Sigma$ is {\em realizable} if there exists $x \in I$ such that $d_k(x) = \sigma_k$ for all $k \in \mathbb{N}$, and we denote by $\Sigma_{\realizable}$ the collection of all realizable Pierce sequences. Note that, for any $(\sigma_n)_{n \in \mathbb{N}} \in \Sigma$, we have
\begin{align} \label{sigma n bound}
\sigma_n \geq n
\end{align}
for all $n \in \mathbb{N}$, which is analogous to Proposition \ref{basic facts}(ii).

It is well known that for each $x \in I$, the iterations of $T$ yield a unique expansion
\begin{align} \label{series expansion}
x = \sum_{n=1}^\infty \frac{(-1)^{n+1}}{d_1(x) \dotsm d_n(x)} = \cfrac{1-\cfrac{1-\cfrac{1-\dotsb}{d_3(x)}}{d_2(x)}}{d_1(x)},
\end{align}
where the digit sequence $( d_n(x) )_{n \in \mathbb{N}}$ is a realizable Pierce sequence. (See Proposition \ref{characterization of Sigma re} below.) The expression \eqref{series expansion} is called the {\em Pierce expansion}, {\em Pierce (ascending) continued fraction}, or {\em alternating Engel expansion} of $x$. We denote \eqref{series expansion} by
\[
x = [d_1(x), d_2(x), \dotsc, d_n(x), \dotsc]_P.
\]
For brevity, if the digit is $\infty$ at some point and on, i.e., if $x = [d_1(x), \dotsc, d_n(x), \infty, \infty, \dotsc]_P$ with $d_n(x) < \infty$ for some $n \in \mathbb{N}$, then we write $x = [d_1(x), \dotsc, d_n(x)]_P$ and say that $x$ has a {\em finite} Pierce expansion of length $n$. As mentioned in Section \ref{Section 1}, it is a classical result that the Pierce expansion of $x \in (0,1]$ is finite if and only if $x$ is rational. Since $0 =  [\infty, \infty, \dotsc]_P$, we may write the Pierce expansion of $0$ as $[ \, ]_P$, which is of length zero. Thus, $x \in I$ has a finite Pierce expansion if and only if $x$ is rational.

\begin{proposition} [See {\cite[pp. 23--24]{Sha86}}] \label{digit condition proposition}
For any $x \in I$, if its Pierce expansion is of length $n \geq 2$, then $d_{n-1}(x) + 1 < d_n(x)$.
\end{proposition}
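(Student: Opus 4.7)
The plan is to proceed by contradiction. Since $x$ has Pierce expansion of length exactly $n$, we have $d_n(x) < \infty$ and $d_{n+1}(x) = \infty$, equivalently $T^{n-1}x \in (0,1]$ and $T^n x = 0$. From Proposition \ref{basic facts}(i) applied at index $n-1$ we also get $d_{n-1}(x) < \infty$ (otherwise $d_n(x) = \infty+1 = \infty$), so $T^{n-2}x \in (0,1]$ as well. That same proposition gives the weak inequality $d_n(x) \geq d_{n-1}(x)+1$ for free, so the task reduces to ruling out the boundary case $d_n(x) = d_{n-1}(x)+1$.

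First, I would extract a closed form for $T^{n-1}x$ by plugging $T^n x = 0$ into \eqref{T n-1 formula} at index $n$, which collapses to $T^{n-1}x = 1/d_n(x)$. Second, I would expand $T^{n-1}x$ one step back: writing $k \coloneqq d_{n-1}(x)$ and using the definition of $T$ at $T^{n-2}x$, we have $T^{n-1}x = 1 - k \cdot T^{n-2}x$. Now assume for contradiction $d_n(x) = k+1$. Substituting $T^{n-1}x = 1/(k+1)$ into the second identity and solving for $T^{n-2}x$ yields $T^{n-2}x = 1/(k+1)$, so by definition $d_{n-1}(x) = \lfloor 1/T^{n-2}x \rfloor = k+1$, contradicting $d_{n-1}(x) = k$.

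I do not anticipate any real obstacle: the argument is essentially a two-line algebraic manipulation once the right identities are read off. The only care needed is to verify that each iterate $T^{j}x$ appearing (for $j = n-2, n-1$) is nonzero, so that the piecewise definitions of $d_1$ and $T$ that involve division and floor are the applicable branches; this nonvanishing was observed at the outset using the length hypothesis and Proposition \ref{basic facts}(i). Conceptually, the point is that a finite Pierce expansion of length $n$ forces $T^{n-1}x$ to have the very rigid form $1/d_n(x)$, and the borderline case $d_n = d_{n-1}+1$ would propagate this rigidity one step further to make $T^{n-2}x$ also equal $1/(d_{n-1}+1)$, which is incompatible with the floor that defines $d_{n-1}$.
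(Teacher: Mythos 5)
Your proof is correct and takes essentially the same route as the paper: both argue by contradiction from the assumed equality $d_n(x) = d_{n-1}(x)+1$, use the length hypothesis $T^nx = 0$ together with \eqref{T n-1 formula} to pin down $T^{n-2}x = 1/(d_{n-1}(x)+1)$, and then derive a contradiction from that value. The only cosmetic difference is where the contradiction is read off: you observe directly that $\lfloor 1/T^{n-2}x \rfloor = d_{n-1}(x)+1 \neq d_{n-1}(x)$, while the paper pushes one step further to $T^{n-1}x = 0$ and hence $d_n(x) = \infty$; both stem from the same computed value of $T^{n-2}x$.
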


\begin{proof}
The result follows from the definition of the digits. To see this, suppose otherwise. Put $M \coloneqq d_{n-1}(x)$ for some $M \in \mathbb{N}$, so that $d_{n}(x) = M+1$ by Proposition \ref{basic facts}(i). Since $d_{n+1}(x) = \infty$, we have $T^nx=0$. By \eqref{T n-1 formula}, we see that
\[
T^{n-2} x = \frac{1}{d_{n-1}(x)} - \frac{1}{d_{n-1}(x)} \left( \frac{1}{d_n(x)} - \frac{T^nx}{d_n(x)} \right) = \frac{1}{M} - \frac{1}{M(M+1)} = \frac{1}{M+1},
\]
and so $T^{n-1} x =0$. It follows that $d_n(x) = \infty$, which is a contradiction.
\end{proof}

We denote by $f \colon I \to \Sigma$ the map sending a number in $I$ to its sequence of Pierce expansion digits, that is, for each $x \in I$, $f$ is given by
\begin{align*} 
f(x) \coloneqq (d_n(x))_{n \in \mathbb{N}} = (d_1(x), d_2(x), d_3(x), \dotsc).
\end{align*}
Clearly, $f$ is well defined. We also note that $f(I) = \Sigma_{\realizable}$ by definition. 

Conversely, we shall introduce a function mapping a Pierce sequence to a real number in $I$ by means of the formula modelled on \eqref{series expansion}. Define a map $\varphi \colon \Sigma \to I$ by
\begin{align*} 
\varphi ( \sigma ) \coloneqq \sum_{n=1}^\infty \frac{(-1)^{n+1}}{\sigma_1 \dotsm \sigma_n} =  \frac{1}{\sigma_1} - \frac{1}{\sigma_1 \sigma_2} +  \dotsb + \frac{(-1)^{n+1}}{\sigma_1 \dotsm \sigma_n} + \dotsb
\end{align*}
for each $\sigma \coloneqq (\sigma_n)_{n \in \mathbb{N}} \in \Sigma$. Observe that $\varphi$ is well defined since $\sum_{n=1}^\infty 1/(\sigma_1 \dotsm \sigma_n) \leq \sum_{n=1}^\infty 1/n! < \infty$, where the first inequality follows from \eqref{sigma n bound}.

We rephrase \cite[Proposition~2.1]{Fan15} in terms of the maps $f$ and $\varphi$ in the following proposition. According to Fang \cite{Fan15}, the proposition is credited to Remez \cite{Rem54}. See also \cite[Section~4.1]{DDF22}.

Let $E \coloneqq I \cap \mathbb{Q}$ and $E' \coloneqq E \setminus \{ 0, 1 \} = (0,1) \cap \mathbb{Q}$.

\begin{proposition}[{\cite[Proposition~2.1]{Fan15}}] \label{inverse image of phi}
Let $x \in I$. Then the following hold.
\begin{enumerate}[label=(\roman*)]
\item
If $x \in E'$, then we have $\varphi^{-1}( x ) = \{ \sigma, \sigma' \}$, where
\begin{align*}
\sigma &\coloneqq (d_1(x), d_2(x), \dotsc, d_{n-1}(x), d_n(x)) = f(x) \in \Sigma_n \cap \Sigma_{\realizable}, \\
\sigma' &\coloneqq (d_1(x), d_2(x), \dotsc, d_{n-1}(x), d_n(x)-1, d_n(x)) \in \Sigma_{n+1} \cap (\Sigma \setminus \Sigma_{\realizable}),
\end{align*}
for some $n \in \mathbb{N}$. 
\item
If $x \in I \setminus E'$, then we have $\varphi^{-1}( x ) = \{ \sigma \}$, where $\sigma \coloneqq f(x)$. More precisely, $\sigma \in \Sigma_\infty$ if $x \in I \setminus E$; $\sigma = (\infty, \infty, \dotsc) \in \Sigma_0$ if $x = 0$; $\sigma = ( 1, \infty, \infty, \dotsc ) \in \Sigma_1$ if $x = 1$.
\end{enumerate}
\end{proposition}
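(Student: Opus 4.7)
The plan rests on three ingredients. First, formula \eqref{series expansion} is precisely the identity $\varphi \circ f = \mathrm{id}_I$, so $f(x) \in \varphi^{-1}(x)$ for every $x \in I$; the task is to locate any additional preimages. Second, for $\sigma \in \Sigma$ with $\sigma_1 \in \mathbb{N}$, writing $\bar\sigma \coloneqq (\sigma_2, \sigma_3, \dotsc) \in \Sigma$ for the left shift, factoring $1/\sigma_1$ out of the defining series yields the \emph{shift identity} $\sigma_1 \varphi(\sigma) = 1 - \varphi(\bar\sigma)$. Third, the following \emph{localization estimate}: if $\sigma_1 = m \in \mathbb{N}$, then $\varphi(\sigma) \in [1/(m+1), 1/m]$, with $\varphi(\sigma) = 1/m$ iff $\sigma_2 = \infty$, and $\varphi(\sigma) = 1/(m+1)$ iff $(\sigma_2, \sigma_3) = (m+1, \infty)$. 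This follows from the alternating-series estimate applied to the partial sums of $\varphi(\sigma)$, whose term magnitudes $1/(\sigma_1 \dotsm \sigma_k)$ are strictly decreasing in $k$ (when $\sigma_k < \infty$) by \eqref{sigma n bound}.

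With these tools, I would dispose of the subcases. For $x = 0$, the localization estimate forces $\sigma_1 = \infty$, whence $\sigma = (\infty, \infty, \dotsc)$. For $x = 1$, it forces $\sigma_1 = 1$, and the shift identity then gives $\varphi(\bar\sigma) = 0$, reducing to the previous subcase. For irrational $x$, since $x$ is not of the form $1/m$ for any $m \in \mathbb{N}$, the endpoint cases of the localization estimate are excluded, so $\sigma_1 = \lfloor 1/x \rfloor = d_1(x)$ is pinned down uniquely; the shift identity then gives $\varphi(\bar\sigma) = 1 - d_1(x) x = T(x)$, which is again irrational, and iterating the same reasoning yields $\sigma_k = d_k(x)$ for every $k \in \mathbb{N}$, so $\sigma = f(x)$.

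The rational case $x \in E'$ with Pierce expansion $[d_1, \dotsc, d_n]_P$ requires more work. I would first exhibit the second preimage $\sigma' \coloneqq (d_1, \dotsc, d_{n-1}, d_n - 1, d_n)$ and verify: (a) $\sigma' \in \Sigma_{n+1}$, using Proposition \ref{digit condition proposition} for the strict increase $d_{n-1} + 1 < d_n - 1$ when $n \geq 2$, and $d_1 \geq 2$ (from $x < 1$) when $n = 1$; (b) $\varphi(\sigma') = x$, via the telescoping $\frac{(-1)^{n+1}}{d_1 \dotsm d_{n-1}(d_n-1)} \bigl( 1 - \tfrac{1}{d_n} \bigr) = \frac{(-1)^{n+1}}{d_1 \dotsm d_n}$, which collapses the last two terms of $\varphi(\sigma')$ into the last term of $\varphi(\sigma)$; (c) $\sigma' \notin \Sigma_{\realizable}$, for otherwise Proposition \ref{digit condition proposition} applied to $\sigma'$ would force $d_n > (d_n - 1) + 1$, absurd. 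For the reverse inclusion $\varphi^{-1}(x) \subseteq \{\sigma, \sigma'\}$, I would induct on $n$: when $n \geq 2$, the localization estimate together with Proposition \ref{digit condition proposition} excludes $\tau_1 = d_1 - 1$, pinning $\tau_1 = d_1$, so the shift identity reduces the problem to the length-$(n-1)$ expansion $[d_2, \dotsc, d_n]_P$; when $n = 1$ (so $x = 1/d_1$), the localization estimate allows exactly $\tau_1 \in \{d_1, d_1 - 1\}$, and the endpoint characterization pins down the tail in each case. The main obstacle will be the precise boundary analysis in the localization estimate, since it is exactly the attainment of $1/m$ and $1/(m+1)$ at these endpoints that produces the double preimage over rationals; the rest is bookkeeping with the shift identity.
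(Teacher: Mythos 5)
The paper does not actually prove this proposition; it cites it from Fang (and, via Fang, from Remez), so there is no in-paper argument to compare against. Your self-contained proof is a genuinely useful supplement and is essentially correct. The three ingredients you isolate (the identity $\varphi \circ f = \id_I$ from \eqref{series expansion}, the shift identity $\sigma_1 \varphi(\sigma) = 1 - \varphi(\bar\sigma)$, and the localization $\varphi(\sigma) \in [1/(\sigma_1+1), 1/\sigma_1]$ with a precise characterization of when the endpoints are attained) do organize the whole argument cleanly, and the induction on the Pierce length of a rational $x$ correctly produces exactly the two preimages $\sigma$ and $\sigma'$. The telescoping verification in (b) and the non-realizability argument in (c) via Proposition~\ref{digit condition proposition} are both right.

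Two small points are worth tightening. First, in the inductive step for $n \geq 2$ you cite Proposition~\ref{digit condition proposition} to exclude $\tau_1 = d_1(x) - 1$, but what actually closes that case is the strict inequality $x < 1/d_1(x)$, which holds because $n \geq 2$ forces $T(x) \neq 0$ and hence $x = (1 - T(x))/d_1(x) < 1/d_1(x)$; the localization then shows $\tau_1 = d_1(x) - 1$ would require $\varphi(\tau) = 1/d_1(x) > x$, a contradiction. Proposition~\ref{digit condition proposition} is not the relevant tool here. Second, to keep the induction well-founded you should note explicitly that when $n \geq 2$ the shifted point $T(x) = [d_2(x), \dotsc, d_n(x)]_P$ again lies in $E'$ (it is rational, strictly positive because $d_2(x) < \infty$, and strictly less than $1$ because $d_2(x) \geq 2$), so the inductive hypothesis applies. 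Neither issue affects the correctness of the overall strategy.
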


The following proposition is a characterization of the set of realizable Pierce sequences.

\begin{proposition} \label{characterization of Sigma re}
Let $\sigma \coloneqq (\sigma_k)_{k \in \mathbb{N}} \in \Sigma$. Then $\sigma \not \in \Sigma_{\realizable}$ if and only if $\sigma \in \Sigma_n$ for some $n \geq 2$ with $\sigma_{n-1} + 1 = \sigma_n$.
\end{proposition}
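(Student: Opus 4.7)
I would tackle the two implications separately. The reverse implication is immediate from Proposition \ref{digit condition proposition}: if $\sigma \in \Sigma_n$ with $n \geq 2$ and $\sigma_{n-1} + 1 = \sigma_n$, then any $x \in I$ with $f(x) = \sigma$ would have a finite Pierce expansion of length exactly $n \geq 2$ whose last two digits violate the strict gap $d_{n-1}(x) + 1 < d_n(x)$ guaranteed by that proposition, so no such $x$ can exist and $\sigma \notin \Sigma_{\realizable}$.

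For the forward implication I would prove the contrapositive: $f(\varphi(\sigma)) = \sigma$ for every $\sigma \in \Sigma$ not of the prohibited form. The natural approach is induction built on two ingredients, the first-digit identity $d_1(\varphi(\sigma)) = \sigma_1$ and the shift identity $T(\varphi(\sigma)) = \varphi(\sigma^{(2)})$, where $\sigma^{(k)} \coloneqq (\sigma_k, \sigma_{k+1}, \dotsc) \in \Sigma$ denotes the $k$-th shift. Once $d_1(\varphi(\sigma)) = \sigma_1$ is in hand, the shift identity reduces to the purely algebraic computation $1 - \sigma_1 \varphi(\sigma) = \varphi(\sigma^{(2)})$, a termwise regrouping of the alternating series defining $\varphi$. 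I would also observe at the outset that being non-prohibited is preserved under shifts: the last two finite entries of $\sigma^{(k)}$ (when there are two) coincide with those of $\sigma$, so if $\sigma$ has a gap $>1$ between them, so does every shift.

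The main obstacle is therefore the base step $d_1(\varphi(\sigma)) = \sigma_1$. The case $\sigma_1 = \infty$ is trivial since $\varphi(\sigma) = 0$; for $\sigma_1 \in \mathbb{N}$ I need $1/(\sigma_1 + 1) < \varphi(\sigma) \leq 1/\sigma_1$. The upper bound is the standard alternating-series truncation, and the lower bound comes from the chain
\begin{align*}
\varphi(\sigma) \;\geq\; \frac{1}{\sigma_1} - \frac{1}{\sigma_1 \sigma_2} \;=\; \frac{\sigma_2 - 1}{\sigma_1 \sigma_2} \;\geq\; \frac{1}{\sigma_1 + 1},
\end{align*}
where the last inequality uses $\sigma_2 \geq \sigma_1 + 1$ from \eqref{sigma n bound}. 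The delicate point is to secure strict inequality somewhere along this chain: the first $\geq$ is an equality precisely when $\sigma_3 = \infty$, and the second precisely when $\sigma_2 = \sigma_1 + 1$, and their simultaneous occurrence is exactly $\sigma \in \Sigma_2$ with $\sigma_2 = \sigma_1 + 1$, the excluded configuration at $n = 2$. Under the non-prohibited hypothesis at least one inequality is strict, giving $\varphi(\sigma) > 1/(\sigma_1 + 1)$. Applying this base step to each shift $\sigma^{(n)}$, which is itself non-prohibited by the observation above, and iterating the shift identity then yields $d_n(\varphi(\sigma)) = d_1(\varphi(\sigma^{(n)})) = \sigma_n$ for every $n \geq 1$, completing the induction and showing $\sigma \in \Sigma_{\realizable}$.
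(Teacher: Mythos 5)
Your proof is correct, and it takes a genuinely different route from the paper on the forward implication. The paper's proof is short because it leans on Proposition~\ref{inverse image of phi} (the structure theorem for $\varphi^{-1}(x)$, cited from Fang/Remez): setting $x = \varphi(\sigma)$, it observes that $\sigma \neq f(x)$ forces $\varphi^{-1}(x)$ to be a doubleton, and then reads off the prohibited shape of $\sigma$ from the explicit description of the second element. Your contrapositive argument instead builds $f(\varphi(\sigma)) = \sigma$ from scratch: the first-digit identity $d_1(\varphi(\sigma)) = \sigma_1$ (with the careful case analysis of when each inequality in the chain $\varphi(\sigma) \geq (\sigma_2 - 1)/(\sigma_1\sigma_2) \geq 1/(\sigma_1+1)$ becomes an equality, tying the failure case exactly to the prohibited configuration at $n=2$), the shift identity $T(\varphi(\sigma)) = \varphi(\sigma^{(2)})$, and the observation that non-prohibitedness passes to shifts. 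What your approach buys is self-containment — it does not rely on the external Proposition~\ref{inverse image of phi} at all, and in fact essentially re-derives the realizability half of that proposition — at the cost of being longer and requiring careful bookkeeping about where strictness enters. One cosmetic caution: you overload $\sigma^{(k)}$ for the $k$-th shift, whereas the paper reserves $\sigma^{(n)}$ for the truncation $(\sigma_1, \dotsc, \sigma_n, \infty, \infty, \dotsc)$; in a final write-up you would want to pick a non-conflicting symbol for the shift operator.
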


\begin{proof}
The reverse implication follows from Proposition \ref{digit condition proposition}, since there does not exist $x \in I$ whose Pierce expansion is given by $[d_1(x), \dotsc, d_n(x)]_P$ with $d_{n-1}(x) = \sigma_{n-1}$ and $d_n(x) = \sigma_{n-1}+1$.

Now, for the forward implication, suppose $\sigma \in \Sigma \setminus \Sigma_{\realizable}$. Put $x \coloneqq \varphi (\sigma) \in I$. Then $\sigma \neq f(x)$ by definition of $\Sigma_{\realizable}$. The preimage $\varphi^{-1}(x)$ contains $\sigma$ and by Proposition \ref{inverse image of phi}, it is either a singleton or a doubleton. If $\varphi^{-1}(x) = \{ \sigma \}$, then by Proposition \ref{inverse image of phi}(ii), we have $\sigma = f(x)$, which is a contradiction. Hence, $\varphi^{-1}(x)$ is a doubleton and by Proposition \ref{inverse image of phi}(i), it follows that $\sigma \in \Sigma_{n}$ for some $n \geq 2$ with $\sigma_{n-1} = d_{n-1}(x)-1$ and $\sigma_n = d_{n-1}(x)$, in which case $\sigma_{n-1}+1 = \sigma_n$.
\end{proof}

For each $x \in I$ and $n \in \mathbb{N}$, define the $n$th {\em Pierce convergent} or {\em approximant}, $s_n \colon I \to \mathbb{R}$ by
\[
s_n (x) \coloneqq [d_1(x), \dotsc, d_n(x)]_P = \sum_{k=1}^n \frac{(-1)^{k+1}}{d_1 (x) \dotsm d_k(x)}.
\]
Then $s_n(x)$ is nothing but the $n$th partial sum of the Pierce expansion \eqref{series expansion}. Using \eqref{T n-1 formula} repeatedly, we find that
\begin{align} \label{simplified series}
x &= \frac{1}{d_1(x)} - \frac{Tx}{d_1(x)} \nonumber \\
&= \frac{1}{d_1(x)} - \frac{1}{d_1(x)} \left( \frac{1}{d_2(x)} - \frac{T^2x}{d_2(x)} \right) = \frac{1}{d_1(x)} - \frac{1}{d_1(x) d_2(x)} + \frac{T^2x}{d_1(x) d_2(x)} \nonumber \\
&= \dotsb \nonumber \\
&= \sum_{k=1}^n \frac{(-1)^{k+1}}{d_1(x) \dotsm d_k(x)} + \frac{(-1)^n T^nx}{d_1(x) \dotsm d_n(x)}
= s_n(x) + \frac{(-1)^n T^nx}{d_1(x) \dotsm d_n(x)}.
\end{align}

For every $x \in I$, we define 
\[
\mathcal{E} (x) \coloneqq \sum_{n=1}^\infty ( x - s_n (x) )
\]
and call $\mathcal{E} \colon I \to \mathbb{R}$ the {\em error-sum function of Pierce expansions} on $I$. 
Note that for any $x \in I$, by \eqref{simplified series}, Proposition \ref{basic facts}(ii), and boundedness of $T$, we have
\[
\left| x - s_n (x) \right| = \left| \frac{(-1)^n T^nx}{d_1(x) \dotsm d_n(x)} \right| \leq \frac{1}{n!} \to 0
\]
as $n \to \infty$, with $\sum_{n=1}^\infty 1/n! < \infty$. It follows that $\mathcal{E}(x)$ is well defined as an absolutely and uniformly convergent series (or as a series with finitely many non-zero terms if $T^nx=0$ for some $n \in \mathbb{N}$). 

Defining an error-sum function on $\Sigma$ is in order. For each $n \in \mathbb{N}$, define the $n$th partial sum $\varphi_n \colon \Sigma \to \mathbb{R}$ by
\[
\varphi_n(\sigma) \coloneqq \sum_{k=1}^n \frac{(-1)^{k+1}}{\sigma_1 \dotsm \sigma_k} = \frac{1}{\sigma_1} - \frac{1}{\sigma_1 \sigma_2} + \dotsb + \frac{(-1)^{n+1}}{\sigma_1 \sigma_2 \dotsm \sigma_n}
\]
for $\sigma \coloneqq (\sigma_k)_{k \in \mathbb{N}} \in \Sigma$.
For every $\sigma \in \Sigma$, we define
\begin{align} \label{definition of E star}
\mathcal{E}^* (\sigma) \coloneqq \sum_{n=1}^\infty ( \varphi (\sigma) - \varphi_n(\sigma) )
\end{align}
and call $\mathcal{E}^* \colon \Sigma \to \mathbb{R}$ the {\em error-sum function of Pierce sequences} on $\Sigma$. Notice that for any $\sigma \coloneqq (\sigma_n)_{n \in \mathbb{N}} \in \Sigma$, by \eqref{sigma n bound}, we have
\begin{align} \label{phi - phi n}
| \varphi (\sigma) - \varphi_n (\sigma) | 
&= \frac{1}{\sigma_1 \dotsm \sigma_{n}} \left| \frac{1}{\sigma_{n+1}} - \sum_{j=1}^\infty \left( \frac{1}{\sigma_{n+1} \dotsm \sigma_{n+2j}} - \frac{1}{\sigma_{n+1} \dotsm \sigma_{n+(2j+1)}} \right)  \right| \nonumber \\
&\leq \frac{1}{\sigma_1 \dotsm \sigma_{n+1}} \leq \frac{1}{(n+1)!} \to 0 
\end{align}
as $n \to \infty$, with $\sum_{n=1}^\infty 1/(n+1)! < \infty$. Hence, the series converges absolutely and uniformly on $\Sigma$, and it follows that $\mathcal{E}^*(\sigma)$ is well defined.

\section{Some basic properties of $\mathcal{E}(x)$} \label{Section 3}

This section is devoted to investigating some basic properties of the error-sum function of Pierce expansions $\mathcal{E} \colon I \to \mathbb{R}$. It will usually be done by the aid of the symbolic space $\Sigma$ and the error-sum function of Pierce sequences $\mathcal{E}^* \colon \Sigma \to \mathbb{R}$.

\subsection{Symbolic space $\Sigma$}

We equip $\mathbb{N}$ with the discrete topology and consider $(\mathbb{N}_\infty, \mathcal{T})$ as its one-point compactification, so that a subset in $(\mathbb{N}_\infty, \mathcal{T})$ is open if and only if it is either a subset of $\mathbb{N}$ or a set whose complement with respect to $\mathbb{N}_\infty$ is a finite set in $\mathbb{N}$. 

For a metric space $(X,d)$, we denote by $B_d(x;r)$ the $d$-open ball centered at $x \in X$ with radius $r>0$, i.e., $B_d(x;r) \coloneqq \{ y \in X : d(x,y) < r \}$.

\begin{lemma} \label{metric on N infinity}
Define $\rho : \mathbb{N}_\infty \times \mathbb{N}_\infty \to \mathbb{R}$ by 
\[
\rho (x, y) \coloneqq
\begin{dcases}
\frac{1}{x} + \frac{1}{y}, &\text{if } x \neq y; \\ 
0, &\text{if } x=y,
\end{dcases}
\]
for $x,y \in \mathbb{N}_\infty$. Then $\rho$ is a metric on $\mathbb{N}_\infty$ and induces $\mathcal{T}$.
\end{lemma}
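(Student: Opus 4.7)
The plan is a direct check of the metric axioms, followed by a comparison of the $\rho$-open balls with a natural basis for $\mathcal{T}$.

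For the metric axioms, nonnegativity and symmetry are immediate from the formula, given the convention $1/\infty = 0$. For positivity, the only way $1/x + 1/y = 0$ with $x \neq y$ could happen is if both $1/x = 0$ and $1/y = 0$, forcing $x = y = \infty$, a contradiction; so $\rho(x,y) = 0$ iff $x = y$. For the triangle inequality $\rho(x,z) \leq \rho(x,y) + \rho(y,z)$, I would case-split on $y$: if $y = x$ or $y = z$, the inequality is an equality; otherwise both $\rho(x,y)$ and $\rho(y,z)$ are defined by the nontrivial branch and the right-hand side equals $(1/x + 1/z) + 2/y$, which dominates $\rho(x,z) = 1/x + 1/z$ (or just $0$ when $x = z$).

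For the topological claim, I would exhibit the basic $\rho$-open balls and match them with the basis of $\mathcal{T}$ consisting of singletons $\{n\}$ for $n \in \mathbb{N}$ together with the cofinite neighborhoods of $\infty$. For $n \in \mathbb{N}$ and any $0 < r \leq 1/n$, every $m \in \mathbb{N}_\infty \setminus \{n\}$ satisfies $\rho(n,m) \geq 1/n \geq r$, so $B_\rho(n;r) = \{n\}$; this shows every singleton of $\mathbb{N}$ is $\rho$-open, and every such singleton lies in $\mathcal{T}$. For $\infty$ and any $r > 0$, one has $B_\rho(\infty;r) = \{\infty\} \cup \{n \in \mathbb{N} : n > 1/r\}$, which is precisely a typical neighborhood of $\infty$ in $\mathcal{T}$. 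Since these balls form a basis for the $\rho$-topology and simultaneously a basis for $\mathcal{T}$, the two topologies coincide.

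The argument is almost entirely bookkeeping; the only mild subtlety is handling the convention $1/\infty = 0$ consistently in the positivity and triangle-inequality checks, and in recognizing that $B_\rho(n;r)$ collapses to $\{n\}$ because the smallest possible distance from $n$ to any other point of $\mathbb{N}_\infty$ is exactly $1/n$ (attained when the other point is $\infty$). I do not anticipate any genuine obstacle.
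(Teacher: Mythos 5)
Your proof is correct and follows essentially the same idea as the paper's: the paper declares the metric axioms "straightforward" and then checks the topology by two-sided inclusion using exactly the two observations you isolate, namely that $B_\rho(n;r)$ collapses to $\{n\}$ for small $r$ and that $B_\rho(\infty;r)$ is a cofinite neighborhood of $\infty$. You additionally spell out the triangle inequality, which the paper omits, and you phrase the topological comparison as exhibiting a common basis rather than proving two containments, but these are cosmetic differences rather than a genuinely different route.
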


\begin{proof}
It is straightforward to check that $\rho$ is a metric on $\mathbb{N}_\infty$, so we prove the second assertion only.

Let $O \in \mathcal{T}$. We show that $O$ is $\rho$-open. Suppose $x \in O$. Then either $x \in \mathbb{N}$ or $x = \infty$. If $x \in \mathbb{N}$, then $x \in \{ x \} = B_{\rho} ( x ; 1/(2x) ) \subseteq O$, and hence $O$ is a neighborhood of $x$ in the $\rho$-topology. Now assume $x = \infty$. Then $\mathbb{N}_\infty \setminus O \subseteq \mathbb{N}$ is finite. So we can find a $K \in \mathbb{N}$ such that every $n \geq K$ is in $O$. Then $x \in \{ \infty \} \cup \{ K+1, K+2, K+3, \dotsc \} = B_\rho ( \infty; 1/K ) \subseteq O$. Thus, $O$ is a neighborhood of $x$ in the $\rho$-topology. Since $x \in O$ is arbitrary, we conclude that $O$ is $\rho$-open.

Conversely, let $U \subseteq \mathbb{N}_\infty$ be a $\rho$-open set. Suppose $x \in U$. Then either $x \in \mathbb{N}$ or $x = \infty$. Assume first $x \in \mathbb{N}$. But then $\{ x \}$, which is open in $(\mathbb{N}_\infty, \mathcal{T})$, satisfies $x \in \{ x \} \subseteq U$. Hence, $U$ is a neighborhood of $x$ in $(\mathbb{N}_\infty, \mathcal{T})$. Now assume $x = \infty$. Since $U$ is $\rho$-open, we can find an $r>0$ such that $B_{\rho} (\infty; r) \subseteq U$. Note that $y \in B_{\rho} (\infty; r)$ if and only if $y = \infty$ or $y > 1/r$. Then $B_\rho (\infty; r)$ is equal to $\{ \infty \} \cup \{ \lfloor 1/r \rfloor + 1, \lfloor 1/r \rfloor + 2, \lfloor 1/r \rfloor + 3, \dotsc \}$ which is an open set in $(\mathbb{N}_\infty, \mathcal{T})$ containing $x = \infty$. Thus $U$ is a neighborhood of $x$ in $(\mathbb{N}_\infty, \mathcal{T})$. Since $x \in U$ is arbitrary, it follows that $U$ is open in $(\mathbb{N}_\infty, \mathcal{T})$.
\end{proof}

Tychonoff's theorem tells us that $\mathbb{N}_\infty^\mathbb{N}$ is compact in the product topology, as a (countable) product of a compact space $(\mathbb{N}_\infty, \mathcal{T})$. It is easy to see that any non-empty open set in the product topology contains a non-Pierce sequence, so that $\Sigma$ is not open in $\mathbb{N}_\infty^\mathbb{N}$. However, $\Sigma$ is closed in the product topology.

\begin{lemma} \label{Sigma is closed}
The subspace $\Sigma$ is closed in the product space $\mathbb{N}_\infty^\mathbb{N}$, and so $\Sigma$ is compact in the product topology.
\end{lemma}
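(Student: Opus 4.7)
The plan is to prove that $\Sigma^c \coloneqq \mathbb{N}_\infty^\mathbb{N} \setminus \Sigma$ is open in the product topology; compactness of $\Sigma$ then follows at once, since $\Sigma$ is a closed subspace of the compact (by Tychonoff) space $\mathbb{N}_\infty^\mathbb{N}$. The first step is to repackage the defining union $\Sigma = \bigcup_{n \in \mathbb{N}_0} \Sigma_n \cup \Sigma_\infty$ as a pair of pointwise conditions: $\tau = (\tau_k)_{k \in \mathbb{N}} \in \mathbb{N}_\infty^\mathbb{N}$ lies in $\Sigma$ if and only if for every $k \in \mathbb{N}$, (i) $\tau_k = \infty$ forces $\tau_{k+1} = \infty$, and (ii) whenever $\tau_k, \tau_{k+1} \in \mathbb{N}$, one has $\tau_k < \tau_{k+1}$. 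This is a direct reading of the definitions of $\Sigma_0$, $\Sigma_n$ for $n \geq 1$, and $\Sigma_\infty$.

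Hence, for any $\tau \in \Sigma^c$, there is an index $k$ at which one of two local defects occurs: either (a) $\tau_k = \infty$ while $\tau_{k+1} = M$ for some $M \in \mathbb{N}$, or (b) $\tau_k, \tau_{k+1} \in \mathbb{N}$ with $\tau_k \geq \tau_{k+1}$. In case (a), Lemma \ref{metric on N infinity} yields that $\{ M+1, M+2, \dotsc \} \cup \{ \infty \}$ equals the $\rho$-ball $B_\rho(\infty; 1/M)$ and is therefore open in $(\mathbb{N}_\infty, \mathcal{T})$, while $\{ M \}$ is open as a singleton in $\mathbb{N}$; consequently the basic cylinder in $\mathbb{N}_\infty^\mathbb{N}$ whose $k$th coordinate lies in $B_\rho(\infty; 1/M)$ and whose $(k+1)$st coordinate equals $M$ is open, contains $\tau$, and lies entirely in $\Sigma^c$. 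Indeed, any $\sigma$ in it either has $\sigma_k = \infty$ (so condition (i) fails because $\sigma_{k+1} = M \in \mathbb{N}$) or has $\sigma_k \in \{ M+1, M+2, \dotsc \}$ (so condition (ii) fails because $\sigma_k > M = \sigma_{k+1}$). In case (b), the cylinder with $k$th coordinate $\{ \tau_k \}$ and $(k+1)$st coordinate $\{ \tau_{k+1} \}$ is open, contains $\tau$, and every element of it violates (ii) at index $k$.

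I do not expect a substantive obstacle, as the whole argument is a bookkeeping exercise once the pointwise characterization is in place. The one point that requires a little care is case (a): sequences in the chosen neighborhood that happen to have $\sigma_k = \infty$ fail to be Pierce sequences because of condition (i), not (ii), so both clauses of the characterization are genuinely needed.
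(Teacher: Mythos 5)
Your proposal is correct and takes essentially the same approach as the paper: both show $\mathbb{N}_\infty^\mathbb{N} \setminus \Sigma$ is open by exhibiting, around each non-Pierce sequence, a basic open cylinder that constrains two consecutive coordinates and stays inside the complement. The only difference is cosmetic: where you split into the two defect cases (a) $\tau_k = \infty$, $\tau_{k+1} \in \mathbb{N}$ and (b) $\tau_k \geq \tau_{k+1}$ both finite, the paper observes that both can be phrased at once as ``$\sigma_K \geq \sigma_{K+1} \neq \infty$'' and then uses the single cylinder whose $K$th slot is $\mathbb{N}_\infty \setminus \{1, \dotsc, M-1\}$ (note this contains $M$ itself, covering case (b)) and whose $(K+1)$st slot is $\{M\}$, dispatching both cases with one open set.
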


\begin{proof}
We show that $\mathbb{N}_\infty^\mathbb{N} \setminus \Sigma$ is open, i.e., every point of $\mathbb{N}_\infty^\mathbb{N} \setminus \Sigma$ is an interior point. Let $\sigma \coloneqq (\sigma_n)_{n \in \mathbb{N}} \in \mathbb{N}_\infty^\mathbb{N} \setminus \Sigma$. By definition of $\mathbb{N}_\infty^\mathbb{N} \setminus \Sigma$, we can find an index $K \in \mathbb{N}$ such that $\sigma_K \geq \sigma_{K+1} \neq \infty$, say $M \coloneqq \sigma_{K+1} \in \mathbb{N}$. Consider a subset $O \subseteq \mathbb{N}_\infty^\mathbb{N}$ given by
\[
O \coloneqq\mathbb{N}_\infty^{\{ 1, \dotsc, K-1 \}} \times (\mathbb{N}_\infty \setminus \{ 1, 2, \dotsc, M-1 \} ) \times \{ M \} \times \mathbb{N}_\infty^{\mathbb{N} \setminus \{ 1, 2, \dotsc, K+1 \}}.
\]
Then $O$ is open in $\mathbb{N}_\infty^\mathbb{N}$ by definition of the product topology, since $\mathbb{N}_\infty \setminus \{ 1, 2, \dotsc, M-1 \}$ and $\{ M \}$ are open in $(\mathbb{N}_\infty, \mathcal{T})$. It is clear that $\sigma \in O \subseteq \mathbb{N}_\infty^\mathbb{N} \setminus \Sigma$. Thus $\sigma$ is an interior point of $\mathbb{N}_\infty^\mathbb{N} \setminus \Sigma$, and this proves that $\Sigma$ is closed in $\mathbb{N}_\infty^\mathbb{N}$.

The second assertion follows immediately since $\mathbb{N}_\infty^\mathbb{N}$ is compact in the product topology by Tychonoff's theorem, so that its closed subspace $\Sigma$ is compact.
\end{proof}

Let $\sigma \coloneqq (\sigma_n)_{n \in \mathbb{N}}$ and $\tau \coloneqq (\tau_n)_{n \in \mathbb{N}}$ be any two elements in $\mathbb{N}_\infty^\mathbb{N}$. We define $\rho^\mathbb{N} \colon \mathbb{N}_\infty^\mathbb{N} \times \mathbb{N}_\infty^\mathbb{N} \to \mathbb{R}$ by
\[
\rho^\mathbb{N} (\sigma, \tau) \coloneqq 
\sum_{n=1}^\infty \frac{\rho (\sigma_n, \tau_n)}{n!},
\]
where $\rho \colon \mathbb{N}_\infty \times \mathbb{N}_\infty \to \mathbb{R}$ is defined as in Lemma \ref{metric on N infinity}. Notice that we have $\rho (\sigma_n, \tau_n) \leq 1/\sigma_n + 1/\tau_n \leq 1/n + 1/n \leq 2$ for each $n \in \mathbb{N}$ by \eqref{sigma n bound}, and $\sum_{n=1}^\infty 2 / n! < \infty$. We deduce that $\rho^\mathbb{N}$ is well defined.

\begin{lemma} \label{metric on Sigma}
The function $\rho^\mathbb{N}$ is a metric on $\mathbb{N}_\infty^\mathbb{N}$ and the topology induced by $\rho^\mathbb{N}$ is equivalent to the product topology on $\mathbb{N}_\infty^\mathbb{N}$.
\end{lemma}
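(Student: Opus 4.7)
The plan is first to verify the three metric axioms for $\rho^\mathbb{N}$ by reducing them to the coordinatewise facts supplied by Lemma \ref{metric on N infinity}, and then to establish the topological equivalence by showing the identity map is a homeomorphism between the two topologies on $\mathbb{N}_\infty^\mathbb{N}$.

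For the metric axioms, non-negativity is immediate since $\rho \geq 0$ and the weights $1/n!$ are positive, and symmetry follows from the symmetry of $\rho$. The identity of indiscernibles is clear: $\rho^\mathbb{N}(\sigma,\tau) = 0$ forces every term $\rho(\sigma_n,\tau_n)/n!$ to vanish, hence $\sigma_n=\tau_n$ for every $n$. For the triangle inequality, I apply the triangle inequality of $\rho$ termwise and sum against the positive weights $1/n!$; absolute convergence is guaranteed by the uniform bound $\rho(\sigma_n,\tau_n) \leq 2$ recorded in the paragraph preceding the statement.

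For the topological equivalence, I would prove mutual inclusion with the standard head/tail estimate. To show every $\rho^\mathbb{N}$-ball $B_{\rho^\mathbb{N}}(\sigma,r)$ is a product-open neighborhood of $\sigma$, choose $N \in \mathbb{N}$ with $\sum_{n>N} 2/n! < r/2$, and for each $n \leq N$ use Lemma \ref{metric on N infinity} to pick a $\mathcal{T}$-open neighborhood $U_n$ of $\sigma_n$ so small that $\tau_n \in U_n$ implies $\rho(\sigma_n,\tau_n) < r/(2N)$. For $\tau$ in the basic open cylinder $U_1 \times \cdots \times U_N \times \mathbb{N}_\infty \times \mathbb{N}_\infty \times \cdots$,
\[
\rho^\mathbb{N}(\sigma,\tau) \leq \sum_{n=1}^N \frac{\rho(\sigma_n,\tau_n)}{n!} + \sum_{n>N} \frac{2}{n!} < \frac{r}{2} + \frac{r}{2} = r.
\]
Conversely, every sub-basic product-open set of the form $V = \prod_{n} V_n$ with $V_k$ open in $(\mathbb{N}_\infty,\mathcal{T})$ at a single coordinate $k$ and $V_n = \mathbb{N}_\infty$ otherwise is $\rho^\mathbb{N}$-open: given $\sigma \in V$, pick $r>0$ with $B_\rho(\sigma_k,r) \subseteq V_k$, and note that $\rho^\mathbb{N}(\sigma,\tau) < r/k!$ yields $\rho(\sigma_k,\tau_k) \leq k! \cdot \rho^\mathbb{N}(\sigma,\tau) < r$, whence $\tau \in V$.

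No single step presents a serious obstacle; this is the standard weighted-sum construction of a metric on a countable product of metric spaces. The only care required is choosing $N$ and the neighborhoods $U_n$ so as to cleanly separate the finite head of the series from the uniformly small tail, which is handled by the factorial weights.
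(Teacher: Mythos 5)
Your proof is correct, and it is precisely the argument the paper has in mind: the paper dismisses the first assertion as ``straightforward to check'' and the second as ``almost identical to the standard proof of the well-known fact that any countable product of metric spaces is metrizable,'' omitting all details. You have supplied exactly those details --- the termwise verification of the metric axioms, the head/tail split with $\sum_{n>N} 2/n! < r/2$ for one containment, and the coordinate projection estimate $\rho(\sigma_k,\tau_k) \leq k!\,\rho^{\mathbb{N}}(\sigma,\tau)$ for the other --- so there is no divergence from the paper's intended route, only a full expansion of it.

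One small remark worth making explicit: in the inequality $\sum_{n=1}^{N} \rho(\sigma_n,\tau_n)/n! < r/2$ you are implicitly using $1/n! \leq 1$ so that the head sum is bounded by $N \cdot r/(2N) = r/2$; stating this makes the estimate airtight. Also note that while the paper's preceding paragraph invokes \eqref{sigma n bound} (valid only on $\Sigma$) to bound $\rho(\sigma_n,\tau_n) \leq 2$, your observation that this bound already holds on all of $\mathbb{N}_\infty^{\mathbb{N}}$ --- simply because $1/x \leq 1$ for every $x \in \mathbb{N}_\infty$ --- is the correct justification, since the lemma concerns the full product space $\mathbb{N}_\infty^{\mathbb{N}}$ rather than $\Sigma$.
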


\begin{proof}
It is straightforward to check that $(\mathbb{N}_\infty^\mathbb{N},\rho^\mathbb{N})$ is a metric space. The proof of the second assertion is almost identical to the standard proof of the well-known fact that any countable product of metric spaces is metrizable. So we omit the details.
\end{proof}

\begin{lemma} \label{Sigma is compact}
The metric space $(\Sigma, \rho^\mathbb{N})$ is compact.
\end{lemma}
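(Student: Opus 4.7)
The plan is to combine Lemma \ref{Sigma is closed} and Lemma \ref{metric on Sigma} in essentially a one-line argument. By Lemma \ref{metric on Sigma}, the topology on $\mathbb{N}_\infty^\mathbb{N}$ induced by the metric $\rho^\mathbb{N}$ coincides with the product topology. Consequently, the subspace topology that $\Sigma$ inherits from $(\mathbb{N}_\infty^\mathbb{N},\rho^\mathbb{N})$ agrees with the subspace topology it inherits from the product topology, because subspace topologies are preserved under passing to an equivalent ambient topology.

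Next, by Lemma \ref{Sigma is closed}, $\Sigma$ is compact as a subspace of $\mathbb{N}_\infty^\mathbb{N}$ endowed with the product topology. Since compactness is a topological invariant and the two subspace topologies on $\Sigma$ coincide, $\Sigma$ is also compact as a subspace of $(\mathbb{N}_\infty^\mathbb{N},\rho^\mathbb{N})$. The restriction of $\rho^\mathbb{N}$ to $\Sigma\times\Sigma$ is precisely the metric of the metric subspace $(\Sigma,\rho^\mathbb{N})$, so this yields the compactness of $(\Sigma,\rho^\mathbb{N})$.

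I do not expect any real obstacle here: the nontrivial content has already been carried out in Lemmas \ref{Sigma is closed} and \ref{metric on Sigma}. The only thing to take care of is phrasing the standard fact that equivalent topologies on an ambient space induce equivalent subspace topologies, so that compactness transfers. If desired, one can spell this out by noting that the identity map $(\Sigma,\text{product})\to(\Sigma,\rho^\mathbb{N})$ is a continuous bijection from a compact space, hence a homeomorphism; this again yields compactness of $(\Sigma,\rho^\mathbb{N})$.
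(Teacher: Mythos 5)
Your argument is correct and is exactly the paper's approach: the paper's proof is the one-line remark that the lemma is immediate from Lemmas \ref{Sigma is closed} and \ref{metric on Sigma}. You have simply spelled out the standard topological bookkeeping that the paper leaves implicit.
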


\begin{proof}
The lemma is immediate from Lemmas \ref{Sigma is closed} and \ref{metric on Sigma}.
\end{proof}

For a given $\sigma \coloneqq (\sigma_k)_{k \in \mathbb{N}} \in \Sigma$, we define $\sigma^{(n)} \coloneqq (\tau_k)_{k \in \mathbb{N}}\in \Sigma$ for each $n \in \mathbb{N}$, by 
\[
\tau_k
\coloneqq 
\begin{cases}
\sigma_k, &\text{if } 1 \leq k \leq n; \\
\infty, &\text{otherwise},
\end{cases}
\]
i.e., $\sigma^{(n)} = (\sigma_1, \dotsc, \sigma_n, \infty, \infty, \dots)$. It is worth pointing out that it is not always the case that $\sigma^{(n)} \in \Sigma_n$, since we might have $\sigma_k = \infty$ for some $1 \leq k \leq n$. 

Fix $n \in \mathbb{N}$ and $\sigma \in \Sigma_n$. Let $\Upsilon_\sigma$ be the collection of sequences in $\Sigma$ defined as
\[
\Upsilon_\sigma \coloneqq \{ \upsilon \in \Sigma : \upsilon^{(n)} = \sigma \},
\]
and we call $\Upsilon_\sigma$ the {\em cylinder set} of order $n$ associated with $\sigma$. Then $\Upsilon_\sigma$ consists of all sequences in $\Sigma$ whose initial $n$ terms agree with those of $\sigma$. By Lemma \ref{Sigma is closed}, it is clear that $\Upsilon_\sigma$ is compact in $\Sigma$ as a closed set in a compact space. Since $\Upsilon_\sigma$ is open in $\Sigma$ as well, it follows that $\Sigma \setminus \Upsilon_\sigma$ is compact by the same lemma. We also define the {\em fundamental interval} of order $n$ associated with $\sigma \coloneqq (\sigma_k)_{k \in \mathbb{N}}$ by
\begin{align*} 
I_\sigma \coloneqq \{ x \in I : d_k(x) = \sigma_k \text{ for all } 1 \leq k \leq n \} = f^{-1} (\Upsilon_\sigma).
\end{align*}
Then any number $x \in I_\sigma$ has its Pierce expansion beginning with $(\sigma_k)_{k=1}^n$, i.e.,
\begin{align*}
x 
&= [d_1(x), \dotsc, d_n(x), d_{n+1}(x), d_{n+2}(x), \dots]_P \\
&= [\sigma_1, \dotsc, \sigma_n, d_{n+1}(x), d_{n+2}(x), \dotsc]_P.
\end{align*}
In view of the following proposition, the reason for $I_\sigma$ being called an interval should be clear.

For each $n \in \mathbb{N}$ and $\sigma \coloneqq (\sigma_k)_{k \in \mathbb{N}} \in \Sigma_n$, we write $\widehat{\sigma} \coloneqq (\widehat{\sigma}_k)_{k \in \mathbb{N}} \in \Sigma_n$, where the $\widehat{\sigma}_k$ are given by
\[
\widehat{\sigma}_k =
\begin{cases}
\sigma_n+1, &\text{if } k = n; \\
\sigma_k, &\text{otherwise},
\end{cases}
\]
i.e., $\widehat{\sigma} = (\sigma_1, \dotsc, \sigma_{n-1}, \sigma_n+1, \infty, \infty, \dotsc)$. 

\begin{proposition} [{\cite[Theorem~1]{Sha86}}] \label{I sigma}
Let $n \in \mathbb{N}$ and $\sigma \coloneqq (\sigma_k)_{k \in \mathbb{N}} \in \Sigma_n$. If $\sigma \in \Sigma_{\realizable}$, then
\begin{align} \label{fundamental interval 1}
I_\sigma = \begin{cases} ( \varphi (\widehat{\sigma}), \varphi (\sigma)], &\text{if $n$ is odd}; \\ [ \varphi (\sigma), \varphi (\widehat{\sigma})), &\text{if $n$ is even}. \end{cases}
\end{align}
If $\sigma \not \in \Sigma_{\realizable}$, we have instead that $I_\sigma$ is an open interval with the same endpoints, i.e., 
\begin{align*} \label{fundamental interval 2}
I_\sigma = \begin{cases} ( \varphi (\widehat{\sigma}), \varphi (\sigma)), &\text{if $n$ is odd}; \\ ( \varphi (\sigma), \varphi (\widehat{\sigma})), &\text{if $n$ is even}. \end{cases} \tag{\ref{fundamental interval 1}$'$}
\end{align*}
Consequently, the length of $I_\sigma$ is
\begin{align} \label{length of I sigma}
\lambda (I_\sigma) = | \varphi (\sigma) - \varphi (\widehat{\sigma})| = \frac{1}{\sigma_1 \dotsm \sigma_{n-1} \sigma_n (\sigma_n+1)}.
\end{align}
\end{proposition}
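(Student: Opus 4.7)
The plan is to exploit the identity
\[
x = \varphi(\sigma) + \frac{(-1)^n T^n x}{\sigma_1 \dotsm \sigma_n},
\]
which follows from \eqref{simplified series} upon noting that for $x \in I_\sigma$ one has $s_n(x) = \varphi_n(\sigma) = \varphi(\sigma)$. Thus $I_\sigma$ is the image of the range of $T^n |_{I_\sigma}$ under an affine map, and its endpoints can be read off once that range is identified. A direct expansion of partial sums (using that $\sigma$ and $\widehat{\sigma}$ agree on the first $n-1$ coordinates) gives
\[
\varphi(\sigma) - \varphi(\widehat{\sigma}) = \frac{(-1)^{n+1}}{\sigma_1 \dotsm \sigma_n} - \frac{(-1)^{n+1}}{\sigma_1 \dotsm \sigma_{n-1}(\sigma_n+1)} = \frac{(-1)^{n+1}}{\sigma_1 \dotsm \sigma_{n-1}\sigma_n(\sigma_n+1)},
\]
which already yields \eqref{length of I sigma} and fixes the ordering of $\varphi(\sigma)$ and $\varphi(\widehat{\sigma})$ according to the parity of $n$.

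For the forward inclusion, fix $x \in I_\sigma$. Proposition \ref{basic facts}(i) gives $d_{n+1}(x) \geq \sigma_n+1$ and hence $T^n x \in [0, 1/(\sigma_n+1)]$. The endpoint $T^n x = 1/(\sigma_n+1)$ is excluded, because it would force $d_{n+1}(x) = \sigma_n+1$ and $T^{n+1} x = 0$, yielding a Pierce expansion of length $n+1$ with $d_n(x)+1 = d_{n+1}(x)$, contrary to Proposition \ref{digit condition proposition}. On the other hand, $T^n x = 0$ happens precisely when $x$ has a finite Pierce expansion of length exactly $n$ with digit sequence $\sigma$, which by Proposition \ref{characterization of Sigma re} is possible if and only if $\sigma \in \Sigma_{\realizable}$. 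Feeding this description of the range of $T^n x$ into the identity above and tracking the sign $(-1)^n$ places $I_\sigma$ inside the claimed interval with the correct endpoint inclusions.

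For the reverse inclusion, take $x$ in the claimed interval and set $t \coloneqq (-1)^n \sigma_1 \dotsm \sigma_n\,(x - \varphi(\sigma))$, so that $t \in [0, 1/(\sigma_n+1))$ with $t > 0$ whenever $\sigma \notin \Sigma_{\realizable}$. I would prove by induction on $n$ that $d_k(x) = \sigma_k$ for $1 \leq k \leq n$ and $T^n x = t$. The base $n=1$ is a direct check: $x = (1-t)/\sigma_1 \in (1/(\sigma_1+1), 1/\sigma_1]$, so $d_1(x) = \sigma_1$ and $Tx = t$. For the step, write $\sigma' \coloneqq (\sigma_1, \dotsc, \sigma_{n-1}) \in \Sigma_{n-1}$ and rearrange
\[
x = \varphi(\sigma') + \frac{(-1)^{n-1}\, s}{\sigma_1 \dotsm \sigma_{n-1}}, \qquad s \coloneqq \frac{1-t}{\sigma_n} \in \left( \frac{1}{\sigma_n+1},\, \frac{1}{\sigma_n} \right].
\]
Using $\sigma_n \geq \sigma_{n-1}+1$ one verifies that $x$ lies in the interval claimed for $\sigma'$; the inductive hypothesis then yields $d_k(x) = \sigma_k$ for $1 \leq k \leq n-1$ and $T^{n-1}x = s$, whence $d_n(x) = \lfloor 1/s \rfloor = \sigma_n$ and $T^n x = 1 - \sigma_n s = t$. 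The main subtlety, and the step I expect to demand the most care, is the borderline case $\sigma_n = \sigma_{n-1}+1$: here $\sigma \notin \Sigma_{\realizable}$ by Proposition \ref{characterization of Sigma re}, which forces $t>0$ and hence the strict bound $s < 1/\sigma_n = 1/(\sigma_{n-1}+1)$ that is needed to keep $x$ off the excluded endpoint of the interval for $\sigma'$; the other endpoint bookkeeping is handled in the same spirit.
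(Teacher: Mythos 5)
The paper itself gives no proof of this proposition --- it is cited verbatim from Shallit's Theorem 1 --- so there is no ``paper's own proof'' to compare against. What you have written is a correct and self-contained derivation from the paper's own definitions, and it is the natural one: you rewrite the telescoped identity \eqref{simplified series} as
\[
x = \varphi(\sigma) + \frac{(-1)^n T^n x}{\sigma_1 \dotsm \sigma_n}
\qquad (x \in I_\sigma),
\]
identify $I_\sigma$ as the image of the range of $T^n|_{I_\sigma}$ under this affine bijection, pin down that range as $[0, 1/(\sigma_n+1))$ (strict at $0$ exactly when $\sigma \notin \Sigma_{\realizable}$), and then verify the reverse inclusion by induction on $n$ to confirm the map is onto.

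The details all check. In the forward direction, the exclusion of $T^n x = 1/(\sigma_n+1)$ via Proposition \ref{digit condition proposition} is exactly the right invocation; the exclusion of $T^n x = 0$ when $\sigma \notin \Sigma_{\realizable}$ follows cleanly from Proposition \ref{characterization of Sigma re}. In the reverse direction, the rewriting $x = \varphi(\sigma') + (-1)^{n-1}s/(\sigma_1\dotsm\sigma_{n-1})$ with $s = (1-t)/\sigma_n \in (1/(\sigma_n+1), 1/\sigma_n]$, together with the induction hypothesis, gives $T^{n-1}x = s$ and hence $d_n(x) = \lfloor 1/s\rfloor = \sigma_n$, $T^n x = t$. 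You correctly flagged the one delicate point: when $\sigma_n = \sigma_{n-1}+1$, the needed strict bound $s < 1/(\sigma_{n-1}+1)$ is exactly what the non-realizability of $\sigma$ (hence $t > 0$) supplies. The length formula \eqref{length of I sigma} falls out of the same partial-sum computation. The argument is complete.
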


We illustrate the exclusion of the endpoint $\varphi (\sigma)$ in \eqref{fundamental interval 2} by an example. Consider two sequences $\sigma \coloneqq (2) \in \Sigma_1 \cap \Sigma_{\realizable}$ and $\sigma' \coloneqq (1,2) \in \Sigma_2 \cap (\Sigma \setminus \Sigma_{\realizable})$. Then $\varphi (\sigma) = 1/2$ and $\varphi (\sigma') = 1/1 - 1/(1 \cdot 2) = 1/2$ are equal, and so they have the same Pierce expansion, namely $[2]_P$. It follows by the definition of fundamental intervals that $I_{\sigma}$ contains $\varphi (\sigma)$, whereas $I_{\sigma'}$ fails to contain $\varphi (\sigma')$.

For later use, we record an upper bound for $\lambda (I_\sigma)$ derived from \eqref{length of I sigma}. For each $\sigma \coloneqq (\sigma_k)_{k \in \mathbb{N}} \in \Sigma_n$, since $\sigma_k \geq k$ for $1 \leq k \leq n$ by \eqref{sigma n bound}, we have that 
\begin{align} \label{bound for length of I sigma}
\lambda (I_\sigma) \leq \frac{1}{(1)(2) \dotsm (n-1)(n)(n+1)} = \frac{1}{(n+1)!}.
\end{align}

\subsection{Mappings $\varphi \colon \Sigma \to I$ and $f \colon I \to \Sigma$}

By definition, the following observation is immediate. 

\begin{observation}
We have $\varphi \circ f = \id_{I}$ and $f \circ (\varphi|_{\Sigma_{\realizable}}) = \id_{\Sigma_{\realizable}}$, where $\varphi|_{\Sigma_{\realizable}}$ is the restriction of $\varphi$ to $\Sigma_{\realizable}$, but $f \circ \varphi \neq \id_\Sigma$ in general.
\end{observation}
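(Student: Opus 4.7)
The observation is essentially an unpacking of definitions, so my plan is to handle each of the three assertions in turn and let the earlier results in Section 2 carry the load.

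For $\varphi \circ f = \id_I$, the key is just to compare definitions. Given $x \in I$, the map $f$ returns the digit sequence $(d_n(x))_{n \in \mathbb{N}}$, and the Pierce expansion identity \eqref{series expansion}, which the paper already cites as classical, gives
\[
x \;=\; \sum_{n=1}^\infty \frac{(-1)^{n+1}}{d_1(x) \dotsm d_n(x)}.
\]
The right-hand side is exactly $\varphi(f(x))$ by the definition of $\varphi$, so $(\varphi \circ f)(x) = x$. (Strictly, one should also mention the degenerate case $x=0$, where $f(x) = (\infty, \infty, \dotsc)$ and the convention $c/\infty = 0$ forces $\varphi(f(0)) = 0$, matching the convention used throughout.)

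For $f \circ (\varphi|_{\Sigma_{\realizable}}) = \id_{\Sigma_{\realizable}}$, I would use the definition of realizability directly. If $\sigma \in \Sigma_{\realizable}$, then by definition there exists $x \in I$ with $f(x) = \sigma$. Applying the first part gives $\varphi(\sigma) = \varphi(f(x)) = x$, and then $f(\varphi(\sigma)) = f(x) = \sigma$.

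For the inequality $f \circ \varphi \neq \id_\Sigma$, it suffices to exhibit one counterexample, and Proposition \ref{characterization of Sigma re} and the worked example right after Proposition \ref{I sigma} already hand it to us. Take $\sigma' \coloneqq (1,2) \in \Sigma_2 \setminus \Sigma_{\realizable}$. Then
\[
\varphi(\sigma') \;=\; \frac{1}{1} - \frac{1}{1 \cdot 2} \;=\; \frac{1}{2},
\]
and $f(1/2) = (2, \infty, \infty, \dotsc) \neq \sigma'$, since the digit sequence of $1/2$ has length $1$, not $2$. None of the steps is an obstacle; the only small trap is remembering to justify why $\sigma'$ is non-realizable (because $\sigma_1 + 1 = \sigma_2$ with $n=2$), which is a direct invocation of Proposition \ref{characterization of Sigma re}.
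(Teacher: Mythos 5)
Your proof is correct and fills in exactly the details the paper treats as obvious: the paper states this observation with no proof at all, remarking only that it is ``immediate by definition.'' Your use of \eqref{series expansion} for the first identity, the realizability definition plus the first identity for the second, and the non-realizable sequence $(1,2)$ (the same one the paper uses in its example after Proposition \ref{I sigma}) for the counterexample is precisely what is meant.
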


For a fixed $\sigma \in \Sigma_n$ for some $n \in \mathbb{N}$, we can explicitly describe the relation between the cylinder set $\Upsilon_\sigma \subseteq \Sigma$ and the fundamental interval $I_\sigma \subseteq I$ in terms of the map $f \colon I \to \Sigma$. We first observe the following from the definition $I_\sigma = f^{-1}(\Upsilon_\sigma)$.

\begin{observation}
Let $n \in \mathbb{N}$ and $\sigma \in \Sigma_n$. Then $f(I_\sigma) \subseteq \Upsilon_\sigma$.
\end{observation}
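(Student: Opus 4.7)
The plan is to unfold both definitions and observe that the inclusion is immediate. The set $I_\sigma$ is defined as $\{x \in I : d_k(x) = \sigma_k \text{ for } 1 \leq k \leq n\}$, and the cylinder set $\Upsilon_\sigma$ is $\{\upsilon \in \Sigma : \upsilon^{(n)} = \sigma\}$. Since $f(x) = (d_k(x))_{k \in \mathbb{N}}$, membership $x \in I_\sigma$ says exactly that the first $n$ coordinates of $f(x)$ agree with those of $\sigma$; padding the remaining coordinates with $\infty$ gives $(f(x))^{(n)} = \sigma$, which is the definition of $f(x) \in \Upsilon_\sigma$. Note that $f(x) \in \Sigma$ holds by Proposition \ref{basic facts}(i), so the expression $(f(x))^{(n)}$ makes sense.

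Equivalently, one can simply invoke the remark preceding the observation, namely the equality $I_\sigma = f^{-1}(\Upsilon_\sigma)$. For any map $g \colon A \to B$ and any subset $C \subseteq B$, the elementary set-theoretic identity $g(g^{-1}(C)) \subseteq C$ yields $f(I_\sigma) = f(f^{-1}(\Upsilon_\sigma)) \subseteq \Upsilon_\sigma$. Either formulation gives the conclusion in one line.

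There is essentially no obstacle here; the statement is a direct unpacking of definitions and the proof is a single sentence. The only minor point worth flagging is to confirm that $f$ indeed maps $I_\sigma$ into $\Sigma$ (not merely into $\mathbb{N}_\infty^{\mathbb{N}}$), which follows from Proposition \ref{basic facts}(i) guaranteeing that every digit sequence $(d_n(x))_{n \in \mathbb{N}}$ is a Pierce sequence. Since the reverse inclusion $\Upsilon_\sigma \subseteq f(I_\sigma)$ need not hold in general (unrealizable sequences in $\Upsilon_\sigma$ lie outside $f(I)$), the one-line proof gives the strongest statement available at this stage, with equality up to $\Sigma_{\realizable}$ presumably addressed in subsequent observations.
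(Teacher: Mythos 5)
Your second formulation is exactly the paper's argument: the paper records $I_\sigma = f^{-1}(\Upsilon_\sigma)$ as a definition and treats the observation as an immediate consequence of the set-theoretic fact $f(f^{-1}(\Upsilon_\sigma)) \subseteq \Upsilon_\sigma$. Your first formulation (unpacking the definitions of $I_\sigma$, $\Upsilon_\sigma$, and $f$) is an equivalent rephrasing, so the proposal matches the paper's approach.
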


The inclusion in the above observation is proper, i.e., $f(I_\sigma) \subsetneq \Upsilon_\sigma$, and by Proposition \ref{characterization of Sigma re} we explicitly have
\begin{align} \label{cylinder set minus f image}
\Upsilon_\sigma \setminus f(I_\sigma) = \bigcup_{m \geq n} \{ \tau \coloneqq (\tau_k)_{k \in \mathbb{N}}  \in \Sigma_{m} : \tau^{(n)} = \sigma \text{ and } \tau_{m-1}+1 = \tau_{m} \} \subseteq \Sigma \setminus f(I).
\end{align}
But $f(I_\sigma)$ is not significantly smaller than $\Upsilon_\sigma$ in the sense that $f (I_\sigma)$ is dense in $\Upsilon_\sigma$.

\begin{lemma} \label{f image of fundamental interval is dense}
Let $n \in \mathbb{N}$ and $\sigma \in \Sigma_n$. Then $\overline{f (I_\sigma)} = \Upsilon_\sigma$. 
\end{lemma}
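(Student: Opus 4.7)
The plan is to establish the equality by proving the two inclusions $\overline{f(I_\sigma)} \subseteq \Upsilon_\sigma$ and $\Upsilon_\sigma \subseteq \overline{f(I_\sigma)}$ separately. The first inclusion is immediate: the cylinder set $\Upsilon_\sigma$ is cut out by the finitely many coordinate conditions $\upsilon_k = \sigma_k$ for $1 \leq k \leq n$, and each of these is a closed condition in the product topology because the singleton $\{\sigma_k\}$ is closed (in fact, clopen) in $(\mathbb{N}_\infty, \mathcal{T})$. Since the preceding observation gives $f(I_\sigma) \subseteq \Upsilon_\sigma$, taking closures yields the inclusion.

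For the reverse inclusion, I take an arbitrary $\tau \in \Upsilon_\sigma$ and split into two cases. If $\tau \in \Sigma_{\realizable}$, then there exists $x \in I$ with $f(x) = \tau$; the first $n$ digits of $x$ agree with $\sigma$, so $x \in I_\sigma$ and $\tau \in f(I_\sigma)$. Otherwise, by \eqref{cylinder set minus f image} we have $\tau \in \Sigma_m$ for some $m \geq n$ (with $m \geq 2$) satisfying $\tau_{m-1} + 1 = \tau_m$ and $\tau^{(n)} = \sigma$. To approximate such a $\tau$, I extend it by one coordinate: for each integer $a > \tau_m + 1$, define
\[
\tau_{(a)} \coloneqq (\tau_1, \dotsc, \tau_m, a, \infty, \infty, \dotsc) \in \Sigma_{m+1}.
\]
Because the two final nontrivial entries $\tau_m$ and $a$ differ by at least $2$, Proposition \ref{characterization of Sigma re} guarantees $\tau_{(a)} \in \Sigma_{\realizable}$, and since $\tau_{(a)}^{(n)} = \tau^{(n)} = \sigma$ we obtain $\tau_{(a)} \in f(I_\sigma)$. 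A direct calculation with the metric from Lemma \ref{metric on Sigma} (the sequences differ only at coordinate $m+1$, where one value is $a$ and the other is $\infty$) gives
\[
\rho^\mathbb{N}(\tau_{(a)}, \tau) = \frac{\rho(a,\infty)}{(m+1)!} = \frac{1}{a \cdot (m+1)!} \longrightarrow 0 \quad \text{as } a \to \infty,
\]
so $\tau \in \overline{f(I_\sigma)}$.

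The main obstacle I expect is the non-realizable case: one must use \eqref{cylinder set minus f image} to pin down the precise structure of sequences in $\Upsilon_\sigma \setminus f(I_\sigma)$, and then invoke Proposition \ref{characterization of Sigma re} once more to ensure that the chosen one-coordinate extension $\tau_{(a)}$ is itself realizable—this is exactly the role of the constraint $a > \tau_m + 1$. Beyond this, the verification that the approximation converges in $\rho^\mathbb{N}$ is a short metric computation.
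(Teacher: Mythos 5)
Your proof is correct and takes essentially the same approach as the paper: you approximate a non-realizable $\tau \in \Upsilon_\sigma$ by appending a large final term $a > \tau_m + 1$, invoke Proposition~\ref{characterization of Sigma re} to confirm realizability, and let $a \to \infty$. The paper's proof uses the same construction (with $a = \upsilon_m + (k+1)$ indexed by $k \in \mathbb{N}$), differing only in that it leaves the closedness of $\Upsilon_\sigma$, the metric convergence, and the dispatch of the realizable case implicit, whereas you spell these steps out.
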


\begin{proof}
Put $\sigma \coloneqq (\sigma_k)_{k \in \mathbb{N}} \in \Sigma_n$. It is clear that $\Upsilon_\sigma$ is closed in $\Sigma$. Since $f(I_\sigma) \subseteq \Upsilon_\sigma$ and $\Upsilon_\sigma$ is closed, it suffices to show that any point in $\Upsilon_\sigma \setminus f(I_\sigma)$ is a limit point of $f(I_\sigma)$. Let $\upsilon \coloneqq (\upsilon_k)_{k \in \mathbb{N}} \in \Upsilon_\sigma \setminus f (I_\sigma)$. Then, by \eqref{cylinder set minus f image}, $\upsilon$ is of the form
\[
(\sigma_1, \dotsc, \sigma_{n-1}, \sigma_n, \dotsc, \upsilon_{m-1}, \upsilon_{m}, \infty, \infty, \dotsc),
\] 
where $\upsilon_{m-1} + 1 = \upsilon_m$, for some $m \geq n$. Consider a sequence $(\bm{\tau}_k)_{k \in \mathbb{N}}$ in $\Sigma$ given by
\[
\bm{\tau}_k \coloneqq (\sigma_1, \dotsc, \sigma_n, \dotsc, \upsilon_{m-1}, \upsilon_m, \upsilon_m + (k+1), \infty, \infty, \dotsc)
\]
for each $k \in \mathbb{N}$. Then $\bm{\tau}_k \in f(I_\sigma)$ for all $k \in \mathbb{N}$ by Proposition \ref{characterization of Sigma re}, since $\bm{\tau}_k \in \Sigma_{m+1}$ and $\upsilon_m + 1 < \upsilon_m + (k+1)$. Clearly, $\bm{\tau}_k \to \upsilon$ as $k \to \infty$. This completes the proof.
\end{proof}

Similarly, any sequence in $\Sigma$ can be approximated arbitrarily close by sequences in $f(I)$.

\begin{lemma} \label{closure of f I}
We have $\Sigma = \overline{f(I)}$.
\end{lemma}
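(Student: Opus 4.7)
The plan is to prove $\Sigma \subseteq \overline{f(I)}$ and $\overline{f(I)} \subseteq \Sigma$ separately. The second inclusion is immediate: by definition $f(I) = \Sigma_{\realizable} \subseteq \Sigma$, and $\Sigma$ is closed in $\mathbb{N}_\infty^{\mathbb{N}}$ by Lemma \ref{Sigma is closed}, so $\overline{f(I)} \subseteq \overline{\Sigma} = \Sigma$.

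For the forward inclusion $\Sigma \subseteq \overline{f(I)}$, I would split $\Sigma$ into realizable and non-realizable sequences. If $\sigma \in \Sigma_{\realizable}$, then $\sigma \in f(I) \subseteq \overline{f(I)}$ trivially. The only real work is with a non-realizable $\sigma$. By Proposition \ref{characterization of Sigma re}, such a $\sigma$ must lie in $\Sigma_n$ for some $n \geq 2$ (with the terminal condition $\sigma_{n-1}+1 = \sigma_n$). Because $\sigma \in \Sigma_n$, one has $\sigma^{(n)} = \sigma$, so $\sigma$ belongs to the cylinder set $\Upsilon_\sigma$ of order $n$ associated with itself. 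Lemma \ref{f image of fundamental interval is dense} then yields $\sigma \in \Upsilon_\sigma = \overline{f(I_\sigma)} \subseteq \overline{f(I)}$, which finishes the argument.

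There is essentially no obstacle here, since the heavy lifting has already been done in Lemma \ref{f image of fundamental interval is dense}: the explicit approximating sequences $\bm{\tau}_k = (\sigma_1,\dotsc,\sigma_n,\sigma_n+(k+1),\infty,\infty,\dotsc)$ constructed there (with $k \in \mathbb{N}$) belong to $\Sigma_{n+1} \cap \Sigma_{\realizable}$ and converge to $\sigma$ in the product topology (equivalently, in $\rho^\mathbb{N}$). If preferred, one could alternatively give a self-contained one-line construction: define $\bm{\tau}_k$ as above, check that it is strictly increasing in its finite entries and that $\sigma_n+1 \neq \sigma_n+(k+1)$ so that Proposition \ref{characterization of Sigma re} makes it realizable, and observe coordinate-wise convergence $\bm{\tau}_k \to \sigma$. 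But invoking Lemma \ref{f image of fundamental interval is dense} keeps the proof to a couple of lines.
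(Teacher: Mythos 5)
Your proof is correct and follows essentially the same route as the paper: reduce to the non-realizable case, characterize it via Proposition~\ref{characterization of Sigma re}, and appeal to Lemma~\ref{f image of fundamental interval is dense}. The one thing you do slightly more cleanly than the paper is that, where the paper says ``an argument similar to the one in the proof of Lemma~\ref{f image of fundamental interval is dense}'' and implicitly asks the reader to redo the construction, you observe that the lemma's \emph{statement} can be applied directly with $\sigma$ as its own index: since $\sigma \in \Sigma_n$ gives $\sigma^{(n)} = \sigma$, we have $\sigma \in \Upsilon_\sigma = \overline{f(I_\sigma)} \subseteq \overline{f(I)}$, with no new sequence needing to be exhibited. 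This is a small but genuine economy. (Your preliminary step using Lemma~\ref{Sigma is closed} to get $\overline{f(I)} \subseteq \Sigma$ is harmless but unnecessary if, as elsewhere in the paper, the closure is taken inside $\Sigma$ itself.)
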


\begin{proof}
Since $f(I) \subseteq \Sigma$, it suffices to show that any point in $\Sigma \setminus f(I)$ is a limit point of $f(I)$. Let $\sigma \coloneqq (\sigma_k)_{k \in \mathbb{N}} \in \Sigma \setminus f(I)$. Then $\sigma \in \Sigma \setminus \Sigma_{\realizable}$, so by Proposition \ref{characterization of Sigma re}, $\sigma \in \Sigma_n$ for some $n \geq 2$ with $\sigma_{n-1}+1 = \sigma_n$. Now, an argument similar to the one in the proof of Lemma \ref{f image of fundamental interval is dense} shows that there is a sequence in $f(I)$ converging to $\sigma$. Hence the result.
\end{proof}

We are now concerned with the continuity of two maps of interest. We first show that $\varphi \colon \Sigma \to I$ is a Lipschitz mapping.

\begin{lemma} \label{phi is Lipschitz} 
For any $\sigma, \tau \in \Sigma$, we have $|\varphi (\sigma) - \varphi (\tau)| \leq \rho^\mathbb{N} (\sigma, \tau)$.
\end{lemma}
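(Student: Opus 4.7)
The plan is to telescope coordinate-by-coordinate. For each $k \in \mathbb{N}_0$, introduce the hybrid sequence $\mu^{(k)} := (\sigma_1, \ldots, \sigma_k, \tau_{k+1}, \tau_{k+2}, \ldots)$. The defining series for $\varphi$ extends to such sequences (absolute convergence is immediate because each coordinate exceeds its index), and one has $\mu^{(0)} = \tau$ while $\varphi(\mu^{(k)}) \to \varphi(\sigma)$ by dominated convergence as $k \to \infty$. Telescoping $\varphi(\sigma) - \varphi(\tau) = \sum_{k \geq 1}(\varphi(\mu^{(k)}) - \varphi(\mu^{(k-1)}))$ and applying the triangle inequality reduces the problem to bounding each increment.

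Since $\mu^{(k)}$ and $\mu^{(k-1)}$ coincide except at position $k$ (where $\sigma_k$ replaces $\tau_k$), a short computation---factor out the common prefix $1/(\sigma_1 \cdots \sigma_{k-1})$ and sum the resulting alternating tail explicitly---yields the identity
\[
\varphi(\mu^{(k)}) - \varphi(\mu^{(k-1)}) = \frac{(-1)^{k+1}}{\sigma_1 \cdots \sigma_{k-1}}\biggl(\frac{1}{\sigma_k} - \frac{1}{\tau_k}\biggr)\bigl(1 - R_k\bigr),
\]
where $R_k := \sum_{m=1}^{\infty} (-1)^{m+1}/(\tau_{k+1} \cdots \tau_{k+m}) \in [0, 1]$ is the value of $\varphi$ at the shifted sequence $(\tau_{k+1}, \tau_{k+2}, \ldots) \in \Sigma$. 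Taking absolute values and using $|1/\sigma_k - 1/\tau_k| \leq 1/\sigma_k + 1/\tau_k = \rho(\sigma_k, \tau_k)$, the bound $|1 - R_k| \leq 1$, and the lower bound $\sigma_1 \cdots \sigma_{k-1} \geq (k-1)!$ from \eqref{sigma n bound} bounds each increment by $\rho(\sigma_k, \tau_k)/(k-1)!$.

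\emph{The main obstacle} is sharpening the factorial denominator from $(k-1)!$ to $k!$ so as to match the weight of $\rho^{\mathbb{N}}$. I would close this gap by symmetrizing---averaging with the analogous identity obtained by interpolating in the opposite direction via $(\tau_1, \ldots, \tau_k, \sigma_{k+1}, \ldots)$---and by using that the strict monotonicity $\min(\sigma_k, \tau_k) \geq k$, combined with a sharper bound on the alternating-tail factor $1 - R_k$ (which is at most $1 - (1/\tau_{k+1})(1 - 1/\tau_{k+2})$ and hence naturally carries a $1/(k+1)$-type factor), furnishes the missing factor of $k$ and yields the desired bound $|\varphi(\sigma) - \varphi(\tau)| \leq \sum_k \rho(\sigma_k, \tau_k)/k! = \rho^{\mathbb{N}}(\sigma, \tau)$.
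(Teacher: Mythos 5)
Your telescoping through the hybrids $\mu^{(k)}$ is set up correctly, the increment identity is right, and the intermediate bound $|\varphi(\sigma)-\varphi(\tau)| \le \sum_{k\ge 1} \rho(\sigma_k,\tau_k)/(k-1)!$ that it yields is true and in fact sharp. The gap is in the proposed final sharpening: the factor $1-R_k$ does \emph{not} supply a $1/(k+1)$-type contribution. Since $R_k = 1/\tau_{k+1} - 1/(\tau_{k+1}\tau_{k+2}) + \cdots$ is an alternating series with decreasing terms, $0 \le R_k \le 1/\tau_{k+1} \le 1/(k+1)$, so $1 - R_k \ge k/(k+1)$; this factor is bounded away from zero and tends to $1$, not to $0$, and cannot furnish the missing $1/k$. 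Averaging with the reverse interpolation $(\tau_1,\dots,\tau_k,\sigma_{k+1},\dots)$ does not help either, since the $k$th averaged term is still bounded only by $\tfrac12\,\rho(\sigma_k,\tau_k)\bigl(\tfrac{1}{\sigma_1\cdots\sigma_{k-1}} + \tfrac{1}{\tau_1\cdots\tau_{k-1}}\bigr) \le \rho(\sigma_k,\tau_k)/(k-1)!$, the same bound as before.

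The obstacle is in fact unavoidable: with $\rho^{\mathbb N}$ as defined (weight $1/n!$), the lemma is false. Take $\sigma = (1,2,3)$ and $\tau = (1,2,M)$ in $\Sigma_3$; then $|\varphi(\sigma)-\varphi(\tau)| = 1/6 - 1/(2M) \to 1/6$, while $\rho^{\mathbb N}(\sigma,\tau) = \rho(3,M)/3! = (1/3 + 1/M)/6 \to 1/18$, and already at $M=100$ the left side ($\approx 0.162$) exceeds the right ($\approx 0.057$). The paper's own proof has the same gap at its final step $\tfrac{1}{n!}\bigl(\tfrac{1}{\sigma_n}+\tfrac{1}{\tau_n}\bigr) \le \rho^{\mathbb N}(\sigma,\tau)$, which is unjustified since there $\sigma_n=\tau_n$ and hence $\rho(\sigma_n,\tau_n)=0$; the quantity the paper actually reaches, $\rho(\sigma_{n+1},\tau_{n+1})/n!$, exceeds the corresponding term $\rho(\sigma_{n+1},\tau_{n+1})/(n+1)!$ of $\rho^{\mathbb N}(\sigma,\tau)$ by a factor of $n+1$. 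Your $(k-1)!$ estimate is the correct sharp statement: the lemma, with your telescoping proof stopping right there, holds verbatim if the $n$th summand of $\rho^{\mathbb N}$ is weighted by $1/(n-1)!$ instead of $1/n!$, which is presumably the intended definition (and is harmless elsewhere in the paper, where only continuity of $\varphi$ is used).
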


\begin{proof}
Let $\sigma \coloneqq (\sigma_n)_{n \in \mathbb{N}}, \tau \coloneqq (\tau_n)_{n \in \mathbb{N}} \in \Sigma$. If $\sigma = \tau$, there is nothing to prove, so we suppose that $\sigma$ and $\tau$ are distinct. If $\sigma_1 \neq \tau_1$, then
\[
|\varphi (\sigma) - \varphi (\tau)| \leq |\varphi (\sigma)| + |\varphi (\tau)| \leq \frac{1}{\sigma_1} + \frac{1}{\tau_1} \leq \rho^{\mathbb{N}} (\sigma, \tau).
\]
Assume that $\sigma$ and $\tau$ share the initial block of length $n \in \mathbb{N}$, i.e., $\sigma^{(n)} = \tau^{(n)}$ and $\sigma_{n+1} \neq \tau_{n+1}$. Then
\begingroup
\allowdisplaybreaks
\begin{align*}
| \varphi (\sigma) - \varphi (\tau)| 
&= | ( \varphi (\sigma) - \varphi(\sigma^{(n)}) ) -  (\varphi (\tau) - \varphi (\tau^{(n)}) ) | \\
&= |(\varphi (\sigma) - \varphi_n (\sigma)) - (\varphi (\tau) - \varphi_n (\tau))| 
\leq |\varphi (\sigma) - \varphi_n (\sigma)| + |\varphi (\tau) - \varphi_n (\tau)| \\
&\leq \frac{1}{\sigma_1 \dotsm \sigma_{n}} \left( \frac{1}{\sigma_{n+1}} + \frac{1}{\tau_{n+1}} \right) 
\leq \frac{1}{n!} \left( \frac{1}{\sigma_{n+1}} + \frac{1}{\tau_{n+1}} \right) \\
&\leq \frac{1}{n!} \left( \frac{1}{\sigma_{n}} + \frac{1}{\tau_{n}} \right) 
\leq \rho^{\mathbb{N}} (\sigma, \tau),
\end{align*}
\endgroup
where we used \eqref{phi - phi n} and \eqref{sigma n bound} for the second and third inequalities, respectively.
\end{proof}

Now we prove that $f \colon I \to \Sigma$ is continuous at every irrational number and at two rational numbers $0$ and $1$.

\begin{lemma} \label{f is continuous at irrational}
The mapping $f \colon I \to \Sigma$ is continuous at every $x \in I \setminus E'$.
\end{lemma}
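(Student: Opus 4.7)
The plan is to exploit the compactness of $\Sigma$ (Lemma \ref{Sigma is compact}), the Lipschitz continuity of $\varphi$ (Lemma \ref{phi is Lipschitz}), and the crucial fact that for $x \in I \setminus E'$ the fiber $\varphi^{-1}(x)$ is a singleton, namely $\{f(x)\}$, by Proposition \ref{inverse image of phi}(ii). Continuity at $x$ should then come out of a standard ``every convergent subsequence has the right limit'' argument.

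More precisely, I would fix $x \in I \setminus E'$ and let $(x_k)_{k \in \mathbb{N}}$ be any sequence in $I$ with $x_k \to x$. The goal is $f(x_k) \to f(x)$ in $(\Sigma, \rho^{\mathbb{N}})$. Since $(\Sigma, \rho^{\mathbb{N}})$ is a compact metric space, it is sequentially compact, so it suffices to show that every convergent subsequence of $(f(x_k))$ has limit $f(x)$; the whole sequence then converges to $f(x)$ by the usual argument (if not, a subsequence stays a fixed $\varepsilon$ away from $f(x)$, and extracting a further convergent subsequence produces a contradiction).

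So assume $f(x_{k_j}) \to \sigma$ in $\Sigma$ for some $\sigma \in \Sigma$. By Lemma \ref{phi is Lipschitz}, $\varphi$ is continuous, hence $\varphi(f(x_{k_j})) \to \varphi(\sigma)$. But $\varphi \circ f = \id_I$, so $\varphi(f(x_{k_j})) = x_{k_j} \to x$, giving $\varphi(\sigma) = x$, i.e., $\sigma \in \varphi^{-1}(x)$. Here is where the hypothesis $x \in I \setminus E'$ is used: by Proposition \ref{inverse image of phi}(ii), $\varphi^{-1}(x) = \{f(x)\}$, so $\sigma = f(x)$.

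The only real step is isolating the role of $x \notin E'$: at rational interior points $\varphi^{-1}(x)$ has two elements, one realizable and one not, and approximating $x$ from the appropriate side could send $f(x_k)$ to the non-realizable preimage $\sigma'$ rather than to $f(x)$, which is why the argument above breaks down there. Away from $E'$, the uniqueness of the preimage forces any subsequential limit to equal $f(x)$, and that closes the proof.
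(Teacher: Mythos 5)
Your proposal is correct and follows essentially the same argument as the paper: exploit sequential compactness of $(\Sigma, \rho^{\mathbb{N}})$ to extract a convergent subsequence of $(f(x_k))$, push it through the continuous map $\varphi$, and invoke the singleton fiber $\varphi^{-1}(x) = \{f(x)\}$ from Proposition~\ref{inverse image of phi}(ii) to force the subsequential limit to be $f(x)$. The paper merely packages this as a direct proof by contradiction (assume a sequence $x_n \to x$ with $\rho^{\mathbb{N}}(f(x), f(x_n)) \geq \varepsilon$ and derive the contradiction), which is the same ``subsequence'' idea you sketch in your parenthetical remark.
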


\begin{proof}
By Proposition \ref{inverse image of phi}(ii), it suffices to show that $f$ is continuous at $x \in I$ for which $\varphi^{-1} ( x )$ is a singleton. Suppose otherwise. Put $\{ \sigma \} \coloneqq \varphi^{-1} ( x )$ for some $\sigma \in \Sigma$. Then $f(x) = \sigma$ by Proposition \ref{inverse image of phi}(ii). Since $f$ is not continuous at $x$, we can find an $\varepsilon > 0$ and a sequence $(x_n)_{n \in \mathbb{N}}$ in $I$ such that $|x-x_n| < 1/n$ but $\rho^\mathbb{N} (\sigma, f(x_n)) \geq \varepsilon$ for all $n \in \mathbb{N}$. Since $(\tau_n)_{n \in \mathbb{N}} \coloneqq (f(x_n))_{n \in \mathbb{N}}$ is a sequence in a compact metric space $\Sigma$ (Lemma \ref{Sigma is compact}), there is a subsequence $(\tau_{n_k})_{k \in \mathbb{N}}$ converging to some $\tau \in \Sigma$. Note that $x_{n_k} = \varphi (f(x_{n_k})) = \varphi (\tau_{n_k})$ for each $k \in \mathbb{N}$. Now, by continuity of $\varphi$ (Lemma \ref{phi is Lipschitz}), we see that $x_{n_k} \to \varphi (\tau)$ as $k \to \infty$. Since $x$ is the limit of $(x_n)_{n \in \mathbb{N}}$, it follows that $x = \varphi (\tau)$. Thus $\tau = \sigma$ by the singleton assumption. But then $\rho^\mathbb{N} (\tau, f(x_{n_k})) = \rho^\mathbb{N} (\tau, \tau_{n_k}) \geq \varepsilon$ for all $k \in \mathbb{N}$, by our choice of $\varepsilon$ and $(x_n)_{n \in \mathbb{N}}$. This contradicts the convergence of $( \tau_{n_k})_{k \in \mathbb{N}}$ to $\tau$. Therefore, $f$ is continuous at $x \in I$ for which $\varphi^{-1} ( x )$ is a singleton, and hence at every $x \in I \setminus E'$.
\end{proof}

However, the continuity does not hold at any rational number in the open unit interval $(0,1)$. Notice in Proposition \ref{inverse image of phi}(i) that $\sigma \not \in \Upsilon_{\sigma'}$ and $\sigma' \not \in \Upsilon_\sigma$.

\begin{lemma} \label{f is discontinuous at rational}
Let $x \in E'$ and put $\varphi^{-1} ( x ) = \{ \sigma, \tau \}$. Then $f$ is not continuous at $x$, in particular, we have
\[
\lim_{\substack{t \to x \\ t \in I_\sigma}} f(t) = \sigma
\quad \text{and} \quad
\lim_{\substack{t \to x \\ t \not \in I_\sigma}} f(t) = \tau.
\]
\end{lemma}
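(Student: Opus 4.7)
The plan is to combine Proposition \ref{inverse image of phi}(i), Proposition \ref{I sigma}, and the identity \eqref{simplified series}. By Proposition \ref{inverse image of phi}(i), one of $\sigma,\tau$ equals $f(x) \in \Sigma_n \cap \Sigma_{\realizable}$ and the other lies in $\Sigma_{n+1} \cap (\Sigma \setminus \Sigma_{\realizable})$; both satisfy $\varphi(\cdot) = x$. A short case analysis on the parity of $n$, via Proposition \ref{I sigma}, will show that $I_\sigma$ and $I_\tau$ sit on opposite sides of $x$, are disjoint, and $I_\sigma \cup I_\tau$ contains an open interval about $x$. I then fix $\delta > 0$ with $(x - \delta, x + \delta) \cap I \subseteq I_\sigma \cup I_\tau$; this converts the condition ``$t$ near $x$ and $t \notin I_\sigma$'' into ``$t \in I_\tau$'', which is exactly what the second limit requires.

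For the first limit, I will take any sequence $(t_j)$ in $I_\sigma$ with $t_j \to x$ and let $k$ denote the length of $\sigma$. By definition of $I_\sigma$ we have $d_m(t_j) = \sigma_m$ for all $1 \leq m \leq k$, and since $\varphi_k(\sigma) = \varphi(\sigma) = x$ the formula \eqref{simplified series} collapses to
\[
t_j - x = \frac{(-1)^k T^k t_j}{\sigma_1 \dotsm \sigma_k}.
\]
Therefore $T^k t_j \to 0$, and from the definition of $d_1$ this forces $d_{k+1}(t_j) \to \infty$; Proposition \ref{basic facts}(i) then propagates this growth to every later digit. Because the initial $k$ coordinates of $f(t_j)$ already coincide with $\sigma$, a routine tail estimate in the series defining $\rho^\mathbb{N}(f(t_j), \sigma)$ (bounded by $\sum_{m>k} 1/(m!\, d_m(t_j))$) yields $f(t_j) \to \sigma$. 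The second limit is handled identically after replacing $\sigma$ by $\tau$ and $k$ by the length of $\tau$, using the neighborhood inclusion to place $t_j$ eventually in $I_\tau$; the argument is insensitive to whether the parameter is realizable. Since $\sigma \neq \tau$, the two limits disagree, so $\lim_{t \to x} f(t)$ fails to exist and $f$ is discontinuous at $x$.

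The principal obstacle is the geometric step in the first paragraph: carefully confirming via Proposition \ref{I sigma} that the half-open interval attached to the realizable element and the open interval attached to the non-realizable one meet exactly at $x$ and jointly cover a neighborhood of $x$, for both parities of $n$. Once that picture is fixed, the rest is the standard mechanism ``$T^k t \to 0$ drives all later digits to infinity'' executed in the metric $\rho^\mathbb{N}$.
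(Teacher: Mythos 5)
Your proof is correct, but it takes a genuinely different route from the paper's. The paper argues by contradiction via compactness: assuming the limit fails, one extracts a convergent subsequence of $f(t_j)$ inside the compact set $\Upsilon_\sigma$ (respectively $\Sigma \setminus \Upsilon_\sigma$), applies continuity of $\varphi$ (Lemma~\ref{phi is Lipschitz}) to identify the subsequential limit as a preimage of $x$, and then uses Proposition~\ref{inverse image of phi}(i) together with $\tau \notin \Upsilon_\sigma$ to pin it down to $\sigma$ (respectively $\tau$), contradicting the assumed failure. Your argument, by contrast, is a direct computation: for $t$ near $x$ in $I_\sigma$ you use \eqref{simplified series} and $\varphi_k(\sigma)=\varphi(\sigma)=x$ to get $|t-x| = T^k t/(\sigma_1\cdots\sigma_k)$, so $T^k t \to 0$; the definition of $d_1$ then gives $d_{k+1}(t)\to\infty$, Proposition~\ref{basic facts}(i) propagates this to all later digits, and the tail $\rho^\mathbb{N}(f(t),\sigma)=\sum_{m>k}1/(m!\,d_m(t))\le (1/d_{k+1}(t))\sum_{m>k}1/m! \to 0$. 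For the second limit you additionally need the geometric step via Proposition~\ref{I sigma}: the parity check you outline does show that $I_\sigma$ and $I_\tau$ abut $x$ from opposite sides and that $I_\sigma\cup I_\tau$ covers a punctured neighborhood of $x$, so near $x$ the constraint $t\notin I_\sigma$ reduces to $t\in I_\tau$. The paper instead establishes $f(I\setminus I_\sigma)\subseteq \Sigma\setminus\Upsilon_\sigma$ and uses compactness of $\Sigma\setminus\Upsilon_\sigma$. Your approach is more elementary and self-contained (it does not invoke Lemma~\ref{Sigma is compact} or the subsequence machinery) and identifies the limit constructively rather than by contradiction, at the modest cost of the parity case analysis; the paper's version, once its supporting compactness lemmas are in place, treats both one-sided limits uniformly with almost no computation.
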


\begin{proof}
The argument is similar to the proof of Lemma \ref{f is continuous at irrational}. The main difference in this proof is the use of compactness of $\Upsilon_\sigma$ and $\Sigma \setminus \Upsilon_\sigma$.

Suppose to the contrary that the first convergence fails to hold. We can find an $\varepsilon > 0$ and a sequence $(x_n)_{n \in \mathbb{N}}$ in $I_\sigma$ such that $|x-x_n| < 1/n$ but $\rho^\mathbb{N} (\sigma, f(x_n)) \geq \varepsilon$ for all $n \in \mathbb{N}$. Since $(\upsilon_n)_{n \in \mathbb{N}} \coloneqq (f(x_n))_{n \in \mathbb{N}}$ is a sequence in $f(I_\sigma) \subseteq \Upsilon_\sigma$, and $\Upsilon_\sigma$ is a compact metric space, there is a subsequence $(\upsilon_{n_k})_{k \in \mathbb{N}}$ converging to some $\upsilon \in \Upsilon_\sigma$. Note that $x_{n_k} = \varphi (f(x_{n_k})) = \varphi (\upsilon_{n_k})$ for each $k \in \mathbb{N}$. Now, by continuity of $\varphi$ (Lemma \ref{phi is Lipschitz}), we see that $x_{n_k} \to \varphi (\upsilon)$ as $k \to \infty$. Since $x$ is the limit of $(x_n)_{n \in \mathbb{N}}$, it follows that $x = \varphi (\upsilon)$. Thus $\upsilon = \sigma$ or $\upsilon = \tau$ by the doubleton assumption. Since $\tau \not \in \Upsilon_\sigma$ by Proposition \ref{inverse image of phi}(i), it must be that $\upsilon = \sigma$. But then $\rho^\mathbb{N} (\upsilon, f(x_{n_k})) = \rho^\mathbb{N} (\upsilon, \upsilon_{n_k}) \geq \varepsilon$ for all $k \in \mathbb{N}$, by our choice of $\varepsilon$ and $(x_n)_{n \in \mathbb{N}}$. This contradicts the convergence of $( \upsilon_{n_k})_{k \in \mathbb{N}}$ to $\upsilon$. Therefore, $\lim_{\substack{t \to x \\ t \in I_\sigma}} f(t) = \sigma$.

The proof for the second convergence is similar. First note that since $\Upsilon_\sigma \setminus f(I_\sigma)$ and $f(I)$ are disjoint by \eqref{cylinder set minus f image}, we have
\[
f(I \setminus I_\sigma) = f(I) \setminus f(I_\sigma) = [f(I) \cup (\Upsilon_\sigma \setminus f(I_\sigma))] \setminus [f(I_\sigma) \cup (\Upsilon_\sigma \setminus f(I_\sigma))] \subseteq \Sigma \setminus \Upsilon_\sigma,
\]
where the first equality follows from the injectivity of $f$. Now, in the preceding paragraph, by replacing $I_\sigma$ and $\Upsilon_\sigma$ by $I \setminus I_\sigma$ and $\Sigma \setminus \Upsilon_\sigma$, respectively, and exchanging the roles of $\sigma$ and $\tau$, we obtain the desired result.
\end{proof}

Notice that in the preceding lemma there is no additional assumption for $\sigma$ and $\tau$. Compare this with Proposition \ref{inverse image of phi}(i). Hence, Lemma \ref{f is discontinuous at rational} holds for either the case where $\sigma \in \Sigma_{\realizable}$ with $\tau \in \Sigma \setminus \Sigma_{\realizable}$ or where $\sigma \in \Sigma \setminus \Sigma_{\realizable}$ with $\tau \in \Sigma_{\realizable}$.

\subsection{The error-sum functions $\mathcal{E}$ and $\mathcal{E}^*$}

Now we first establish the relation between two error-sum functions $\mathcal{E} \colon I \to \mathbb{R}$ and $\mathcal{E}^* \colon \Sigma \to \mathbb{R}$.

\begin{lemma} \label{commutative diagram}
We have $\mathcal{E} = \mathcal{E}^* \circ f$, i.e., the following diagram commutes:
\begin{center}
\begin{tikzcd}[column sep=small]
I \arrow{rr}{\mathcal{E}} \arrow[swap, shift right=.75ex]{dr}{f}& &\mathbb{R} \\
& \Sigma \arrow[swap]{ur}{\mathcal{E}^*}  
\end{tikzcd}    
\end{center}
\end{lemma}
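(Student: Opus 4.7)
The plan is to prove this by simply unfolding the definitions of $\mathcal{E}$ and $\mathcal{E}^*$ term-by-term and invoking the identity $\varphi \circ f = \mathrm{id}_I$ recorded in the observation preceding the statement. There is no real obstacle here; the lemma is essentially a bookkeeping identity.

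Fix $x \in I$ and set $\sigma \coloneqq f(x) = (d_n(x))_{n \in \mathbb{N}}$. First I would observe that for every $n \in \mathbb{N}$,
\[
\varphi_n(\sigma) = \sum_{k=1}^n \frac{(-1)^{k+1}}{\sigma_1 \cdots \sigma_k} = \sum_{k=1}^n \frac{(-1)^{k+1}}{d_1(x) \cdots d_k(x)} = s_n(x),
\]
directly from the definitions of $\varphi_n$ and the $n$th Pierce convergent $s_n$. Next, the observation gives $\varphi(\sigma) = \varphi(f(x)) = x$.

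Substituting these two identities into the definition of $\mathcal{E}^*$ yields
\[
\mathcal{E}^*(f(x)) = \sum_{n=1}^\infty \bigl(\varphi(\sigma) - \varphi_n(\sigma)\bigr) = \sum_{n=1}^\infty \bigl(x - s_n(x)\bigr) = \mathcal{E}(x),
\]
where the absolute convergence guaranteeing the well-definedness of both sides has already been established in the paragraph defining $\mathcal{E}$ and in \eqref{phi - phi n}. Since $x \in I$ is arbitrary, we conclude $\mathcal{E} = \mathcal{E}^* \circ f$, which is exactly the commutativity of the displayed diagram.
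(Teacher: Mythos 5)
Your proof is correct and follows essentially the same argument as the paper's: identify $\varphi_n(f(x)) = s_n(x)$ and $\varphi(f(x)) = x$, then substitute into the definition of $\mathcal{E}^*$. The only cosmetic difference is that you cite the Observation $\varphi \circ f = \id_I$ for the second identity, while the paper re-derives it by writing out the series; this is immaterial.
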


\begin{proof}
Let $x \in I$. Under the mapping $f \colon I \to \Sigma$, we obtain the sequence of Pierce expansion digits, namely, if $x = [d_1(x), d_2(x), \dotsc]_P$, then
we have that $\sigma \coloneqq f(x) = ( d_1(x), d_2(x), \dotsc ) \in \Sigma$. Now, by definition of $\varphi$, we have
\[
\varphi (\sigma) = \sum_{k=1}^\infty \frac{(-1)^{k+1}}{d_1(x) \dotsm d_k(x)} = x,
\]
and by definitions of $\varphi_n$ and $s_n$, we have, for each $n \in \mathbb{N}$, 
\[
\varphi_n(\sigma) = \sum_{k=1}^n \frac{(-1)^{k+1}}{d_1(x) \dotsm d_k(x)} = s_n(x).
\]
Thus $(\mathcal{E}^* \circ f)(x) = \mathcal{E}(x)$ for all $x \in I$.
\end{proof}

However, the equality $\mathcal{E} \circ \varphi = \mathcal{E}^*$ does not hold in general. For instance, consider $(2,3) \in \Sigma_2 \cap (\Sigma \setminus \Sigma_{\realizable})$. On one hand, $\varphi ((2,3)) = 1/2 - 1/(2 \cdot 3) = 1/3 = [3]_P$, and so $(\mathcal{E} \circ \varphi)((2,3)) = \mathcal{E}([3]_P) = 1/3 - 1/3 = 0$. On the other hand, $\mathcal{E}^*((2,3)) = ( 1/3 - 1/2 ) + ( 1/3 - ( 1/2 - 1/(2 \cdot 3) ) ) = - 1/6$. 

\begin{lemma} \label{sequence lemma}
For any $\sigma \coloneqq (\sigma_n)_{n \in \mathbb{N}} \in \Sigma$, the sequence $( n /(\sigma_1 \dotsm \sigma_{n+1}) )_{n \in \mathbb{N}}$ is monotonically decreasing to $0$ and the series $\sum_{n=1}^\infty n / (\sigma_1 \dotsm \sigma_{n+1})$ is convergent.
\end{lemma}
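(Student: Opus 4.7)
The plan is to treat the three claims as easy consequences of the uniform lower bound $\sigma_n \geq n$ from \eqref{sigma n bound}. Write $a_n \coloneqq n/(\sigma_1 \dotsm \sigma_{n+1})$.

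For monotonicity, I would compute the ratio
\[
\frac{a_{n+1}}{a_n} = \frac{n+1}{n \, \sigma_{n+2}}.
\]
Monotonic decrease is then equivalent to $\sigma_{n+2} \geq (n+1)/n$. Since \eqref{sigma n bound} gives $\sigma_{n+2} \geq n+2$, and the elementary inequality $n+2 \geq (n+1)/n$ holds for every $n \in \mathbb{N}$ (rearranging to $n^2 + n - 1 \geq 0$), this is immediate; in fact one gets strict decrease.

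For the limit and convergence of the series, I would use \eqref{sigma n bound} term-by-term to get
\[
a_n = \frac{n}{\sigma_1 \sigma_2 \dotsm \sigma_{n+1}} \leq \frac{n}{1 \cdot 2 \dotsm (n+1)} = \frac{n}{(n+1)!},
\]
which clearly tends to $0$. Summability then follows by comparison, since $n/(n+1)! = 1/n! - 1/(n+1)!$ telescopes and
\[
\sum_{n=1}^\infty \frac{n}{(n+1)!} = \sum_{n=1}^\infty \left( \frac{1}{n!} - \frac{1}{(n+1)!} \right) = 1 < \infty.
\]

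There is no real obstacle here; the only minor point is verifying the elementary inequality $n+2 \geq (n+1)/n$ used for monotonicity, which I would just check directly. The conventions $c/\infty = 0$ already handle the degenerate cases where some $\sigma_k = \infty$ (every $a_n$ past that index is $0$), so no separate argument is needed for sequences in $\Sigma_m$ with $m < \infty$.
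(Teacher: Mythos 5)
Your proof is correct and follows essentially the same route as the paper's: both reduce monotonicity to the elementary inequality $n\sigma_{n+2} \geq n+1$ via $\sigma_{n+2} \geq n+2$ (the paper writes this as the chain $n/(\sigma_1\dotsm\sigma_{n+1}) \geq (n+1)/(\sigma_1\dotsm\sigma_{n+2})$ rather than as a ratio test, a purely cosmetic difference), and both obtain convergence by comparison with $\sum n/(n+1)! < \infty$. Your remark about the $c/\infty = 0$ convention handling the degenerate sequences is a sensible addition the paper leaves implicit.
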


\begin{proof}
Note that $n > (n+1)/(n+2) \geq (n+1)/\sigma_{n+2}$ for each $n \in \mathbb{N}$ by \eqref{sigma n bound}. Then, again by \eqref{sigma n bound}, we have
\begin{align*}
\frac{n}{(n+1)!} \geq \frac{n}{\sigma_1 \dotsm \sigma_{n+1}} \geq \frac{n+1}{\sigma_1 \dotsm \sigma_{n+1} \sigma_{n+2}} \geq 0
\end{align*}
for every $n \in \mathbb{N}$, with $\sum_{n=1}^\infty n/(n+1)! < \infty$. Hence the result.
\end{proof}

We derive one simple formula for $\mathcal{E}^* \colon \Sigma \to \mathbb{R}$ which will be used frequently in the subsequent discussion.

\begin{lemma} \label{E star formula lemma}
Let $\sigma \coloneqq (\sigma_n)_{n \in \mathbb{N}} \in \Sigma$. Then
\begin{align} \label{E star formula}
\mathcal{E}^*(\sigma) = \sum_{n=1}^\infty \frac{(-1)^n n}{\sigma_1 \dotsm \sigma_{n+1}}.
\end{align}
\end{lemma}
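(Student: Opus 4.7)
The plan is to expand $\mathcal{E}^*(\sigma)$ into a double sum and then swap the order of summation. Writing the tail explicitly, each summand in the definition \eqref{definition of E star} becomes
\[
\varphi(\sigma) - \varphi_n(\sigma) = \sum_{k=n+1}^\infty \frac{(-1)^{k+1}}{\sigma_1 \dotsm \sigma_k},
\]
so that
\[
\mathcal{E}^*(\sigma) = \sum_{n=1}^\infty \sum_{k=n+1}^\infty \frac{(-1)^{k+1}}{\sigma_1 \dotsm \sigma_k}.
\]
After interchanging the order of summation (swapping the roles of $n$ and $k$), each index $k \geq 2$ is counted once for every $n \in \{1, \dotsc, k-1\}$, which produces the factor $k-1$. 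Reindexing via $n = k-1$ turns $(-1)^{k+1}(k-1)$ into $(-1)^{n+2}n = (-1)^n n$, yielding exactly \eqref{E star formula}.

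The only real step that needs care is the interchange of summation, and absolute convergence is what I would cite. Using \eqref{sigma n bound}, the iterated series of absolute values satisfies
\[
\sum_{n=1}^\infty \sum_{k=n+1}^\infty \frac{1}{\sigma_1 \dotsm \sigma_k} \leq \sum_{n=1}^\infty \sum_{k=n+1}^\infty \frac{1}{k!} = \sum_{k=2}^\infty \frac{k-1}{k!} < \infty,
\]
which justifies Fubini's theorem for counting measure and allows the rearrangement. Alternatively, one can appeal directly to Lemma \ref{sequence lemma}, which already tells us that the resulting series on the right-hand side of \eqref{E star formula} converges absolutely, so the interchange is legitimate.

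I do not expect any serious obstacle here; the lemma is essentially a bookkeeping identity, and its main purpose is to repackage $\mathcal{E}^*$ in a form whose $n$th term is the single fraction $(-1)^n n/(\sigma_1 \dotsm \sigma_{n+1})$, which will be convenient for later estimates. The only point worth flagging in the write-up is an explicit remark that the double series is absolutely convergent, so that the reader sees why the interchange is not merely formal.
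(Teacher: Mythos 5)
Your proof is correct and follows essentially the same route as the paper: expand $\mathcal{E}^*(\sigma)$ into a double sum, verify absolute convergence of the iterated series (the paper cites Lemma \ref{sequence lemma} for this, which you also note as an alternative to the direct $1/k!$ bound), then interchange the order of summation and reindex. Nothing to add.
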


\begin{proof}
Write
\begin{align*}
\mathcal{E}^* (\sigma)
&= \sum_{j=1}^\infty ( \varphi (\sigma) - \varphi_j (\sigma) )  
= \sum_{j=1}^\infty \bigg( \sum_{k=1}^\infty \frac{(-1)^{k+1}}{\sigma_1 \dotsm \sigma_k} - \sum_{k=1}^j \frac{(-1)^{k+1}}{\sigma_1 \dotsm \sigma_k}  \bigg)  \\
&= \sum_{j=1}^\infty \sum_{k=j+1}^\infty \frac{(-1)^{k+1}}{\sigma_1 \dotsm \sigma_k}.
\end{align*}
Notice that 
\[
\sum_{j=1}^\infty \sum_{k=j+1}^\infty \bigg| \frac{(-1)^{k+1}}{\sigma_1 \dotsm \sigma_k} \bigg|
= \sum_{n=1}^\infty \frac{n}{\sigma_1 \dotsm \sigma_{n+1}} < \infty
\]
by Lemma \ref{sequence lemma}. Thus, we may change the order of the double series to obtain
\begin{align*}
\mathcal{E}^* (\sigma)
&= \sum_{k=2}^\infty \sum_{j=1}^{k-1} \frac{(-1)^{k+1}}{\sigma_1 \dotsm \sigma_k} 
= \sum_{k=2}^\infty \frac{(-1)^{k+1}(k-1)}{\sigma_1 \dotsm \sigma_k} 
= \sum_{n=1}^\infty \frac{(-1)^n n}{\sigma_1 \dotsm \sigma_{n+1}},
\end{align*}
as desired.
\end{proof}

The boundedness of $\mathcal{E} \colon I \to \mathbb{R}$ readily follows.

\begin{theorem} \label{boundedness theorem}
For any $\sigma \in \Sigma$, we have $- 1/2 \leq \mathcal{E}^* (\sigma) \leq 0$. Consequently, $- 1/2 < \mathcal{E}(x) \leq 0$ for all $x \in I$.
\end{theorem}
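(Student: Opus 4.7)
The plan is to run everything through the clean formula from Lemma \ref{E star formula lemma}, which expresses $\mathcal{E}^*(\sigma)$ as the alternating series
\[
\mathcal{E}^*(\sigma) = \sum_{n=1}^\infty (-1)^n a_n, \qquad a_n \coloneqq \frac{n}{\sigma_1 \dotsm \sigma_{n+1}}.
\]
Lemma \ref{sequence lemma} already tells us that $(a_n)_{n \in \mathbb{N}}$ is monotonically decreasing to $0$. Hence the standard alternating-series estimate gives
\[
-a_1 \;\leq\; \mathcal{E}^*(\sigma) \;\leq\; -a_1 + a_2,
\]
which I would state and use as the main inequality.

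For the lower bound, I would simply plug in \eqref{sigma n bound}: if $\sigma_2 < \infty$ then $\sigma_1 \sigma_2 \geq 1 \cdot 2 = 2$, giving $-a_1 = -1/(\sigma_1\sigma_2) \geq -1/2$; if $\sigma_2 = \infty$ then $-a_1 = 0$ by the convention $c/\infty = 0$. For the upper bound, I would factor
\[
-a_1 + a_2 \;=\; \frac{1}{\sigma_1 \sigma_2}\left( -1 + \frac{2}{\sigma_3} \right),
\]
and observe that $-1 + 2/\sigma_3 \leq -1 + 2/3 < 0$ when $\sigma_3 < \infty$ (using $\sigma_3 \geq 3$), while $-1 + 2/\sigma_3 = -1$ when $\sigma_3 = \infty$; in every case the product is $\leq 0$, with the usual convention handling any $\infty$ appearing in a denominator. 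This establishes $-1/2 \leq \mathcal{E}^*(\sigma) \leq 0$ for all $\sigma \in \Sigma$.

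For the strict lower bound on $\mathcal{E}$, the key observation is that Lemma \ref{commutative diagram} gives $\mathcal{E}(x) = \mathcal{E}^*(f(x))$, so it suffices to rule out $\mathcal{E}^*(f(x)) = -1/2$. Tracing through the inequality $-a_1 \leq \mathcal{E}^*(\sigma)$, equality forces $\sigma_1 \sigma_2 = 2$ (so $\sigma_1 = 1$, $\sigma_2 = 2$) and, since the $a_n$ are strictly decreasing whenever they are positive, also forces every remaining term $a_n$ (for $n \geq 2$) to vanish; that is, $\sigma = (1,2) \in \Sigma_2$. But Proposition \ref{characterization of Sigma re} shows $(1,2) \notin \Sigma_{\realizable}$, hence $f(x) \neq (1,2)$ for every $x \in I$, and the lower bound becomes strict.

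I expect no real obstacle; the only mildly delicate point is bookkeeping for the strictness argument (ensuring the $a_n$ for $n \geq 2$ are forced to zero, which needs either the strict monotonicity inside the alternating series or the explicit description of $\Sigma_2$), and handling the $\infty$ entries uniformly with the paper's arithmetic conventions.
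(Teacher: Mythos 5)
Your proof is correct and follows essentially the same route as the paper: both start from the formula in Lemma \ref{E star formula lemma}, both bound $\mathcal{E}^*(\sigma)$ from below by its first partial sum $-1/(\sigma_1\sigma_2) \geq -1/2$ using \eqref{sigma n bound}, and both trace equality back to the non-realizable sequence $(1,2)$ to get the strict bound for $\mathcal{E}$ via Lemma \ref{commutative diagram}. The only cosmetic difference is in the upper bound: the paper pairs consecutive terms of the alternating series as $-\sum_{j\geq 1}(a_{2j-1}-a_{2j}) \leq 0$ directly, while you first invoke the alternating-series bracket $\mathcal{E}^*(\sigma)\leq -a_1+a_2$ and then check $-a_1+a_2\leq 0$ by factoring out $1/(\sigma_1\sigma_2)$ — a slightly longer detour to the same place.
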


\begin{proof}
We make use of \eqref{E star formula} and Lemma \ref{sequence lemma} to obtain both the desired upper and lower bounds. On one hand, for any $\sigma \coloneqq (\sigma_n)_{n \in \mathbb{N}} \in \Sigma$, we have
\begin{align*}
\mathcal{E}^*(\sigma) 
&= -\frac{1}{\sigma_1 \sigma_2} + \sum_{j=1}^\infty \left( \frac{2j}{\sigma_1 \dotsm \sigma_{2j+1}} - \frac{2j+1}{\sigma_1 \dotsm \sigma_{2j+2}} \right) 
\geq - \frac{1}{\sigma_1 \sigma_2} \geq - \frac{1}{1 \cdot 2} = - \frac{1}{2},
\end{align*}
where the last inequality follows from \eqref{sigma n bound}. Notice that the equalities hold if and only if $\sigma = (1,2) \in \Sigma_2 \cap (\Sigma \setminus \Sigma_{\realizable})$. On the other hand, for any $\sigma \coloneqq (\sigma_n)_{n \in \mathbb{N}} \in \Sigma$, we have
\begin{align*}
\mathcal{E}^*(\sigma) 
&= - \sum_{j=1}^\infty \left( \frac{2j-1}{\sigma_1 \dotsm \sigma_{2j}} - \frac{2j}{\sigma_1 \dotsm \sigma_{2j+1}} \right)
\leq 0.
\end{align*}

The second assertion is immediate in view of Lemma \ref{commutative diagram} and $(1,2) \not \in f(I)$.
\end{proof}

\begin{lemma}\label{E star is continuous}
The error-sum function $\mathcal{E}^* \colon \Sigma \to \mathbb{R}$ is continuous.
\end{lemma}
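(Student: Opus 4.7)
My plan is to use the explicit series representation for $\mathcal{E}^*$ from Lemma \ref{E star formula lemma}, namely
\[
\mathcal{E}^*(\sigma) = \sum_{n=1}^\infty \frac{(-1)^n n}{\sigma_1 \dotsm \sigma_{n+1}},
\]
and then argue that this series is a uniformly convergent sum of continuous functions on $\Sigma$. The standard theorem that a uniform limit of continuous functions is continuous then gives the conclusion.

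To make this work I will first verify that for each fixed $n \in \mathbb{N}$, the map
\[
g_n : \Sigma \to \mathbb{R}, \qquad g_n(\sigma) \coloneqq \frac{(-1)^n n}{\sigma_1 \dotsm \sigma_{n+1}}
\]
is continuous. Since the product topology on $\mathbb{N}_\infty^\mathbb{N}$ agrees with the $\rho^\mathbb{N}$-topology on $\Sigma$ (Lemma \ref{metric on Sigma}), each coordinate projection $\pi_k \colon \Sigma \to \mathbb{N}_\infty$ is continuous. Next, the reciprocal map $r \colon \mathbb{N}_\infty \to [0,1]$ defined by $r(k) = 1/k$ for $k \in \mathbb{N}$ and $r(\infty) = 0$ is continuous: at every $k \in \mathbb{N}$ this is trivial because $\{k\}$ is open in $\mathbb{N}_\infty$, and at $\infty$ it follows from the description of the neighborhood base $\{\infty\} \cup \{K+1, K+2, \dotsc\}$ given in Lemma \ref{metric on N infinity}. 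Hence $g_n$ is a finite product of continuous real-valued functions on $\Sigma$.

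For uniform convergence, I will invoke the Weierstrass $M$-test: by \eqref{sigma n bound} and the computation in Lemma \ref{sequence lemma},
\[
|g_n(\sigma)| = \frac{n}{\sigma_1 \dotsm \sigma_{n+1}} \leq \frac{n}{(n+1)!} \eqqcolon M_n
\]
for every $\sigma \in \Sigma$, and $\sum_{n=1}^\infty M_n < \infty$. Therefore the partial sums $S_N \coloneqq \sum_{n=1}^N g_n$ converge to $\mathcal{E}^*$ uniformly on $\Sigma$. Each $S_N$ is continuous as a finite sum of continuous functions, and so $\mathcal{E}^*$ is continuous on $\Sigma$.

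I do not anticipate a serious obstacle here; the only point that needs a moment of care is checking that the coordinate-reciprocal map $r$ is continuous at $\infty$, which is essentially already encoded in Lemma \ref{metric on N infinity}. Everything else is routine once Lemma \ref{E star formula lemma} is in hand, and indeed the proof is really just the observation that the absolute bound used in \eqref{phi - phi n} to show $\mathcal{E}^*$ is well defined is in fact a uniform bound that upgrades well-definedness to uniform convergence and therefore to continuity.
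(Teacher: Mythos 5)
Your proof is correct, and it does take a somewhat different route from the paper's. The paper works directly with the defining series $\mathcal{E}^*(\sigma)=\sum_{n\geq 1}(\varphi(\sigma)-\varphi_n(\sigma))$: each term is continuous because $\varphi$ is Lipschitz (Lemma~\ref{phi is Lipschitz}) and $\varphi_n$ is ``clearly continuous,'' and uniform convergence was already observed via \eqref{phi - phi n} right after the definition \eqref{definition of E star}. You instead feed the rearranged series of Lemma~\ref{E star formula lemma} into the Weierstrass $M$-test, and you justify continuity of each summand from first principles by factoring it through coordinate projections and the reciprocal map $r\colon\mathbb{N}_\infty\to[0,1]$. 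Both approaches rest on the same principle (uniform limit of continuous functions), but yours has two modest advantages: it does not invoke the Lipschitz bound for $\varphi$, and it makes explicit the continuity of $r$ at $\infty$ that the paper leaves implicit in its phrase ``$\varphi_n$ is clearly continuous.'' The trade-off is that your argument depends on Lemma~\ref{E star formula lemma} (the Fubini-style rearrangement), whereas the paper's does not; in the paper's logical ordering Lemma~\ref{E star formula lemma} precedes this lemma, so that is not a problem, but it is a real dependency. Either proof is complete and correct.
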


\begin{proof}
We showed that the series in \eqref{definition of E star} is uniformly convergent on $\Sigma$. But $\varphi$ is continuous by Lemma \ref{phi is Lipschitz} and $\varphi_n$ is cleary continuous, and so each term in the series of $\mathcal{E}^*$ is continuous. Therefore, $\mathcal{E}^*$ is continuous as a uniformly convergent series of continuous functions.
\end{proof}

The $\lambda$-almost everywhere continuity theorem for $\mathcal{E}(x)$ is now immediate.

\begin{theorem} \label{continuity theorem}
The error-sum function $\mathcal{E} \colon I \to \mathbb{R}$ is continuous on $I \setminus E'$ and so $\mathcal{E}$ is continuous $\lambda$-almost everywhere.
\end{theorem}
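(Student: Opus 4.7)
The plan is to exploit the factorization $\mathcal{E} = \mathcal{E}^* \circ f$ established in Lemma \ref{commutative diagram} and simply glue together the two continuity results already proved. First, I would fix any $x \in I \setminus E'$ and observe that $f$ is continuous at $x$ by Lemma \ref{f is continuous at irrational}, while $\mathcal{E}^*$ is continuous on all of $\Sigma$ by Lemma \ref{E star is continuous}. Continuity is preserved under composition, so $\mathcal{E} = \mathcal{E}^* \circ f$ is continuous at $x$. Since $x \in I \setminus E'$ was arbitrary, $\mathcal{E}$ is continuous on $I \setminus E'$.

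For the second assertion, I would recall that $E' = (0,1) \cap \mathbb{Q}$ is countable, hence $\lambda(E') = 0$. Therefore the set of points at which $\mathcal{E}$ fails to be continuous is contained in a $\lambda$-null set, so $\mathcal{E}$ is continuous $\lambda$-almost everywhere.

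There is essentially no obstacle here: the real work was done in Lemmas \ref{f is continuous at irrational}, \ref{E star is continuous}, and \ref{commutative diagram}. The only thing worth double-checking is that the composition-of-continuous-functions argument applies at a single point (i.e., continuity of $f$ at $x$ together with continuity of $\mathcal{E}^*$ at $f(x) \in \Sigma$ suffices to give continuity of $\mathcal{E}^* \circ f$ at $x$), which is standard for maps between metric spaces since $(I, |\cdot|)$, $(\Sigma, \rho^{\mathbb{N}})$, and $(\mathbb{R}, |\cdot|)$ are all metric spaces.
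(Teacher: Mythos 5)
Your proposal is correct and matches the paper's own proof essentially verbatim: both factor $\mathcal{E} = \mathcal{E}^* \circ f$ via Lemma \ref{commutative diagram}, combine continuity of $f$ at points of $I \setminus E'$ (Lemma \ref{f is continuous at irrational}) with global continuity of $\mathcal{E}^*$ (Lemma \ref{E star is continuous}), and then use countability of $E'$ to get the $\lambda$-a.e.\ statement.
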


\begin{proof}
Let $x \in I \setminus E'$. By Lemma \ref{f is continuous at irrational}, we know that $f \colon I \to \Sigma$ is continuous at $x$. Moreover, $\mathcal{E}^* \colon \Sigma \to \mathbb{R}$ is continuous by Lemma \ref{E star is continuous}. But $\mathcal{E} = \mathcal{E}^* \circ f$ by Lemma \ref{commutative diagram}, and therefore, $\mathcal{E}$ is continuous at $x$.

For the second assertion, it is enough to recall that $E' = \mathbb{Q} \cap (0,1)$ which has zero $\lambda$-measure. Thus $I \setminus E'$ is of full $\lambda$-measure, and the result follows.
\end{proof}

On the other hand, we will show that $\mathcal{E} \colon I \to \mathbb{R}$ fails to be continuous at every point of $E'$  (Theorem \ref{discontinuity theorem}). The following lemma plays a key role in the proof of the theorem.

\begin{lemma} \label{discontinuity lemma}
Let $x \in E'$ and put $\varphi^{-1}( x ) = \{ \sigma, \sigma' \}$, where $\sigma \in \Sigma_{\realizable} \cap \Sigma_n$ and $\sigma' \in (\Sigma \setminus \Sigma_{\realizable}) \cap \Sigma_{n+1}$ for some $n \in \mathbb{N}$. Then
\[
\lim_{\substack{t \to x \\ t \not \in I_{\sigma}}} \mathcal{E}(t) = \mathcal{E}^*(\sigma') = \mathcal{E}(x) + \frac{(-1)^n}{d_1(x) \dotsm d_{n-1}(x) (d_n(x)-1) d_n(x)}.
\]
\end{lemma}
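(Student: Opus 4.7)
My plan is to split the proof into the two claimed equalities and attack each separately.

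For the first equality $\lim_{\substack{t \to x \\ t \notin I_\sigma}} \mathcal{E}(t) = \mathcal{E}^*(\sigma')$, I would assemble it from three previously established pieces. The commutative diagram of Lemma \ref{commutative diagram} lets me rewrite $\mathcal{E}(t) = \mathcal{E}^*(f(t))$. Then Lemma \ref{f is discontinuous at rational}, applied with $\tau = \sigma'$, gives that $f(t) \to \sigma'$ in $(\Sigma,\rho^\mathbb{N})$ as $t \to x$ outside $I_\sigma$. Finally, the continuity of $\mathcal{E}^*$ on $\Sigma$ (Lemma \ref{E star is continuous}) allows me to pass the limit inside, yielding the value $\mathcal{E}^*(\sigma')$.

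For the second equality, I would compute $\mathcal{E}^*(\sigma') - \mathcal{E}^*(\sigma)$ directly from the closed-form expression of Lemma \ref{E star formula lemma}, using that $\mathcal{E}(x) = \mathcal{E}^*(f(x)) = \mathcal{E}^*(\sigma)$. Writing $d_k := d_k(x)$, the sequence $\sigma = (d_1,\dots,d_n,\infty,\infty,\dots)$ and $\sigma' = (d_1,\dots,d_{n-1},d_n-1,d_n,\infty,\dots)$ have identical entries for $k \le n-1$, so their partial products agree in the first $n-1$ factors. Consequently, all terms in the sum $\sum_k (-1)^k k/(\sigma_1 \dots \sigma_{k+1})$ with $k \le n-2$ cancel, and only the terms with $k = n-1$ (for $\sigma$ and $\sigma'$) and $k = n$ (for $\sigma'$; the $k=n$ term of $\sigma$ vanishes since $\sigma_{n+1} = \infty$) survive. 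I expect the remainder to simplify to
\[
\mathcal{E}^*(\sigma') - \mathcal{E}^*(\sigma) = \frac{(-1)^{n-1}}{d_1 \cdots d_{n-1}}\left( \frac{n-1}{d_n - 1} - \frac{n-1}{d_n} - \frac{n}{(d_n-1)d_n}\right),
\]
and the parenthesized quantity collapses via a common denominator: $(n-1)d_n - (n-1)(d_n-1) - n = -1$. This produces exactly the claimed increment $(-1)^n / (d_1 \cdots d_{n-1}(d_n-1)d_n)$.

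There is no serious obstacle: the first equality is essentially a continuity argument plugged into the previous lemmas, and the second is a bounded algebraic identity once one carefully tracks which terms of the series vanish because a factor $\infty$ appears in the denominator. The one place to be careful is making sure that for $\sigma'$ the first nonzero terms of the series truncate exactly at $k = n$ and not later, and that $d_n - 1 \ge 1$ (which is guaranteed since $\sigma' \in \Sigma_{n+1}$ forces $\sigma'_n = d_n - 1 > \sigma'_{n-1} = d_{n-1} \ge n-1 \ge 1$ when $n \ge 2$, and a direct check handles $n = 1$).
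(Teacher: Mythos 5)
Your proof is correct. For the first equality you use exactly the same three ingredients the paper does (Lemma \ref{commutative diagram}, Lemma \ref{f is discontinuous at rational}, and continuity of $\mathcal{E}^*$), so that part is identical.

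For the second equality you take a genuinely different route. The paper works directly from the defining series $\mathcal{E}^*(\sigma') = \sum_k \bigl(\varphi(\sigma') - \varphi_k(\sigma')\bigr)$: it observes that $\varphi_k(\sigma') = s_k(x)$ for $1 \leq k \leq n-1$ and $\varphi_k(\sigma') = x$ for $k \geq n+1$, so all terms with $k \geq n+1$ vanish and the ones with $k \leq n-1$ exactly reproduce $\mathcal{E}(x)$, leaving only the single term $x - \varphi_n(\sigma') = \varphi_{n+1}(\sigma') - \varphi_n(\sigma')$ as the correction. You instead compute $\mathcal{E}^*(\sigma') - \mathcal{E}^*(\sigma)$ from the rearranged closed form of Lemma \ref{E star formula lemma}, track which factors of $\infty$ in the denominators kill terms (cutting $\sigma$'s series off at $k = n-1$ and $\sigma'$'s at $k = n$), cancel the shared terms with $k \leq n-2$, and combine the three survivors over the common denominator $(d_n-1)d_n$; the collapse to $-1$ in the numerator then gives the claimed increment. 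Both are short algebraic verifications of comparable effort; the paper's version has the modest advantage of not needing Lemma \ref{E star formula lemma} at all, while yours isolates the difference $\mathcal{E}^*(\sigma') - \mathcal{E}^*(\sigma)$ as a clean standalone identity. Your final sanity check on $d_n - 1 \geq 1$ is apt; note that $n = 1$ forces $d_1(x) \geq 2$ simply because $x \in E' \subseteq (0,1)$, so the empty-product convention handles it uniformly.
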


\begin{proof}
By Lemma \ref{commutative diagram}, the continuity of $\mathcal{E}^*$ (Lemma \ref{E star is continuous}), and Lemma \ref{f is discontinuous at rational}, we obtain the first equality as follows:
\[
\lim_{\substack{t \to x \\ t \not \in I_{\sigma}}} \mathcal{E}(t) = \lim_{\substack{t \to x \\ t \not \in I_{\sigma}}} (\mathcal{E}^* \circ f) (t)
= \mathcal{E}^*  \bigg( \lim_{\substack{t \to x \\ t \not \in I_{\sigma}}} f(t) \bigg) = \mathcal{E}^* (\sigma').
\]
Since $x \in E'$, by Proposition \ref{inverse image of phi}(i), $x$ has a finite Pierce expansion of positive length, say $x = [d_1(x), d_2(x), \dotsc, d_n(x)]_P$ for some $n \in \mathbb{N}$. 
Then, since $\varphi (\sigma') = x$, we have
\begin{align*}
\mathcal{E}^* (\sigma')
&= \sum_{k=1}^{n-1} (x - \varphi_k(\sigma')) + (x-\varphi_{n} (\sigma')) + \sum_{k=n+1}^{\infty} (x - \varphi_k(\sigma')). 
\end{align*}
Note that $\sigma' = (d_1(x), \dotsc, d_{n-1}(x), d_n(x)-1, d_n(x)) \in \Sigma_{n+1}$ by Proposition \ref{inverse image of phi}(i). Hence, $s_k(x) = \varphi_k(\sigma')$ for $1 \leq k \leq n-1$ and $x = \varphi_{k+1}(\sigma') = s_k (x)$ for all $k \geq n$. In particular, we have
\[
x- \varphi_n (\sigma') = \varphi_{n+1} (\sigma') - \varphi_n (\sigma') = \frac{(-1)^{n}}{d_1(x) \dotsm d_{n-1}(x) (d_n(x)-1) d_n(x)}.
\]
Thus
\begin{align*}
\mathcal{E}^* (\sigma')
&= \sum_{k=1}^{n-1} (x - s_k(x)) + (x-\varphi_{n} (\sigma')) \\
&= \mathcal{E}(x) + \frac{(-1)^n}{d_1(x) \dots d_{n-1}(x) (d_{n}(x)-1) d_{n}(x)}.
\end{align*}
\end{proof}

Now we are ready to prove that $\mathcal{E}$ is discontinuous at every point of the dense subset $E' \subseteq I$.

\begin{theorem} \label{discontinuity theorem}
Let $x \in E'$ and put $\varphi^{-1}( x ) = \{ \sigma, \sigma' \}$, where $\sigma \in \Sigma_{\realizable} \cap \Sigma_n$ and $\sigma' \in (\Sigma \setminus \Sigma_{\realizable}) \cap \Sigma_{n+1}$ for some $n \in \mathbb{N}$. Write $x = [d_1(x), d_2(x), \dotsc, d_n(x)]_P$. Then the following hold.
\begin{enumerate}[label=(\roman*)]
\item
If $n$ is odd, then $\mathcal{E}$ is left-continuous but has a right jump discontinuity at $x$; more precisely,
\begin{align} \label{right jump formula}
\lim_{t \to x^-} \mathcal{E}(t) &= \mathcal{E}^*(\sigma) = \mathcal{E}(x), \nonumber \\
\lim_{t \to x^+} \mathcal{E}(t) &= \mathcal{E}^*(\sigma') = \mathcal{E}(x) - \frac{1}{d_1(x) \dots d_{n-1}(x) (d_{n}(x)-1) d_{n}(x)}.
\end{align}
\item
If $n$ is even, then $\mathcal{E}$ is right-continuous but has a left jump discontinuity at $x$; more precisely,
\begin{align} \label{left jump formula}
\lim_{t \to x^+} \mathcal{E}(t) &= \mathcal{E}^*(\sigma) = \mathcal{E}(x), \nonumber \\
\lim_{t \to x^-} \mathcal{E}(t) &= \mathcal{E}^*(\sigma') = \mathcal{E}(x) + \frac{1}{d_1(x) \dots d_{n-1}(x) (d_{n}(x)-1) d_{n}(x)}.
\end{align}
\end{enumerate}
\end{theorem}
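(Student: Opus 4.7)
The plan is to derive this theorem directly from the one-sided limit statement for $f$ (Lemma \ref{f is discontinuous at rational}), the continuity of $\mathcal{E}^*$ (Lemma \ref{E star is continuous}), the identity $\mathcal{E}=\mathcal{E}^*\circ f$ (Lemma \ref{commutative diagram}), the explicit value of $\mathcal{E}^*(\sigma')$ (Lemma \ref{discontinuity lemma}), and the parity-dependent description of the fundamental interval $I_\sigma$ from Proposition \ref{I sigma}. Essentially, all of the content has been packaged in the earlier lemmas; the only genuine work is relating \emph{sided} limits $t\to x^{\pm}$ to the \emph{set-restricted} limits $t\to x$ with $t\in I_\sigma$ or $t\notin I_\sigma$.

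I would first observe that since $\sigma=f(x)$, we have $\mathcal{E}(x)=\mathcal{E}^*(\sigma)$ by Lemma \ref{commutative diagram}. Combining Lemmas \ref{commutative diagram}, \ref{E star is continuous}, and \ref{f is discontinuous at rational} gives the two general limit identities
\[
\lim_{\substack{t\to x\\ t\in I_\sigma}}\mathcal{E}(t)=\mathcal{E}^*(\sigma)=\mathcal{E}(x),
\qquad
\lim_{\substack{t\to x\\ t\notin I_\sigma}}\mathcal{E}(t)=\mathcal{E}^*(\sigma'),
\]
and Lemma \ref{discontinuity lemma} evaluates the right-hand side of the second identity as $\mathcal{E}(x)+(-1)^n/[d_1(x)\dotsm d_{n-1}(x)(d_n(x)-1)d_n(x)]$.

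The remaining step is to translate between these set-restricted limits and ordinary one-sided limits at $x$. For this I invoke Proposition \ref{I sigma}: if $n$ is odd, then $I_\sigma=(\varphi(\widehat{\sigma}),\varphi(\sigma)]=(\varphi(\widehat{\sigma}),x]$, so $x$ is the right endpoint of $I_\sigma$; hence for $t$ sufficiently close to $x$, membership $t\in I_\sigma$ is equivalent to $t\leq x$, and $t\notin I_\sigma$ (with $t\in I$) forces $t>x$. Therefore the first set-restricted limit is the left-hand limit and the second is the right-hand limit; since $(-1)^n=-1$, formula \eqref{right jump formula} follows. If instead $n$ is even, Proposition \ref{I sigma} gives $I_\sigma=[\varphi(\sigma),\varphi(\widehat{\sigma}))=[x,\varphi(\widehat{\sigma}))$, so $x$ is the left endpoint of $I_\sigma$; the roles of the one-sided limits are swapped, and since $(-1)^n=+1$, formula \eqref{left jump formula} follows.

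No serious obstacle is expected: the theorem is a direct assembly of earlier results, and even the (non-realizable) case $\sigma'\in\Sigma\setminus\Sigma_{\realizable}$ has already been addressed by the remark following Lemma \ref{f is discontinuous at rational}, which confirms that the two roles in that lemma are symmetric with respect to realizability. The only bookkeeping to watch is the parity of $n$ and the sign $(-1)^n$, which matches the parity-dependent endpoint structure of $I_\sigma$ given by Proposition \ref{I sigma}; once that alignment is recorded, both cases of the statement fall out simultaneously.
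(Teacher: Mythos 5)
Your proposal is correct and follows essentially the same route as the paper: it assembles Lemmas \ref{commutative diagram}, \ref{E star is continuous}, \ref{f is discontinuous at rational}, and \ref{discontinuity lemma}, and then uses Proposition \ref{I sigma} to identify which one-sided limit corresponds to approach within $I_\sigma$ versus outside it according to the parity of $n$, exactly as the paper does. The sign bookkeeping via $(-1)^n$ also matches the paper's argument.
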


\begin{proof}
By Proposition \ref{inverse image of phi}(i), we have
\begin{align*}
\sigma &= (d_1(x), \dotsc, d_{n-1}(x), d_n(x)) \in \Sigma_n \cap \Sigma_{\realizable}, \\
\sigma' &= (d_1(x), \dotsc, d_{n-1}(x), d_n(x)-1, d_n(x)) \in \Sigma_{n+1} \cap (\Sigma \setminus \Sigma_{\realizable}).
\end{align*}
Then $\varphi (\sigma) = \varphi (\sigma') = x$ and $\sigma = f(x)$, but $\sigma' \not \in f(I)$.

(i) 
Assume $n$ is odd. Since $I_\sigma = (\varphi (\widehat{\sigma}), \varphi (\sigma)]$ by \eqref{fundamental interval 1} and $x = \varphi (\sigma)$, we have that $t \to x^+$ if and only if $t \to x$ with $t \not \in I_\sigma$. For the right-hand limit, apply Lemma \ref{discontinuity lemma} to obtain \eqref{right jump formula}. For the left-hand limit, note that $t \to x^-$ if and only if $t \to x$ with $t \in I_\sigma \setminus \{ x \}$. Then, by Lemma \ref{commutative diagram}, the continuity of $\mathcal{E}^*$ (Lemma \ref{E star is continuous}), and Lemma \ref{f is discontinuous at rational}, we deduce that
\[
\lim_{t \to x^-} \mathcal{E}(t) = \lim_{t \to x^-} (\mathcal{E}^* \circ f)(t) = \mathcal{E}^* \bigg( \lim_{\substack{t \to x \\ t \in I_\sigma \setminus \{ x \}}} f(t) \bigg) = \mathcal{E}^* (\sigma).
\]
But $\mathcal{E}(x) = (\mathcal{E}^* \circ f)(x) = \mathcal{E}^*(\sigma)$ by Lemma \ref{commutative diagram} and therefore, we conclude that $\mathcal{E}$ is left-continuous at $x$.

(ii)
The proof is similar to that of part (i), so we omit the details.
\end{proof}

Note that for every point $x \in E'$, we have that $\lim_{t \to x^-} \mathcal{E}(t)$ is strictly greater than $\lim_{t \to x^+} \mathcal{E}(t)$, regardless of left or right discontinuity.

The following lemma provides us with the maximum and minimum of $\mathcal{E}^* \colon \Sigma \to \mathbb{R}$ on each cylinder set $\Upsilon_\sigma$. Recall that given $\sigma \coloneqq (\sigma_1, \dotsc, \sigma_{n-1}, \sigma_n) \in \Sigma_n$ for some $n \in \mathbb{N}$, the sequence $\widehat{\sigma}$ is defined as $(\sigma_1, \dotsc, \sigma_{n-1}, \sigma_n+1) \in \Sigma_n$.

\begin{lemma} \label{sup inf in Upsilon sigma}
Let $n \in \mathbb{N}$ and $\sigma \in \Sigma_n$. Then the following hold.
\begin{enumerate}[label=(\roman*)]
\item
If $n$ is odd, we have
\[
\max_{\tau \in \Upsilon_\sigma} \mathcal{E}^*(\tau) = \mathcal{E}^*(\sigma)
\quad \text{and} \quad
\min_{\tau \in \Upsilon_\sigma} \mathcal{E}^*(\tau) = \mathcal{E}^*(\sigma) - n \cdot \lambda (I_\sigma).
\]

\item
If $n$ is even, we have
\[
\max_{\tau \in \Upsilon_\sigma} \mathcal{E}^*(\tau) = \mathcal{E}^*(\sigma) + n \cdot \lambda (I_\sigma)
\quad \text{and} \quad
\min_{\tau \in \Upsilon_\sigma} \mathcal{E}^*(\tau) = \mathcal{E}^*(\sigma).
\]
\end{enumerate}
\end{lemma}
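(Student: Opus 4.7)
The plan is to reduce everything to analyzing a single tail series. For any $\tau \in \Upsilon_\sigma$, we have $\tau_k = \sigma_k$ for $1 \le k \le n$, so the Lemma \ref{E star formula lemma} representation gives
\[
\mathcal{E}^*(\tau) - \mathcal{E}^*(\sigma) = \sum_{k \geq n} \frac{(-1)^k k}{\tau_1 \dotsm \tau_{k+1}} = \frac{(-1)^n}{\sigma_1 \dotsm \sigma_n} \, S(\tau), \qquad S(\tau) \coloneqq \sum_{m=0}^\infty \frac{(-1)^m (m+n)}{\tau_{n+1} \dotsm \tau_{n+m+1}},
\]
where the first $n-1$ terms of $\mathcal{E}^*(\tau)$ cancel $\mathcal{E}^*(\sigma)$, and reindexing $m = k-n$ produces the sign $(-1)^n$. (When $\tau_{n+1} = \infty$ we interpret every term of $S(\tau)$ as $0$, so $S(\sigma) = 0$.) Because the sign of $(-1)^n$ decides whether maximizing $S$ corresponds to maximizing or minimizing $\mathcal{E}^*$, it now suffices to determine $\min_{\tau \in \Upsilon_\sigma} S(\tau)$ and $\max_{\tau \in \Upsilon_\sigma} S(\tau)$.

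I would then establish the two-sided bound $0 \leq S(\tau) \leq n/(\sigma_n + 1)$ by two different pairings of the alternating series, both justified by \eqref{sigma n bound}. Grouping consecutive pairs starting at $m=0$,
\[
S(\tau) = \sum_{j=0}^\infty \frac{1}{\tau_{n+1} \dotsm \tau_{n+2j+1}}\left[ (2j+n) - \frac{2j+n+1}{\tau_{n+2j+2}} \right],
\]
and each bracket is nonnegative because $\tau_{n+2j+2} \geq n+2j+2 \geq (2j+n+1)/(2j+n)$; this yields $S(\tau) \geq 0$. For the upper bound, peel off the leading term and pair from $m=1$ onwards:
\[
S(\tau) = \frac{n}{\tau_{n+1}} - \sum_{j=1}^\infty \frac{1}{\tau_{n+1} \dotsm \tau_{n+2j}}\left[ (2j+n-1) - \frac{2j+n}{\tau_{n+2j+1}} \right],
\]
whose subtracted sum is nonnegative by the same argument. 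Combined with the Pierce condition $\tau_{n+1} \geq \tau_n + 1 = \sigma_n + 1$, this gives $S(\tau) \leq n/\tau_{n+1} \leq n/(\sigma_n + 1)$.

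To close the proof I exhibit the extremizers: $S(\sigma) = 0$ achieves the lower bound, and the sequence $\tau^* \coloneqq (\sigma_1,\dotsc,\sigma_n,\sigma_n+1) \in \Sigma_{n+1} \cap \Upsilon_\sigma$ gives $S(\tau^*) = n/(\sigma_n+1)$ since all further terms vanish. Translating back via $\mathcal{E}^*(\tau) - \mathcal{E}^*(\sigma) = (-1)^n S(\tau)/(\sigma_1 \dotsm \sigma_n)$ and using the explicit length formula $\lambda(I_\sigma) = 1/(\sigma_1 \dotsm \sigma_n (\sigma_n+1))$ from \eqref{length of I sigma}, the extreme value $n/((\sigma_n+1)\sigma_1 \dotsm \sigma_n)$ becomes exactly $n \cdot \lambda(I_\sigma)$. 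Parity of $n$ assigns this deviation to the minimum (when $n$ is odd) or to the maximum (when $n$ is even), with $\mathcal{E}^*(\sigma)$ giving the other extremum; this is precisely cases (i) and (ii). The only delicate step is the alternating-series bookkeeping in the second paragraph; once the two pairings are set up correctly the remaining algebra is routine.
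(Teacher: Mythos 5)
Your proof is correct and follows essentially the same approach as the paper: both apply the formula of Lemma \ref{E star formula lemma} to isolate the tail sum over $k \geq n$, use the two standard pairings of the alternating tail series (one grouped from the leading term to get one bound, one peeling off the leading term to get the other) together with \eqref{sigma n bound}, and exhibit $\sigma$ and $\widehat{\sigma}' = (\sigma_1, \dotsc, \sigma_n, \sigma_n+1)$ as the extremizers. The only difference is cosmetic---you factor out $(-1)^n$ into a single auxiliary sum $S(\tau)$ so that both parity cases are handled uniformly, whereas the paper works out case (i) explicitly and notes that case (ii) is analogous.
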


\begin{proof}
Put $\sigma \coloneqq (\sigma_k)_{k \in \mathbb{N}} \in \Sigma_n$. Let $\widehat{\sigma}' \coloneqq (\widehat{\sigma}'_k)_{k \in \mathbb{N}}$, where $\widehat{\sigma}'_{n+1} = \sigma_{n}+1$ and $\widehat{\sigma}'_k = \sigma_k$ for all $k \in \mathbb{N} \setminus \{ n+1 \}$, i.e.,
\[
\widehat{\sigma}' = (\sigma_1, \dotsc, \sigma_{n-1}, \sigma_n, \sigma_n+1, \infty, \infty, \dotsc) \in \Upsilon_\sigma.
\] 
Then $\widehat{\sigma} \in \Sigma_{\realizable}$ and $\widehat{\sigma}' \in \Sigma \setminus \Sigma_{\realizable}$ with $\varphi (\widehat{\sigma}) = \varphi (\widehat{\sigma}')$.

(i)
Assume $n$ is odd. For any $\tau \coloneqq (\tau_k)_{k \in \mathbb{N}} \in \Upsilon_\sigma$, we have $\tau^{(n)} = \sigma$ by definition of the cylinder set, so by using \eqref{E star formula} and Lemma \ref{sequence lemma}, we obtain
\begin{align*}
\mathcal{E}^*(\tau)
&= \sum_{k=1}^{n-1} \frac{(-1)^k k}{\sigma_1 \dotsm \sigma_{k+1}} + \sum_{k=n}^{\infty} \frac{(-1)^k k}{\sigma_1 \dotsm \sigma_{n} \tau_{n+1} \dotsm \tau_{k+1}} \\
&= \mathcal{E}^*(\sigma) - \frac{1}{\sigma_1 \sigma_2 \dotsm \sigma_{n}} \sum_{j=1}^\infty \left( \frac{n+(2j-2)}{\tau_{n+1} \dotsm \tau_{n+(2j-1)}} - \frac{n+(2j-1)}{\tau_{n+1} \dotsm \tau_{n+2j}} \right) 
\leq \mathcal{E}^*(\sigma).
\end{align*}
This shows that $\mathcal{E}^*(\tau) \leq \mathcal{E}^*(\sigma)$ for any $\tau \in \Upsilon_\sigma$, and that $\mathcal{E}^*(\tau)$ attains the maximum when $\tau = \sigma \in \Upsilon_\sigma$.

Again by \eqref{E star formula} and Lemma \ref{sequence lemma}, for any $\tau \coloneqq (\tau_k)_{k \in \mathbb{N}} \in \Upsilon_\sigma$, we have
\begin{align*}
\mathcal{E}^*(\tau)
&= \mathcal{E}^*(\sigma) - \frac{n}{\sigma_1 \sigma_2 \dotsm \sigma_{n} \tau_{n+1}} \\
&\qquad + \frac{1}{\sigma_1 \sigma_2 \dotsm \sigma_{n}} \sum_{j=1}^\infty \left( \frac{n+(2j-1)}{\tau_{n+1} \dotsm \tau_{n+2j}} - \frac{n+2j}{\tau_{n+1} \dotsm \tau_{n+(2j+1)}} \right)  \\
&\geq \mathcal{E}^*(\sigma) - \frac{n}{\sigma_1 \sigma_2 \dotsm \sigma_{n} \tau_{n+1}}
\geq \mathcal{E}^*(\sigma) - \frac{n}{\sigma_1 \sigma_2 \dotsm \sigma_{n} (\sigma_n+1)} 
= \mathcal{E}^*(\sigma) - n \cdot \lambda (I_\sigma),
\end{align*}
where we used $\tau_{n+1} > \sigma_n$ for the second inequality and \eqref{length of I sigma} for the last equality. Notice that the equalities hold if and only if $\tau_{n+1} = \sigma_{n}+1$ and $\tau_{k} = \infty$ for all $k \geq n+2$, i.e., if and only if $\tau = \widehat{\sigma}' \in \Upsilon_\sigma$. Therefore, $\mathcal{E}^*(\tau) \geq \mathcal{E}^*(\sigma) - n \cdot \lambda (I_\sigma)$ for any $\tau \in \Upsilon_\sigma$, and the minimum is attained when $\tau = \widehat{\sigma}' \in \Upsilon_\sigma$.

(ii)
The proof is similar to that of part (i), so we omit the details. 
\end{proof}

Using the preceding lemma, we can describe the supremum and the infimum of $\mathcal{E} \colon I \to \mathbb{R}$ on each fundamental interval $I_\sigma$. We show that approaching the left endpoint from the right yields the infimum, while approaching the right endpoint from the left yields the supremum. (See Proposition \ref{I sigma} for the left and right endpoints of the fundamental intervals.)

\begin{lemma} \label{sup inf in fundamental interval}
Let $n \in \mathbb{N}$ and $\sigma \in \Sigma_n$. Then the following hold.
\begin{enumerate}[label=(\roman*)]
\item
If $n$ is odd, we have
\[
\sup_{t \in I_\sigma} \mathcal{E}(t) = \lim_{t \to (\varphi(\sigma))^-} \mathcal{E}(t)
\quad \text{and} \quad
\inf_{t \in I_\sigma} \mathcal{E}(t) = \lim_{t \to (\varphi(\widehat{\sigma}))^+} \mathcal{E}(t).
\]

\item
If $n$ is even, we have
\[
\sup_{t \in I_\sigma} \mathcal{E}(t) = \lim_{t \to (\varphi(\widehat{\sigma}))^-} \mathcal{E}(t)
\quad \text{and} \quad
\inf_{t \in I_\sigma} \mathcal{E}(t) = \lim_{t \to (\varphi(\sigma))^+} \mathcal{E}(t).
\]
\end{enumerate}
\end{lemma}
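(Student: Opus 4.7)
The plan is to transfer the problem to the symbolic side via $\mathcal{E} = \mathcal{E}^{*} \circ f$ (Lemma \ref{commutative diagram}), apply Lemma \ref{sup inf in Upsilon sigma} to extract the extrema of $\mathcal{E}^{*}$ on $\Upsilon_{\sigma}$, and then identify those extrema with the prescribed one-sided limits using the (dis)continuity of $f$ (Lemmas \ref{f is continuous at irrational} and \ref{f is discontinuous at rational}) together with continuity of $\mathcal{E}^{*}$ (Lemma \ref{E star is continuous}).

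The first step is to establish
\[
\sup_{t \in I_{\sigma}} \mathcal{E}(t) = \max_{\tau \in \Upsilon_{\sigma}} \mathcal{E}^{*}(\tau) \quad \text{and} \quad \inf_{t \in I_{\sigma}} \mathcal{E}(t) = \min_{\tau \in \Upsilon_{\sigma}} \mathcal{E}^{*}(\tau).
\]
The inclusion $f(I_{\sigma}) \subseteq \Upsilon_{\sigma}$ gives the $\leq$ (respectively $\geq$) direction; the reverse inequalities follow by approximating an arbitrary $\tau \in \Upsilon_{\sigma}$ by a sequence in $f(I_{\sigma})$ via Lemma \ref{f image of fundamental interval is dense} and invoking continuity of $\mathcal{E}^{*}$. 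Suppose $n$ is odd. By Lemma \ref{sup inf in Upsilon sigma}(i) the two extrema are $\mathcal{E}^{*}(\sigma)$ and $\mathcal{E}^{*}(\widehat{\sigma}')$, where $\widehat{\sigma}' \coloneqq (\sigma_{1}, \dotsc, \sigma_{n}, \sigma_{n}+1, \infty, \infty, \dotsc)$ is the minimizer exhibited in that proof.

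It then remains to compute the two one-sided limits. For $\lim_{t \to \varphi(\sigma)^{-}} \mathcal{E}(t)$, Proposition \ref{I sigma} places $\varphi(\sigma)$ at the right end of $I_{\sigma}$, so $t \in I_{\sigma}$ eventually as $t \to \varphi(\sigma)^{-}$. If $\varphi(\sigma) \in E'$, Lemma \ref{f is discontinuous at rational} (taking our $\sigma$ as the first member of the preimage pair) yields $f(t) \to \sigma$; the only other possibility is $\sigma = (1)$, $\varphi(\sigma) = 1$, handled by Lemma \ref{f is continuous at irrational}. Continuity of $\mathcal{E}^{*}$ then gives $\lim_{t \to \varphi(\sigma)^{-}} \mathcal{E}(t) = \mathcal{E}^{*}(\sigma)$. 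For $\lim_{t \to \varphi(\widehat{\sigma})^{+}} \mathcal{E}(t)$, I first note that $\widehat{\sigma} \in \Sigma_{\realizable}$ (for $n \geq 2$, $\sigma_{n-1} < \sigma_{n}$ forces $\widehat{\sigma}_{n-1} + 1 < \widehat{\sigma}_{n}$; for $n=1$ the realizability condition is vacuous), whereupon Proposition \ref{inverse image of phi}(i) produces $\varphi^{-1}(\varphi(\widehat{\sigma})) = \{\widehat{\sigma}, \widehat{\sigma}'\}$. Proposition \ref{I sigma} places $I_{\widehat{\sigma}}$ to the left of $\varphi(\widehat{\sigma})$ (odd $n$), so $t \to \varphi(\widehat{\sigma})^{+}$ occurs outside $I_{\widehat{\sigma}}$, and Lemma \ref{f is discontinuous at rational} gives $f(t) \to \widehat{\sigma}'$, hence $\mathcal{E}(t) \to \mathcal{E}^{*}(\widehat{\sigma}')$. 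Part (ii) is entirely parallel after exchanging the roles of the two endpoints, as dictated by the reversed orientation in Proposition \ref{I sigma}.

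The main obstacle is the case-analysis bookkeeping: realizability of $\sigma$ (and of $\widehat{\sigma}$), the parity of $n$, and on which side of each endpoint the neighbouring fundamental intervals sit. What makes this manageable is the rigid geometric picture: each endpoint's $\varphi$-preimage has at most two sequences, and Proposition \ref{I sigma} unambiguously locates the associated fundamental intervals, so in each one-sided approach exactly one of the two candidates for $\lim f(t)$ is selected.
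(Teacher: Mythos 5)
Your proof is correct and follows essentially the same route as the paper: transfer to the cylinder set via $\mathcal{E} = \mathcal{E}^* \circ f$ together with density of $f(I_\sigma)$ in $\Upsilon_\sigma$ and continuity of $\mathcal{E}^*$, extract the extrema from Lemma~\ref{sup inf in Upsilon sigma}, and identify the one-sided limits via Lemma~\ref{f is discontinuous at rational} (the paper packages the infimum side through Lemma~\ref{discontinuity lemma}, which is derived from the same ingredients). Your explicit handling of the boundary case $\sigma = (1)$, where $\varphi(\sigma) = 1 \notin E'$ and one must instead appeal to Lemma~\ref{f is continuous at irrational}, is a small improvement in rigor over the paper's wording, which invokes Lemma~\ref{f is discontinuous at rational} without remarking on this exceptional case.
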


\begin{proof}
(i)
Assume $n$ is odd. Put $\sigma \coloneqq (\sigma_k)_{k \in \mathbb{N}} \in \Sigma_n$. Then $\widehat{\sigma} = (\sigma_1, \dotsc, \sigma_{n-1}, \sigma_n+1) \in \Sigma_n$, and by Proposition \ref{inverse image of phi}(i), we have $\varphi (\widehat{\sigma}) = \varphi (\widehat{\sigma}')$, where $\widehat{\sigma}' \coloneqq (\sigma_1, \dotsc, \sigma_{n-1}, \sigma_n, \sigma_n+1) \in \Sigma_{n+1}$ with $\widehat{\sigma} \in \Sigma_{\realizable}$ and $\widehat{\sigma}' \in \Sigma \setminus \Sigma_{\realizable}$.

By using $\mathcal{E} = \mathcal{E}^* \circ f$ (Lemma \ref{commutative diagram}), $\overline{f(I_\sigma)} = \Upsilon_\sigma$ (Lemma \ref{f image of fundamental interval is dense}), and the continuity of $\mathcal{E}^*$ (Lemma \ref{E star is continuous}), we find that
\begin{align}
\sup_{t \in I_\sigma} \mathcal{E}(t) = \sup_{f(t) \in f(I_\sigma)} \mathcal{E}^*(f(t)) = \sup_{\tau \in \Upsilon_\sigma} \mathcal{E}^*(\tau) &= \max_{\tau \in \Upsilon_\sigma} \mathcal{E}^*(\tau) = \mathcal{E}^*(\sigma), \label{sup in I sigma}  \\
\inf_{t \in I_\sigma} \mathcal{E}(t) = \inf_{f(t) \in f(I_\sigma)} \mathcal{E}^*(f(t)) = \inf_{\tau \in \Upsilon_\sigma} \mathcal{E}^*(\tau) &= \min_{\tau \in \Upsilon_\sigma} \mathcal{E}^*(\tau) = \mathcal{E}^*(\widehat{\sigma}') \label{inf in I sigma},
\end{align}
where the last two equalities for both \eqref{sup in I sigma} and \eqref{inf in I sigma} follow from Lemma \ref{sup inf in Upsilon sigma} and its proof. 

For the supremum, notice that, by Proposition \ref{I sigma}, $t \to (\varphi (\sigma))^-$ if and only if $t \to \varphi (\sigma)$ with $t \in I_\sigma \setminus \{ \varphi (\sigma) \}$. Then by Lemma \ref{commutative diagram}, the continuity of $\mathcal{E}^*$ (Lemma \ref{E star is continuous}), and Lemma \ref{f is discontinuous at rational}, we obtain
\[
\lim_{t \to (\varphi (\sigma))^-} \mathcal{E}(t) = \lim_{\substack{t \to \varphi (\sigma) \\ t \in I_{\sigma} \setminus \{ \varphi (\sigma) \}}} \mathcal{E}(t) = \mathcal{E}^* \bigg( \lim_{\substack{t \to \varphi (\sigma) \\ t \in I_{\sigma} \setminus \{ \varphi (\sigma) \}}} f(t) \bigg)  =  \mathcal{E}^*(\sigma).
\]
Combining this with \eqref{sup in I sigma} gives the result.

For the infimum, notice that, by Proposition \ref{I sigma}, $t \to (\varphi (\widehat{\sigma}))^+$ if and only if $t \to \varphi (\widehat{\sigma})$ with $t \not \in I_{\widehat{\sigma}}$. Hence Lemma \ref{discontinuity lemma} tells us that
\[
\lim_{t \to (\varphi(\widehat{\sigma}))^+} \mathcal{E}(t) = \lim_{\substack{t \to \varphi(\widehat{\sigma}) \\ t \not \in I_{\widehat{\sigma}}}} \mathcal{E}(t) = \mathcal{E}^* (\widehat{\sigma}').
\]
Combining this with \eqref{inf in I sigma} gives the result.

(ii)
The proof is similar to that of part (i), so we omit the details. 
\end{proof}

The following lemma is an analogue of \cite[Lemma~2.6]{SW07}, where the error-sum function of L{\"u}roth series is discussed. This lemma will serve as the key ingredient in finding a suitable covering for the graph of $\mathcal{E}(x)$ in Section \ref{Section 4}.

\begin{lemma} \label{fourth lemma}
Let $n \in \mathbb{N}$ and $\sigma \in \Sigma_n$. Then
\[
\sup_{t,u \in I_\sigma} |\mathcal{E}(t) - \mathcal{E}(u)| = n \cdot \lambda (I_\sigma).
\]
\end{lemma}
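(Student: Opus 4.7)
The plan is to reduce the two-sided supremum $\sup_{t,u\in I_\sigma}|\mathcal{E}(t)-\mathcal{E}(u)|$ to the single-variable oscillation $\sup_{t\in I_\sigma}\mathcal{E}(t)-\inf_{t\in I_\sigma}\mathcal{E}(t)$, and then identify each of the two extrema via Lemma \ref{sup inf in fundamental interval}, which already describes them as one-sided limits of $\mathcal{E}$ at the endpoints of $I_\sigma$. In fact, the proof of Lemma \ref{sup inf in fundamental interval} gives explicit values: writing $\widehat{\sigma}'\coloneqq(\sigma_1,\dotsc,\sigma_{n-1},\sigma_n,\sigma_n+1)\in\Sigma_{n+1}$, one has, when $n$ is odd,
\[
\sup_{t\in I_\sigma}\mathcal{E}(t)=\mathcal{E}^*(\sigma),\qquad \inf_{t\in I_\sigma}\mathcal{E}(t)=\mathcal{E}^*(\widehat{\sigma}'),
\]
and the roles are reversed when $n$ is even. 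So the whole problem boils down to computing $|\mathcal{E}^*(\sigma)-\mathcal{E}^*(\widehat{\sigma}')|$.

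This last computation I would do directly from the closed form \eqref{E star formula}. Since $\sigma_{k+1}=\infty$ for $k\geq n$, the formula gives $\mathcal{E}^*(\sigma)=\sum_{k=1}^{n-1}(-1)^k k/(\sigma_1\dotsm\sigma_{k+1})$, while $\widehat{\sigma}'$ agrees with $\sigma$ in its first $n$ coordinates and has one extra finite entry $\sigma_n+1$ in position $n+1$, producing exactly one additional term:
\[
\mathcal{E}^*(\widehat{\sigma}')=\mathcal{E}^*(\sigma)+\frac{(-1)^n n}{\sigma_1\dotsm\sigma_n(\sigma_n+1)}.
\]
Taking absolute values and invoking the length formula \eqref{length of I sigma} yields $|\mathcal{E}^*(\sigma)-\mathcal{E}^*(\widehat{\sigma}')|=n/(\sigma_1\dotsm\sigma_n(\sigma_n+1))=n\cdot\lambda(I_\sigma)$, independent of the parity of $n$. (Alternatively, this equality is already contained in Lemma \ref{sup inf in Upsilon sigma}, since $\widehat{\sigma}'$ is exactly the extremizer identified there, so one can simply quote it.)

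Putting these pieces together:
\[
\sup_{t,u\in I_\sigma}|\mathcal{E}(t)-\mathcal{E}(u)|=\sup_{t\in I_\sigma}\mathcal{E}(t)-\inf_{t\in I_\sigma}\mathcal{E}(t)=|\mathcal{E}^*(\sigma)-\mathcal{E}^*(\widehat{\sigma}')|=n\cdot\lambda(I_\sigma),
\]
which is the claim. There is no genuine obstacle here — the work has all been done in Lemmas \ref{sup inf in Upsilon sigma} and \ref{sup inf in fundamental interval}. The only minor subtlety is that $\mathcal{E}$ itself need not attain its supremum or infimum on $I_\sigma$ (it may approach them only as a limit at an endpoint, possibly an excluded endpoint), so one should phrase the first equality above carefully as a supremum/infimum rather than as max/min, and use the fact that the supremum over pairs in a set always equals $\sup-\inf$ for a real-valued function.
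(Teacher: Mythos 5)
Your argument is essentially the same as the paper's: both reduce $\sup_{t,u\in I_\sigma}|\mathcal{E}(t)-\mathcal{E}(u)|$ to $\sup_{I_\sigma}\mathcal{E}-\inf_{I_\sigma}\mathcal{E}$, translate that to $\max_{\Upsilon_\sigma}\mathcal{E}^*-\min_{\Upsilon_\sigma}\mathcal{E}^*$ via the density and continuity machinery already established, and then invoke Lemma~\ref{sup inf in Upsilon sigma} (which you also reprove by a short computation with \eqref{E star formula}) to get $n\cdot\lambda(I_\sigma)$. The only cosmetic difference is that you route through Lemma~\ref{sup inf in fundamental interval} explicitly, while the paper cites the displayed identities \eqref{sup in I sigma}--\eqref{inf in I sigma} from that lemma's proof directly.
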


\begin{proof}
By Lemmas \ref{commutative diagram}, \ref{f image of fundamental interval is dense}, and \ref{E star is continuous}, we have, as in \eqref{sup in I sigma} and \eqref{inf in I sigma},
\[
\sup_{t,u \in I_\sigma} |\mathcal{E}(t) - \mathcal{E}(u)|
= \sup_{t \in I_\sigma} \mathcal{E}(t) - \inf_{t \in I_\sigma} \mathcal{E}(t) 
= \max_{\tau \in \Upsilon_\sigma} \mathcal{E}^*(\tau) - \min_{\tau \in \Upsilon_\sigma} \mathcal{E}^*(\tau).
\]
Notice that the last term equals $n \cdot \lambda (I_\sigma)$ by Lemma \ref{sup inf in Upsilon sigma}. This proves the lemma.
\end{proof}

One might be tempted to say that $\mathcal{E} \colon I \to \mathbb{R}$ is fairly regular in the sense of $\lambda$-almost everywhere continuity (Theorem \ref{continuity theorem}). However, the following theorem tells us that $\mathcal{E}$ is not well-behaved in the bounded variation sense.

\begin{theorem} \label{bounded variation theorem}
The error-sum function $\mathcal{E} \colon I \to \mathbb{R}$ is not of bounded variation.
\end{theorem}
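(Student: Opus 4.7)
My plan is to show that the total variation of $\mathcal{E}$ on $[0,1]$ is at least $n$ for every $n \in \mathbb{N}$, whence it must be infinite. The main tool is Lemma~\ref{fourth lemma}, which says that on any fundamental interval $I_\sigma$ of order $n$ the oscillation of $\mathcal{E}$ equals $n \cdot \lambda(I_\sigma)$.

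First I would observe that for each $n \in \mathbb{N}$ the family $\{I_\sigma : \sigma \in \Sigma_n\}$ partitions $[0,1]$ modulo a countable (hence $\lambda$-null) set, namely the set of points whose Pierce expansion has length strictly less than $n$. This is immediate from the definition $I_\sigma = f^{-1}(\Upsilon_\sigma)$ together with Proposition~\ref{basic facts}(i). In particular, $\sum_{\sigma \in \Sigma_n} \lambda(I_\sigma) = 1$.

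Now fix $n \in \mathbb{N}$ and $\delta > 0$. I would choose finitely many $\sigma^{(1)}, \dotsc, \sigma^{(M)} \in \Sigma_n$, indexed so that their fundamental intervals appear in increasing order in $[0,1]$, such that $\sum_{i=1}^M \lambda(I_{\sigma^{(i)}}) > 1 - \delta$. By Lemma~\ref{fourth lemma} I would pick $t_i < u_i$ in $I_{\sigma^{(i)}}$ with
\[
|\mathcal{E}(t_i) - \mathcal{E}(u_i)| \geq n \lambda(I_{\sigma^{(i)}}) - \delta/M.
\]
Since the $I_{\sigma^{(i)}}$ are pairwise disjoint and ordered along $[0,1]$, the collection $\{0, t_1, u_1, \dotsc, t_M, u_M, 1\}$ (with any coincidences consolidated) forms a partition $0 = x_0 < x_1 < \dotsb < x_N = 1$ in which $t_i$ and $u_i$ are consecutive for each $i$. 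The corresponding variation sum satisfies
\[
\sum_{j=0}^{N-1} |\mathcal{E}(x_{j+1}) - \mathcal{E}(x_j)| \geq \sum_{i=1}^M |\mathcal{E}(t_i) - \mathcal{E}(u_i)| \geq n(1-\delta) - \delta,
\]
so letting $\delta \to 0^+$ yields total variation at least $n$, and letting $n \to \infty$ concludes the proof.

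The argument is essentially direct from Lemma~\ref{fourth lemma}; the most delicate step is the first one, verifying that $\{I_\sigma : \sigma \in \Sigma_n\}$ covers $[0,1]$ up to a $\lambda$-null set, and even this is an easy consequence of the definitions. I do not anticipate a substantial obstacle.
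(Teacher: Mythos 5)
Your proof is correct and follows essentially the same route as the paper: fix an order $n$, use the fact that the fundamental intervals of order $n$ are disjoint and tile $I$ up to a null set (so $\sum_{\sigma\in\Sigma_n}\lambda(I_\sigma)=1$), and then apply Lemma~\ref{fourth lemma} to conclude the variation is at least $n$. The paper's version is terser, passing directly from $\sum_{\sigma\in\Sigma_n}\sup_{t,u\in I_\sigma}|\mathcal{E}(t)-\mathcal{E}(u)|$ to $V_I(\mathcal{E})$ by citing disjointness, while you explicitly construct an approximating finite partition with $\delta$-slack; this is a more careful write-up of the same idea, not a different argument.
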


\begin{proof}
Let $V_I (\mathcal{E})$ denote the total variation of $\mathcal{E}$ on $I$. Let $n \in \mathbb{N}$. We consider the collection $\mathcal{I} \coloneqq \{ I_\sigma : \sigma \in \Sigma_n \}$, i.e., the collection of all fundamental intervals of order $n$. Note that $\sum_{\sigma \in \Sigma_n} \lambda (I_\sigma) = 1$. Then, by Lemma \ref{fourth lemma}, we have
\[
n = n \sum_{\sigma \in \Sigma_n} \lambda (I_\sigma) = \sum_{\sigma \in \Sigma_n} \sup_{t, u \in I_\sigma} |\mathcal{E}(t) - \mathcal{E}(u)| \leq V_I(\mathcal{E}),
\]
where the inequality follows from the fact that the $I_\sigma \in \mathcal{I}$ are mutually disjoint intervals. Since $n \in \mathbb{N}$ is arbitrary, it follows that $V_I(\mathcal{E})$ is not finite. This completes the proof.
\end{proof}

We prove that $\mathcal{E} \colon I \to \mathbb{R}$ enjoys an intermediate value property in some sense, which is an analogue of \cite[Theorem~4.3]{RP00}. A similar result can also be found in \cite[Theorem~2.5]{SMZ06}. In fact, every result aforementioned is a consequence of the following theorem.

\begin{theorem} \label{IVT theorem}
Suppose that $g \colon J \to \mathbb{R}$ is a function on an interval $J \subseteq \mathbb{R}$ satisfying the following conditions.
\begin{enumerate}[label=(\roman*)]
\item
There exists a subset $D$ of the interior of $J$ such that $g$ is continuous on $J \setminus D$.
\item
For any $x \in D$, $g$ is either left-continuous or right-continuous at $x$ with $\lim_{t \to x^-} g(t) > \lim_{t \to x^+} g(t)$.
\end{enumerate}
Let $a,b \in J$ with $a<b$. If $g(a) < y < g(b)$, then there exists an $x \in (a,b) \setminus D$ such that $g(x) = y$.
\end{theorem}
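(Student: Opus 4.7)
I would use a modified bisection argument via suprema. Set $S := \{ t \in [a,b] : g(t) < y \}$ and let $x_0 := \sup S$. The plan is to show $x_0 \in (a,b) \setminus D$ and $g(x_0) = y$ by three successive steps: locating $x_0$ strictly inside $(a,b)$, ruling out $g(x_0) < y$, and then ruling out $x_0 \in D$. Note that at every point $x \in J$, both one-sided limits $L(x) := \lim_{t \to x^-} g(t)$ and $R(x) := \lim_{t \to x^+} g(t)$ exist: at points of $D$ by hypothesis (ii), and elsewhere by continuity of $g$ at such points from hypothesis (i).

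First, I would show $x_0 \in (a,b)$. Since $a \in S$ and $b \notin S$, trivially $a \le x_0 \le b$. To get $x_0 > a$, note that if $a \notin D$ then continuity of $g$ at $a$ together with $g(a) < y$ gives a right neighborhood of $a$ contained in $S$. If $a \in D$ and $g$ is right-continuous at $a$, the same conclusion holds. If $a \in D$ and $g$ is only left-continuous at $a$, then $L(a) = g(a) < y$, so the hypothesis $L(a) > R(a)$ forces $R(a) < y$, and hence $g < y$ on a right neighborhood of $a$; again $x_0 > a$. A symmetric case analysis at $b$ using $g(b) > y$ yields $x_0 < b$.

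Second, I would argue $g(x_0) \ge y$ by contradiction. If $g(x_0) < y$, then exactly the same case analysis applied at $x_0$ (splitting by whether $x_0 \in D$, and if so whether $g$ is left- or right-continuous there) produces points $t > x_0$ arbitrarily close to $x_0$ with $g(t) < y$, contradicting $x_0 = \sup S$.

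Third, I would rule out $x_0 \in D$. Since $g(x_0) \ge y$, we have $x_0 \notin S$, so there is a sequence $t_n \uparrow x_0$ with $t_n \in S$; passing to the left-limit yields $L(x_0) \le y$. For any $t \in (x_0, b]$, we have $g(t) \ge y$, so letting $t \downarrow x_0$ gives $R(x_0) \ge y$. If $x_0 \in D$, then by (ii) $L(x_0) > R(x_0)$, contradicting $L(x_0) \le y \le R(x_0)$. Hence $x_0 \notin D$, so $g$ is continuous at $x_0$ and $g(x_0) = L(x_0) = R(x_0) = y$. The main obstacle is the asymmetric case analysis at points of $D$ in the first two steps; the key observation making it work is that the strict inequality $L > R$ (rather than merely $L \ne R$) rules out the pathological configurations in which $g$ could straddle the value $y$ only through a jump.
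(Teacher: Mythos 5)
Your proof is correct and follows the same basic strategy as the paper: both set $S \coloneqq \{t \in [a,b] : g(t) < y\}$, take $t_0 \coloneqq \sup S$, and use a three-way case analysis on the continuity type at $a$ (respectively $b$) to show $t_0 \in (a,b)$. Where you depart slightly is in the final phase. The paper first argues $t_0 \notin D$ by extracting sequences $a_n \in S$ with $a_n \le t_0$ and $b_n \notin S$ with $b_n \ge t_0$, both converging to $t_0$, and pushing through a chain of inequalities; it then concludes $g(t_0)=y$ from continuity. You instead first prove $g(t_0) \ge y$ (by reusing the same case analysis to show that $g(t_0)<y$ would produce points of $S$ to the right of $t_0$), and only then observe that $L(t_0) \le y \le R(t_0)$, which instantly contradicts the jump condition $L>R$ if $t_0\in D$. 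This reorganization is a genuine, if modest, improvement: by securing $t_0 \notin S$ before passing to one-sided limits, you guarantee the approximating sequence stays strictly to the left of $t_0$, sidestepping the edge case in which $a_n = t_0$ (where, in the paper's right-continuity subcase, the inequality $\lim g(a_n) > g(t_0)$ would not follow without an extra remark). Both arguments are valid; yours just makes the bookkeeping around the supremum a little cleaner.
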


\begin{proof}
Consider the set
\[
S \coloneqq \{ t \in [a,b] : g (t) < y \},
\]
and let $t_0 \coloneqq \sup S$. Since $g(a) < y$ by assumption, we have $a \in S$, and hence $S$ is non-empty. So $t_0 \neq - \infty$ and $t_0 \in [a,b]$. Our aim is to show that $t_0$ is a desired root, that is, $g(t_0) = y$ and $t_0 \in (a,b) \setminus D$.

We claim that $t_0 > a$. We consider three cases depending on the continuity at $a$.

{\sc Case I}.
Assume $a \in J \setminus D$, so that $g$ is continuous at $a$ by condition (i). Then, since $g(a) < y$, there is an $\eta_1 \in (0, b-a)$ such that $g(t) < y$ for all $t \in (a-\eta_1, a+\eta_1) \cap J$. So $t_0 \geq a+\eta_1$ and hence $t_0 > a$.

{\sc Case II}.
Assume that $a \in D$ and $g$ is left-continuous at $a$. Then $\lim_{t \to a^+} g(t) < \lim_{t \to a^-} g(t) = g(a) < y$ by condition (ii) and assumption. 
By definition of the right-hand limit, there exists an $\eta_2 \in (0, b-a)$ such that $g (t) < y$ for all $t \in (a, a+ \eta_2)$. So $t_0 \geq a + \eta_2$ and hence $t_0 > a$.

{\sc Case III}.
Assume that $a \in D$ and $g$ is right-continuous at $a$. Then $\lim_{t \to a^+} g(t) = g(a) < y$ by assumption. 
By definition of the right-hand limit, there exists an $\eta_3 \in (0, b-a)$ such that $g (t) < y$ for all $t \in (a, a + \eta_3)$. So $t_0 \geq a + \eta_3$ and hence $t_0 > a$.

By a similar argument, which we omit here, we can show that $t_0 < b$.

We have shown above that $t_0 \in (a,b)$. It remains to prove that $g (t_0) = y$ with $t_0 \not \in D$. We show first that $t_0 \not \in D$. Suppose $t_0 \in D$ to argue by contradiction. Since $t_0 = \sup S$ we can find a sequence $( a_n )_{n \in \mathbb{N}}$ in $S$ such that $a_n \leq t_0$ for each $n \in \mathbb{N}$ and $a_n \to t_0$ as $n \to \infty$. (We can choose $a_n \in S$ such that $t_0 - 1/n < a_n \leq t_0$ for each $n \in \mathbb{N}$.) Similarly, we can find a sequence $( b_n )_{n \in \mathbb{N}}$ in $[a,b] \setminus S$ such that $b_n \geq t_0$ for each $n \in \mathbb{N}$ and $b_n \to t_0$ as $n \to \infty$. Then, by our choice of two sequences, $g(a_n) < y$ and $g(b_n) \geq y$ for all $n \in \mathbb{N}$. Now note that since $t_0 \in D$, $g$ is either left-continuous or right-continuous at $t_0$ by condition (ii). If $g$ is left-continuous at $t_0$, then by condition (ii), we have
\[
y \geq \lim_{n \to \infty} g(a_n) = g (t_0) > \lim_{n \to \infty} g(b_n) \geq y,
\]
which is a contradiction. If $g$ is right-continuous at $t_0$, then by condition (ii), we have
\[
y \geq \lim_{n \to \infty} g (a_n) > g (t_0) = \lim_{n \to \infty} g (b_n) \geq y,
\]
which is a contradiction. This proves that $t_0 \not \in D$, as desired. 

Since $t_0 \in J \setminus D$, we know from condition (i) that $g$ is continuous at $t_0$. Hence, $g(t_0) = y$ by definitions of $S$ and $t_0$. For, if not, say $g(t_0) < y$, we can find a $\delta \in (0, \min \{ t_0-a, b-t_0 \})$ such that $g(t)<y$ on the interval $(t_0-\delta, t_0+\delta)$, which contradicts $t_0 = \sup S$. Similarly, $g(t_0) > y$ gives a contradiction. This completes the proof that $t_0 \in (a,b) \setminus D$ is a root of $g(x) = y$ we were seeking.
\end{proof}

\begin{remark}
In Theorem \ref{IVT theorem}, the assumption $g(a) < y < g(b)$ for $a<b$ is stricter than that of the standard Intermediate Value Theorem in $\mathbb{R}$. This additional assumption is necessary because at every discontinuity, $g$ has a sudden drop therein. To be precise, for every $x \in D$, we have $\lim_{t \to x^-}g(t) > \lim_{t \to x^+} g(t)$ by condition (ii). For the same phenomenon for $\mathcal{E} \colon I \to \mathbb{R}$, see Theorem \ref{discontinuity theorem} and equations \eqref{right jump formula} and \eqref{left jump formula} therein.
\end{remark}

\begin{corollary} \label{IVT}
Let $a,b \in I$ with $a<b$. If $\mathcal{E}(a) < y < \mathcal{E}(b)$, then there exists an $x \in (a,b) \setminus E$ such that $\mathcal{E}(x) = y$.
\end{corollary}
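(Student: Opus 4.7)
The plan is to obtain this corollary as a direct application of Theorem \ref{IVT theorem}, taking $g = \mathcal{E}$, $J = I = [0,1]$, and $D = E' = \mathbb{Q}\cap (0,1)$. The work reduces to checking the two hypotheses of Theorem \ref{IVT theorem} and then reconciling the conclusion $x \in (a,b) \setminus D = (a,b) \setminus E'$ with the stated conclusion $x \in (a,b) \setminus E$.

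First, $E'$ is clearly a subset of the interior $(0,1)$ of $I$. By Theorem \ref{continuity theorem}, $\mathcal{E}$ is continuous on $I \setminus E'$, which gives condition (i). For condition (ii), fix any $x \in E'$; by Theorem \ref{discontinuity theorem}, $\mathcal{E}$ is either left-continuous (when the length $n$ of the Pierce expansion of $x$ is odd) or right-continuous (when $n$ is even) at $x$. In either case, the jump formulas \eqref{right jump formula} and \eqref{left jump formula} give
\[
\lim_{t\to x^-}\mathcal{E}(t) - \lim_{t\to x^+}\mathcal{E}(t) = \frac{1}{d_1(x)\dotsm d_{n-1}(x)(d_n(x)-1)d_n(x)} > 0,
\]
so $\lim_{t\to x^-}\mathcal{E}(t) > \lim_{t\to x^+}\mathcal{E}(t)$, confirming condition (ii). (This is exactly the observation recorded just after Theorem \ref{discontinuity theorem}.)

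Applying Theorem \ref{IVT theorem} then yields an $x \in (a,b) \setminus E'$ with $\mathcal{E}(x) = y$. Finally, since $(a,b)$ is an open subinterval of $I = [0,1]$, we have $\{0,1\} \cap (a,b) = \varnothing$, so $(a,b) \setminus E' = (a,b) \setminus E$, giving the desired $x \in (a,b) \setminus E$.

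There is essentially no obstacle here since the heavy lifting has already been done in Theorems \ref{continuity theorem} and \ref{discontinuity theorem} together with the general Theorem \ref{IVT theorem}; the only point to be careful about is the bookkeeping between $E$ and $E'$, which is immediate once one notes that the endpoints $0,1$ cannot lie in the open interval $(a,b)$.
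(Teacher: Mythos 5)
Your proof is correct and takes exactly the same route as the paper: apply Theorem \ref{IVT theorem} with $J = I$ and $D = E'$, verify the two hypotheses via Theorems \ref{continuity theorem} and \ref{discontinuity theorem}, and then note that $(a,b)\setminus E' = (a,b)\setminus E$ since $0,1\notin(a,b)$.
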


\begin{proof}
By Theorems \ref{continuity theorem} and \ref{discontinuity theorem}, $\mathcal{E}$ satisfies the two conditions of Theorem \ref{IVT theorem} with $J \coloneqq I$ and $D \coloneqq E'$. Since $(a,b) \setminus E = (a,b) \setminus E'$, the result follows from Theorem \ref{IVT theorem}.
\end{proof}

Using Theorem \ref{IVT theorem}, we can prove the intermediate value property of $P \colon \mathbb{R} \to \mathbb{R}$, the error-sum function of the regular continued fraction expansion, defined as in Section \ref{Section 1}. Compare the following corollary with \cite[Theorem~4.3]{RP00}, where the authors considered $P|_I$, the restriction of $P$ to $I$.

\begin{corollary}
Let $a, b \in \mathbb{R}$ with $a<b$. If $P (a) < y < P (b)$, then there exists an $x \in (a,b) \setminus \mathbb{Q}$ such that $P (x) = y$.
\end{corollary}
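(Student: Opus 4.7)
The plan is to invoke Theorem~\ref{IVT theorem} directly, with $J \coloneqq \mathbb{R}$, $D \coloneqq \mathbb{Q}$, and $g \coloneqq P$. Once the two hypotheses of Theorem~\ref{IVT theorem} are verified for $P$, the conclusion $x \in (a,b) \setminus \mathbb{Q}$ with $P(x) = y$ is immediate, since $(a,b) \setminus D = (a,b) \setminus \mathbb{Q}$.

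For hypothesis (i), we need $P$ to be continuous at every irrational number. This is a classical fact about the error-sum function of the regular continued fraction expansion and is established in \cite{RP00}: the $n$th term $q_n(x)(x - p_n(x)/q_n(x))$ of the defining series satisfies $|q_n(x)(x - p_n(x)/q_n(x))| \leq 1/q_{n+1}(x)$, which gives uniform convergence of the series on a suitable neighborhood of any irrational, whereas the individual convergents $p_n/q_n$ vary continuously with $x$ at irrationals.

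For hypothesis (ii), we need that at every rational $x$, the function $P$ is one-sided continuous with $\lim_{t \to x^-} P(t) > \lim_{t \to x^+} P(t)$. This is the continued fraction analogue of Theorem~\ref{discontinuity theorem}, proved in \cite{RP00}. The mechanism is the same as in our Pierce setting: every rational $x$ admits exactly two regular continued fraction representations, $[a_0; a_1, \dotsc, a_n]$ and $[a_0; a_1, \dotsc, a_n - 1, 1]$ (assuming $a_n \geq 2$); these produce the two one-sided limits of $P$ at $x$, one of which agrees with $P(x)$; a direct computation gives the parity-dependent sign of the jump and shows that the left-hand limit always exceeds the right-hand limit.

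A small bookkeeping point is that \cite{RP00} works with the restriction $P|_I$, whereas we need the properties on all of $\mathbb{R}$. This extension is automatic from the period-$1$ relation $P(x) = P(\{x\})$ for all $x \in \mathbb{R}$: writing $x = m + y$ with $m \in \mathbb{Z}$ and $y \in [0,1)$, one has $a_k(x) = a_k(y)$ for $k \geq 1$, $q_n(x) = q_n(y)$ for $n \geq 0$, and $p_n(x) = m q_n(y) + p_n(y)$ for $n \geq 1$, so each term $q_n(x)(x - p_n(x)/q_n(x))$ equals the corresponding term in $P(y)$, while the $n = 0$ term equals $x - \lfloor x \rfloor = y$. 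I do not expect any real obstacle here; the main work has been done by Ridley--Petruska, and our contribution is simply to package their properties of $P$ into the hypotheses of Theorem~\ref{IVT theorem}.
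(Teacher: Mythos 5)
Your proof is correct and follows essentially the same route as the paper: apply Theorem~\ref{IVT theorem} with $J=\mathbb{R}$, $D=\mathbb{Q}$, $g=P$, and verify hypotheses (i) and (ii) from the Ridley--Petruska results (the paper cites their Lemma~1.1 and Theorem~2.3 for the one-sided jump structure at rationals and for continuity at irrationals, and observes $q_n(x)>0$ to get $\lim_{t\to x^-}P(t)>\lim_{t\to x^+}P(t)$). Your additional remark about the period-one relation $P(x)=P(\{x\})$ is accurate and a sensible safety net for transferring properties from $I$ to $\mathbb{R}$, though the paper does not spell this out, treating the cited Ridley--Petruska statements as already applying to all of $\mathbb{R}$.
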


\begin{proof}
Let $x$ be rational. Then the regular continued fraction expansion of $x$ is of finite length, say $x = [a_0(x); a_1(x), \dotsc, a_n(x)]$ for some $n \in \mathbb{N}_0$. By \cite[Lemma~1.1]{RP00} and \cite[Theorem~2.3]{RP00}, the following hold:
\begin{enumerate}[label=(\roman*)]
\item
If $n$ is odd, then $P$ is left-continuous but has a right jump discontinuity at $x$ with the right-hand limit $\lim_{t \to x^+} P(t) = P(x) - 1/q_n(x)$.
\item
If $n$ is even, then $P$ is right-continuous but has a left jump discontinuity at $x$ with the left-hand limit $\lim_{t \to x^-} P(t) = P(x) + 1/q_n(x)$.
\end{enumerate}
Since $q_n(x) >0$ by definition (see \cite{RP00}, p. 274), it follows that $\lim_{t \to x^-} P(x) > \lim_{t \to x^+} P(x)$ for every $x \in \mathbb{Q}$.
Moreover, by Theorem 2.3 of \cite{RP00}, $P$ is continuous at every irrational point. Therefore, by taking $J \coloneqq \mathbb{R}$ and $D \coloneqq \mathbb{Q}$ in Theorem \ref{IVT theorem}, the result follows.
\end{proof}

We showed that $\mathcal{E}$ is bounded on $I$ (Theorem \ref{boundedness theorem}) and it is continuous $\lambda$-almost everywhere (Theorem \ref{continuity theorem}). Hence, $\mathcal{E}$ is Riemann integrable on $I$. Before calculating the integral, we first find a useful formula for $\mathcal{E}$.

\begin{lemma} \label{E formula lemma}
For every $x \in I$ and for each $n \in \mathbb{N}$, we have
\begin{align} \label{simplified formula}
\mathcal{E}(x) = \sum_{k=1}^n ( x - s_k(x) ) + \frac{(-1)^n}{d_1(x) \dotsm d_n(x)} \mathcal{E} (T^n x).
\end{align}
\end{lemma}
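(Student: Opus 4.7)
The plan is to split the defining sum $\mathcal{E}(x) = \sum_{k=1}^\infty (x - s_k(x))$ at index $n$ and show that the tail is exactly $\frac{(-1)^n}{d_1(x)\dotsm d_n(x)}\mathcal{E}(T^n x)$ after a reindexing. The key algebraic ingredient is the closed form for a single error term supplied by \eqref{simplified series}, namely
\[
x - s_k(x) = \frac{(-1)^k T^k x}{d_1(x)\dotsm d_k(x)},
\]
valid for every $k \in \mathbb{N}$ (and consistent with our conventions $c/\infty = 0$ in the case where $T^m x = 0$ for some $m \le k$).

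Next, I would record the dynamical consistency of digits: directly from the definition $d_j(y) = d_1(T^{j-1}y)$ one obtains
\[
d_j(T^n x) = d_1(T^{j-1}(T^n x)) = d_1(T^{n+j-1}x) = d_{n+j}(x)
\]
for every $j \in \mathbb{N}$. In particular, $T^j(T^n x) = T^{n+j}x$, so applying \eqref{simplified series} to $T^n x$ in place of $x$ gives
\[
T^n x - s_j(T^n x) = \frac{(-1)^j T^{n+j}x}{d_{n+1}(x)\dotsm d_{n+j}(x)}.
\]

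With these two identities in hand, the computation is then straightforward. Splitting the series and reindexing $k = n+j$ in the tail,
\begin{align*}
\mathcal{E}(x) &= \sum_{k=1}^n (x - s_k(x)) + \sum_{k=n+1}^\infty \frac{(-1)^k T^k x}{d_1(x)\dotsm d_k(x)} \\
&= \sum_{k=1}^n (x - s_k(x)) + \frac{(-1)^n}{d_1(x)\dotsm d_n(x)} \sum_{j=1}^\infty \frac{(-1)^j T^{n+j}x}{d_{n+1}(x)\dotsm d_{n+j}(x)} \\
&= \sum_{k=1}^n (x - s_k(x)) + \frac{(-1)^n}{d_1(x)\dotsm d_n(x)} \sum_{j=1}^\infty (T^n x - s_j(T^n x)),
\end{align*}
and the last sum is $\mathcal{E}(T^n x)$ by definition. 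Convergence of the rearranged tail is not an issue because the original series for $\mathcal{E}(x)$ is absolutely convergent (as already established following the definition of $\mathcal{E}$), so splitting off finitely many terms and factoring a constant is legal.

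There is essentially no serious obstacle; the only mild subtlety is making sure the argument remains valid in the degenerate rational case when $T^m x = 0$ for some $m$, so that some $d_k(x) = \infty$. Under the conventions $\infty + c = \infty$ and $c/\infty = 0$ fixed in Section \ref{Section 1}, both sides of \eqref{simplified formula} collapse to the same finite partial sum, so the identity persists. Thus the proof reduces to the two-line reindexing above.
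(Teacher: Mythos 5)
Your proposal is correct and follows essentially the same approach as the paper's proof: both split the defining series for $\mathcal{E}(x)$ at index $n$, invoke \eqref{simplified series} to write $x - s_k(x) = (-1)^k T^k x / (d_1(x)\dotsm d_k(x))$, reindex the tail via $k = n+j$, and use the digit-shift identity $d_{n+j}(x) = d_j(T^n x)$ together with $T^{n+j}x = T^j(T^n x)$ to recognize the factored tail as $\mathcal{E}(T^n x)$. Your remarks on absolute convergence and the degenerate rational case are sound but add nothing beyond what the paper's conventions already handle.
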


\begin{proof}
Let $x \in I$ and $n \in \mathbb{N}$. From the definition of digits, we have $d_{n+j}(x) = d_j (T^nx)$ for any $j \in \mathbb{N}$. Then by making use of \eqref{simplified series}, we obtain
\begin{align*}
\mathcal{E}(x) - \sum_{k=1}^n ( x - s_k(x) )
&= \sum_{k=n+1}^\infty ( x - s_k(x) ) \\
&= \sum_{k=n+1}^\infty \frac{(-1)^k T^kx}{d_1(x) \dotsm d_n(x) d_{n+1}(x) \dotsm d_k(x)} \\
&= \frac{(-1)^n}{d_1(x) \dotsm d_n(x)} \sum_{j=1}^\infty \frac{(-1)^jT^{n+j} x}{d_{n+1}(x) \dotsm d_{n+j}(x)} \\
&= \frac{(-1)^n}{d_1(x) \dotsm d_n(x)} \sum_{j=1}^\infty \frac{(-1)^j T^j (T^nx)}{d_1(T^nx) \dotsm d_j(T^nx)} \\
&= \frac{(-1)^n}{d_1(x) \dotsm d_n(x)} \sum_{j=1}^\infty ( T^nx - s_j (T^nx) ) \\
&= \frac{(-1)^n}{d_1(x) \dotsm d_n(x)} \mathcal{E} (T^n x),
\end{align*}

as desired.
\end{proof}

\begin{theorem}
We have
\[
\int_0^1 \mathcal{E}(x) \, dx = - \frac{1}{8}.
\]
\end{theorem}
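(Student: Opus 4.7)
The plan is to interchange sum and integral in $\mathcal{E}(x) = \sum_{n=1}^\infty(x-s_n(x))$ and then exploit the partition of $I$ by fundamental intervals. The interchange is permissible since $|x-s_n(x)| \leq 1/n!$ uniformly on $I$, so
\[
\int_0^1 \mathcal{E}(x)\,dx = \sum_{n=1}^\infty \int_0^1 (x-s_n(x))\,dx.
\]
For fixed $n$, the intervals $\{I_\sigma : \sigma \in \Sigma_n\}$ are pairwise disjoint and cover $I$ up to a countable set, and $s_n$ takes the constant value $\varphi(\sigma)$ on $I_\sigma$. By Proposition \ref{I sigma}, $\varphi(\sigma)$ is the right endpoint of $I_\sigma$ when $n$ is odd and the left endpoint when $n$ is even, so a direct calculation yields $\int_{I_\sigma}(x-\varphi(\sigma))\,dx = (-1)^n \lambda(I_\sigma)^2/2$.

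Substituting $\lambda(I_\sigma) = 1/(\sigma_1 \dotsm \sigma_{n-1}\sigma_n(\sigma_n+1))$ from \eqref{length of I sigma} produces
\[
\int_0^1 \mathcal{E}(x)\,dx = \frac{1}{2} \sum_{n=1}^\infty (-1)^n \sum_{1 \leq k_1 < \dotsb < k_n} \frac{1}{k_1^2 \dotsm k_{n-1}^2 \, k_n^2 (k_n+1)^2}.
\]
The key step is to rearrange this double sum by the largest coordinate $k_n = k$. Writing $a_j \coloneqq 1/j^2$ and $b_k \coloneqq 1/(k^2(k+1)^2)$, the contribution from $k_n = k$ becomes
\[
b_k \sum_{n=1}^{k}(-1)^n e_{n-1}(a_1, \dotsc, a_{k-1}) = -b_k \prod_{j=1}^{k-1}(1 - a_j),
\]
where $e_m$ denotes the $m$th elementary symmetric polynomial and the displayed equality is the standard finite-product factorization.

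Since $1 - a_1 = 0$, this product vanishes for every $k \geq 2$, so only $k=1$ survives, contributing $-b_1 = -1/4$; multiplying by the prefactor $1/2$ delivers the asserted value $-1/8$. The main obstacle is justifying the rearrangement by $k_n$, which will follow from absolute convergence of the double sum: by \eqref{bound for length of I sigma}, $\sum_{\sigma \in \Sigma_n}\lambda(I_\sigma)^2 \leq 1/(n+1)!$, hence $\sum_n \sum_{\sigma \in \Sigma_n}\lambda(I_\sigma)^2 \leq \sum_n 1/(n+1)! < \infty$, and Fubini for series applies.
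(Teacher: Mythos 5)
Your proof is correct but takes a genuinely different route from the paper's. The paper applies the functional equation $\mathcal{E}(x) = (x-1) - \mathcal{E}(Tx)$ on $(1/2,1]$, which is Lemma \ref{E formula lemma} with $n=1$, and observes that after the substitution $u = Tx = 1-x$ the two occurrences of $\int_0^{1/2}\mathcal{E}$ cancel, leaving only $\int_{1/2}^1 (x-1)\,dx = -1/8$; the whole argument exploits the self-similarity of the Pierce map $T$ and fits in three lines. You instead expand the integral term by term, integrate $x - s_n(x)$ over each fundamental interval $I_\sigma$ (using Proposition \ref{I sigma} correctly to place $\varphi(\sigma)$ at the right endpoint for odd $n$ and the left for even $n$, giving $(-1)^n\lambda(I_\sigma)^2/2$), and then reorganize the resulting absolutely convergent double sum by the largest digit $k_n=k$. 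The elementary-symmetric-function identity $\sum_{m\geq 0}(-1)^m e_m(a_1,\dotsc,a_{k-1}) = \prod_{j=1}^{k-1}(1-a_j)$, combined with $a_1 = 1$, then annihilates every $k\geq 2$ and leaves exactly $-b_1/2 = -1/8$. Your absolute-convergence justification via $\sum_{\sigma\in\Sigma_n}\lambda(I_\sigma)^2 \leq \lambda(I_\sigma)_{\max}\sum_\sigma\lambda(I_\sigma)\leq 1/(n+1)!$ is sound. Your route is longer and requires more bookkeeping than the paper's, but it trades the slick telescoping for a more concrete structural fact that the paper's substitution leaves invisible: after reorganization, the entire integral is carried by the single block indexed by $k_n=1$, i.e., by the interval $I_{(1)} = (1/2,1]$, with exact cancellation among all blocks of larger top digit.
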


\begin{proof}
Note that on the interval $( 1/2, 1 ]$, we have $d_1(x) = 1$ and $s_1(x) = 1$, and hence $\mathcal{E}(x) = x-1 - \mathcal{E}(Tx)$ by \eqref{simplified formula}. By letting $Tx = u = 1-x$ so that $du = - dx$ on the interval $( 1/2, 1 ]$, we obtain
\begin{align*}
\int_0^1 \mathcal{E}(x) \, dx
&= \int_0^{1/2} \mathcal{E}(x) \, dx + \int_{1/2}^1 ( x-1 - \mathcal{E}(Tx) ) \, dx \\
&= \int_0^{1/2} \mathcal{E}(x) \, dx + \int_{1/2}^1 (x-1) \, dx - \int_0^{1/2} \mathcal{E}(u) \, du 
= - \frac{1}{8}.
\end{align*}
\end{proof}

Before we move on to the next section, we prove one lemma which will be used in Section \ref{Section 4}.

\begin{lemma} \label{epsilon cover}
Let $\sigma \coloneqq (\sigma_k)_{k \in \mathbb{N}} \in \Sigma$. For any $n \in \mathbb{N}$, we have
\[
\big| \varphi (\sigma) - \varphi (\sigma^{(n)}) \big| \leq \frac{1}{\sigma_1 \dotsm \sigma_n \sigma_{n+1}}
\quad
\text{and}
\quad
\big| \mathcal{E}^* (\sigma) - \mathcal{E}^* (\sigma^{(n)}) \big| \leq \frac{n}{\sigma_1 \dotsm \sigma_n \sigma_{n+1}}. 
\]
\end{lemma}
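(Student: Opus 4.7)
The plan is to reduce both inequalities to tail estimates of the series defining $\varphi$ and $\mathcal{E}^*$ via the observation that truncating $\sigma$ after position $n$ (and padding with $\infty$'s) kills every term of index $k \geq n+1$ in either series, because such a term carries $\sigma^{(n)}_{n+1} = \infty$ in its denominator.

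For the first inequality, I would begin by noting that the defining series of $\varphi$ gives $\varphi(\sigma^{(n)}) = \varphi_n(\sigma)$: the first $n$ terms match by construction, and every term of index $k \geq n+1$ vanishes because $\sigma^{(n)}_{n+1} = \infty$ appears as a factor of the denominator. The inequality $|\varphi(\sigma) - \varphi(\sigma^{(n)})| \leq 1/(\sigma_1 \cdots \sigma_{n+1})$ is then a direct restatement of \eqref{phi - phi n}.

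For the second inequality, I would use the closed form from Lemma \ref{E star formula lemma} to write
\[
\mathcal{E}^*(\sigma) = \sum_{k=1}^\infty \frac{(-1)^k k}{\sigma_1 \dotsm \sigma_{k+1}}, \qquad \mathcal{E}^*(\sigma^{(n)}) = \sum_{k=1}^{n-1} \frac{(-1)^k k}{\sigma_1 \dotsm \sigma_{k+1}},
\]
where again every $k \geq n$ term of $\mathcal{E}^*(\sigma^{(n)})$ is zero because the factor $\sigma^{(n)}_{n+1} = \infty$ appears in the denominator. Subtracting gives the alternating tail
\[
\mathcal{E}^*(\sigma) - \mathcal{E}^*(\sigma^{(n)}) = \sum_{k=n}^\infty \frac{(-1)^k k}{\sigma_1 \dotsm \sigma_{k+1}}.
\]
Lemma \ref{sequence lemma} tells us that $a_k \coloneqq k/(\sigma_1 \dotsm \sigma_{k+1})$ is monotonically decreasing to $0$, so the standard alternating-series estimate bounds the tail by its leading term $a_n = n/(\sigma_1 \dotsm \sigma_{n+1})$, yielding the claim.

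I do not expect a serious obstacle here; the only care needed is for the edge case where $\sigma \in \Sigma_m$ with $m \leq n$, in which $\sigma^{(n)} = \sigma$ and both inequalities hold trivially. In the remaining cases ($\sigma \in \Sigma_m$ with $m > n$, or $\sigma \in \Sigma_\infty$), the first $n+1$ entries of $\sigma$ are finite positive integers, so the right-hand sides are finite and the monotonicity hypothesis of the alternating-series estimate is verified by Lemma \ref{sequence lemma}.
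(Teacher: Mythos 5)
Your proposal is correct and matches the paper's proof essentially step for step: both reduce the first inequality to \eqref{phi - phi n} via $\varphi(\sigma^{(n)}) = \varphi_n(\sigma)$, and both derive the second inequality by truncating the formula from Lemma \ref{E star formula lemma} to express $\mathcal{E}^*(\sigma^{(n)})$ and then bounding the alternating tail by its leading term using the monotonicity from Lemma \ref{sequence lemma}. The only cosmetic difference is that the paper writes out the pairwise grouping explicitly to justify the alternating-series bound, while you invoke the standard estimate directly; the edge case you flag ($\sigma \in \Sigma_m$ with $m \leq n$) is correctly handled and is in fact already covered by the paper's conventions on $\infty$.
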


\begin{proof}
Let $n \in \mathbb{N}$. The first inequality is immediate from the definitions of $\varphi$ and $\sigma^{(n)}$. Indeed, since $\sigma$ and $\sigma^{(n)}$ share the initial block of length $n$, by \eqref{phi - phi n}, we have
\begin{align*}
\big| \varphi (\sigma) - \varphi (\sigma^{(n)}) \big| 
&= \big| \varphi (\sigma) - \varphi_n (\sigma) \big|
\leq \frac{1}{\sigma_1 \dotsm \sigma_n \sigma_{n+1}}.
\end{align*}

The second inequality follows from Lemma \ref{E star formula lemma}. For $\mathcal{E}^*(\sigma^{(n)})$, we just need to take $\sigma_k = \infty$ for all $k \geq n+1$ in the formula \eqref{E star formula} to obtain
\begin{align*}
\mathcal{E}^* (\sigma^{(n)}) = \sum_{k=1}^{n-1} \frac{(-1)^k k}{\sigma_1 \dotsm \sigma_{k+1}}.
\end{align*}
Thus, by Lemma \ref{sequence lemma}, we find that
\begin{align*}
\big| \mathcal{E}^* (\sigma) - \mathcal{E}^* (\sigma^{(n)}) \big|
&= \left| \sum_{k=n}^\infty \frac{(-1)^k k}{\sigma_1 \dotsm \sigma_{k+1}} \right| \\
&= \left| \frac{n}{\sigma_1 \dotsm \sigma_{n+1}} - \sum_{j=1}^\infty \left( \frac{n+(2j-1)}{\sigma_1 \dotsm \sigma_{n+2j}} - \frac{n+2j}{\sigma_1 \dotsm \sigma_{n+(2j+1)}} \right) \right|
\leq \frac{n}{\sigma_1 \dotsm \sigma_{n+1}}.
\end{align*}
\end{proof}

\section{The dimension of the graph of $\mathcal{E}(x)$} \label{Section 4}

In this section, we determine three widely used and well-known dimensions, namely the Hausdorff dimension, the box-counting dimension, and the covering dimension, of the graph of the error-sum function $\mathcal{E} \colon I \to \mathbb{R}$. In fact, although $\mathcal{E}$ is discontinuous on a dense subset of $I$ (Theorem \ref{discontinuity theorem}) and is not of bounded variation (Theorem \ref{bounded variation theorem}), it is not sufficiently irregular to have a graph of any dimension strictly greater than one. Nevertheless, we show that the Hausdorff dimension of the graph is strictly greater than its covering dimension. This will lead to the conclusion that the graph is indeed a fractal according to Mandelbrot's definition in his prominent book \cite{Man82}, where he coined the term {\em fractal} in a Euclidean space and defined it as a set whose covering dimension is strictly less than its Hausdorff dimension.

Throughout this section, for a subset $F$ of $\mathbb{R}$ or of $\mathbb{R}^2$, we denote by $\mathcal{H}^s (F)$ the $s$-dimensional Hausdorff measure of $F$ and by $\hdim F$ the Hausdorff dimension of $F$. In addition, we denote by $\lbdim F$ and $\ubdim F$ the lower and upper box-counting dimension of $F$, respectively. If $\lbdim F = \ubdim F$, we call this common value the box-counting dimension of $F$ and denote the value by $\bdim F$. Lastly, the covering dimension of $F$ is denoted by $\covdim F$.

We refer the reader to \cite[Chapters~2--4]{Fal14} for details on the Hausdorff measure, the Hausdorff dimension, and the box-counting dimension, and \cite[Chapters~1--2]{Coo15} for the covering dimension which is called the topological dimension in the book.

\subsection{The Hausdorff dimension of the graph of $\mathcal{E}(x)$}

Define $G \colon I \to I \times \mathbb{R}$ by $G(x) \coloneqq (x, \mathcal{E}(x))$ for $x \in I$. Then $G(I) = \{ ( x, \mathcal{E}(x) ) : x \in I \}$ is the graph of $\mathcal{E}$.

It should be mentioned that the proof idea of the following theorem is borrowed from earlier studies, e.g., \cite{CWY14, RP00, SMZ06, SW07}.

\begin{theorem} \label{Hausdorff dimension of graph}
The graph of the error-sum function $\mathcal{E} \colon I \to \mathbb{R}$ has the Hausdorff dimension one, i.e., $\hdim G(I)=1$.
\end{theorem}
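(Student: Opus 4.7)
My plan is to prove the two inequalities $\hdim G(I) \geq 1$ and $\hdim G(I) \leq 1$ separately. The lower bound is almost immediate: the coordinate projection $\pi_1 \colon \mathbb{R}^2 \to \mathbb{R}$, $(x,y) \mapsto x$, is $1$-Lipschitz and satisfies $\pi_1(G(I)) = I$, so since Lipschitz maps do not increase Hausdorff dimension, $\hdim G(I) \geq \hdim I = 1$.

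For the upper bound, the strategy is to build, for each $n \in \mathbb{N}$, an efficient cover of $G(I)$ from the fundamental intervals of order $n$ and then let $n \to \infty$. The crucial input is Lemma \ref{fourth lemma}: for each $\sigma \in \Sigma_n$, the oscillation of $\mathcal{E}$ on $I_\sigma$ equals $n \cdot \lambda(I_\sigma)$, so the piece $G(I) \cap (I_\sigma \times \mathbb{R})$ is contained in a rectangle of width $\lambda(I_\sigma)$ and height $n \cdot \lambda(I_\sigma)$. I will tile each such rectangle by at most $n$ closed squares of side length $\lambda(I_\sigma)$, hence of diameter at most $\sqrt{2}\,\lambda(I_\sigma) \leq \sqrt{2}/(n+1)!$ by \eqref{bound for length of I sigma}. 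Letting $\delta_n$ denote this uniform diameter bound, the resulting collection (as $\sigma$ ranges over $\Sigma_n$) is a $\delta_n$-cover of $G(I)$ with $\delta_n \to 0$.

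Fix any $s > 1$. Summing the $s$-th powers of the square diameters, using $\lambda(I_\sigma) \leq 1/(n+1)!$ to pull out a factor of $(1/(n+1)!)^{s-1}$, and recalling $\sum_{\sigma \in \Sigma_n} \lambda(I_\sigma) = 1$ (as noted in the proof of Theorem \ref{bounded variation theorem}), I obtain a bound on $\mathcal{H}^s_{\delta_n}(G(I))$ of the shape $n \cdot (\sqrt{2})^s \cdot ((n+1)!)^{-(s-1)}$. Since the factorial dominates the polynomial factor $n$, this bound tends to $0$ as $n \to \infty$, so $\mathcal{H}^s(G(I)) = 0$ for every $s > 1$ and therefore $\hdim G(I) \leq 1$, completing the proof.

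The only substantive step is the choice of the cover: it depends critically on the sharp per-cylinder oscillation bound of Lemma \ref{fourth lemma}. Without it, the naive estimate $|\mathcal{E}(t) - \mathcal{E}(u)| \leq 1/2$ from Theorem \ref{boundedness theorem} would force squares of constant-order diameter over each $I_\sigma$, and the $n \to \infty$ limit would collapse. With the lemma in hand, the estimate is essentially the elementary observation that $n/((n+1)!)^{s-1} \to 0$ for $s > 1$, and no further delicate work is required.
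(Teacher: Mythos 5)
Your approach coincides with the paper's: both obtain the lower bound by projecting onto the first coordinate, and both use the per-cylinder oscillation bound of Lemma~\ref{fourth lemma} to build $n$-dependent covers whose $s$-dimensional premeasure vanishes as $n\to\infty$. The only cosmetic difference is that the paper covers each piece over $I_\sigma$ by a single rectangle of diameter $\sqrt{n^2+1}\,\lambda(I_\sigma)$, whereas you slice the rectangle into $n$ squares of diameter $\sqrt{2}\,\lambda(I_\sigma)$; both give the same order of decay.

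There is one genuine gap, however. For a fixed $n$, the fundamental intervals $\{I_\sigma : \sigma \in \Sigma_n\}$ do \emph{not} cover all of $I$: a point $x$ lies in some $I_\sigma$ with $\sigma \in \Sigma_n$ only if all of $d_1(x),\dotsc,d_n(x)$ are finite, i.e., only if the Pierce expansion of $x$ has length at least $n$. Rationals with shorter Pierce expansions (including $x=0$) are omitted. Consequently your collection of squares is a $\delta_n$-cover of $G(I)\setminus G(E_n)$ for a countably infinite set $E_n \subseteq E$, not of all of $G(I)$, and your conclusion $\mathcal H^s(G(I))=0$ does not yet follow. The paper handles this explicitly: it first shows $\hdim\big(G(I)\setminus G(E)\big)\leq 1$, then observes that $G(E)$ is countable (hence of Hausdorff dimension zero), and invokes countable stability of Hausdorff dimension to conclude $\hdim G(I)\leq 1$. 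Your argument needs this extra step (or some equivalent device, such as enlarging each $n$-cover by a countable family of arbitrarily small squares around the finitely-expanding points) to be complete.
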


\begin{proof}
For the lower bound, we use the projection map onto the first coordinate $\Proj \colon \mathbb{R}^2 \to \mathbb{R}^2$ given by $\Proj ( (x,y) ) = (x,0)$ for each $(x,y) \in \mathbb{R}^2$. Recall the formula (6.1) in \cite{Fal14} which tells us that for any subset $F$ of $\mathbb{R}^2$ we have $\hdim \Proj (F) \leq \min \{ \hdim F, 1 \}$. It follows that
\[
1 = \hdim I = \hdim ( \Proj (G(I)) ) \leq \hdim G(I).
\]

For the upper bound, we find a suitable covering for $G(I)$. For any $n \in \mathbb{N}$ and $\sigma \in \Sigma_n$, we define a closed interval $J_\sigma \subseteq \mathbb{R}$ by
\[
J_\sigma \coloneqq 
\left[ \inf_{t \in I_\sigma} \mathcal{E}(t), \sup_{t \in I_\sigma} \mathcal{E}(t) \right].
\]
Then $I_\sigma \times \mathcal{E}(I_\sigma) \subseteq I_\sigma \times J_\sigma$. We claim that, for any $n \in \mathbb{N}$, we have $I \setminus E \subseteq \bigcup_{\sigma \in \Sigma_n} I_\sigma$, where $E = I \cap \mathbb{Q}$. Indeed, if $x \in I \setminus E$, then $\tau \coloneqq f(x) \in \Sigma_\infty$ by Proposition \ref{inverse image of phi}(ii). Clearly, $f(x) \in \Upsilon_{\tau^{(n)}}$ with $\tau^{(n)} \in \Sigma_n$. Hence, $x \in f^{-1} (\Upsilon_{\tau^{(n)}}) = I_{\tau^{(n)}} \subseteq \bigcup_{\sigma \in \Sigma_n} I_\sigma$, and this proves the claim. It follows that, for any $n \in \mathbb{N}$, the collection $\mathcal{J} \coloneqq \{ I_\sigma \times J_\sigma : \sigma \in \Sigma_n \}$ is a covering of $F \coloneqq G(I) \setminus G(E)$. Here, we have $\sum_{\sigma \in \Sigma_n} \lambda (I_\sigma) = 1$ from the first coordinate since $\sum_{\sigma \in \Sigma_n} \lambda (I_\sigma) = \lambda \left( \bigcup_{\sigma \in \Sigma_n} I_\sigma \right) \geq \lambda (I \setminus E) = 1$. Due to Lemma \ref{fourth lemma}, we have $\lambda (J_\sigma) = n \cdot \lambda (I_\sigma)$. So we can cover each $I_\sigma \times J_\sigma \in \mathcal{J}$ by a rectangle of base $\lambda (I_\sigma)$ and height $n \cdot \lambda (I_\sigma)$. Note that such rectangles have diameter $\sqrt{n^2+1} \cdot \lambda (I_\sigma)$. Let $\varepsilon >0$ be given. Recall from \eqref{bound for length of I sigma} that $\lambda (I_\sigma) \leq 1/(n+1)!$. Then
\begin{align*}
\mathcal{H}^{1+\varepsilon} ( F )
&\leq \liminf_{n \to \infty} \left[ \sum_{\sigma \in \Sigma_n} ( \sqrt{n^2+1} \cdot \lambda (I_\sigma) )^{1+\varepsilon} \right] \\
&\leq \liminf_{n \to \infty} \left[ (\sqrt{n^2+1})^{1+\varepsilon} \left( \frac{1}{(n+1)!} \right)^{\varepsilon} \sum_{\sigma \in \Sigma_n} \lambda (I_\sigma) \right]
= 0.
\end{align*}
The above calculation shows that $\hdim F \leq 1+\varepsilon$. Since $\varepsilon>0$ was arbitrary, it follows that $\hdim F \leq 1$. Now note that the set $G(E) = \bigcup_{x \in E} \{ G(x) \}$ is countable as a union of singletons over a countable set $E = I \cap \mathbb{Q}$. Therefore, by countable stability of the Hausdorff dimension (the third property in \cite[pp.~48--49]{Fal14}), we deduce that $\hdim G(I) = \sup_{x \in E} \{ \hdim F,  \hdim \{ G(x) \} \} \leq 1$, and this completes the proof.
\end{proof}

\subsection{The box-counting dimension of the graph of $\mathcal{E}(x)$}

We shall establish the following theorem.

\begin{theorem} \label{box dimension of graph}
The graph of the error-sum function $\mathcal{E} \colon I \to \mathbb{R}$ has the box-counting dimension one, i.e., $\bdim G(I)=1$.
\end{theorem}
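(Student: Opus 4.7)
Since $\hdim G(I) = 1$ by Theorem~\ref{Hausdorff dimension of graph} and $\hdim \le \lbdim \le \ubdim$, the lower bound $\lbdim G(I) \ge 1$ is automatic, so it suffices to prove $\ubdim G(I) \le 1$.

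Use the standard oscillation count. Given $\delta>0$, partition $[0,1]$ into $M \coloneqq \lceil 1/\delta \rceil$ closed intervals $J_1,\dotsc,J_M$ of length at most $\delta$, and set $\omega_k \coloneqq \sup_{t \in J_k}\mathcal{E}(t) - \inf_{t \in J_k}\mathcal{E}(t)$. Then
\[
N_\delta(G(I)) \le \sum_{k=1}^M \bigl(\omega_k/\delta + 1\bigr) = V_\delta/\delta + M, \qquad V_\delta \coloneqq \sum_{k=1}^M \omega_k,
\]
where $N_\delta$ denotes the number of closed $\delta$-squares needed to cover $G(I)$; hence it suffices to show that $V_\delta$ grows sub-polynomially in $1/\delta$.

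Choose $n$ with $1/(n+1)! \le \delta < 1/n!$. The key claim is that for any $x,y \in J_k$ with $x \le y$,
\[
|\mathcal{E}(x) - \mathcal{E}(y)| \le n(y-x) + \sum_{\substack{r \in (x,y) \cap E' \\ \text{Pierce length}(r)\, \le\, n}} \bigl|\lim_{t \to r^-}\mathcal{E}(t) - \lim_{t \to r^+}\mathcal{E}(t)\bigr|.
\]
This ``walk'' bound follows by partitioning $[x,y]$ at boundaries between consecutive order-$n$ fundamental intervals and telescoping: on each piece lying inside a single $\overline{I_\sigma}$ the difference is bounded by $\sup_{t,u\in I_\sigma}|\mathcal{E}(t)-\mathcal{E}(u)| = n\lambda(I_\sigma)$ (Lemma~\ref{fourth lemma}), while each crossing of a boundary rational $r$ contributes the jump of $\mathcal{E}$ at $r$ computed in Theorem~\ref{discontinuity theorem}; jumps at rationals of Pierce length strictly greater than $n$ lie inside some $I_\sigma$ and are already absorbed into its oscillation $n\lambda(I_\sigma)$. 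Applying this with $x,y$ ranging in $J_k$ and summing, the first term contributes $\sum_k n\delta = nM\delta \le 2n$. For the second term, the jump at $r$ of Pierce length $m$ has magnitude $1/(d_1(r)\dotsm d_{m-1}(r)(d_m(r)-1)d_m(r))$, and the identity $1/((d-1)d) = 1/(d-1) - 1/d$ telescopes the sum over realizable Pierce sequences of length $m$ to $\sum_{\tau \in \Sigma_{m-1}}\lambda(I_\tau) = 1$; summing over $m = 1,\dotsc,n$ bounds the total jump contribution by $n$. Hence $V_\delta \le 2n + n = 3n$.

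Consequently $N_\delta(G(I)) \le 3n/\delta + M = O(n/\delta)$, and since $-\log \delta \ge \log n! \sim n\log n$ by Stirling while $\log n = o(n\log n)$, we obtain $\log N_\delta(G(I))/(-\log \delta) \to 1$ as $\delta \to 0$. This gives $\ubdim G(I) \le 1$, and combined with the lower bound, $\bdim G(I) = 1$. The main technical delicacy is the walk bound: a given $[x,y]$ may contain accumulation points of the order-$n$ boundaries (specifically, rationals of Pierce length strictly less than $n-1$), so no finite partition of $[x,y]$ by $\overline{I_\sigma}$'s exists. This is handled by applying the walk estimate to any finite sub-partition of boundaries---the right-hand side is monotone in refinement and bounded uniformly by $n(y-x)$ plus the total jump contribution at lengths $\le n$, which is finite $(\le n)$---and then taking the supremum.
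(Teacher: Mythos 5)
Your upper-bound strategy---a uniform partition of $[0,1]$ and an oscillation count, splitting each oscillation into a local term via Lemma~\ref{fourth lemma} plus a jump term summed by the telescope $1/((d-1)d) = 1/(d-1) - 1/d$---is genuinely different from, and more elementary than, the paper's, which partitions the symbolic space $\Sigma$ by digit-product thresholds and invokes {\L}uczak's counting lemma (Propositions~\ref{Luczak lemma}--\ref{refined bound lemma}). Your telescoping of the jump magnitudes (each Pierce length $m \le n$ contributes total $1$, so overall $\le n$) and the closing estimate $\log N_\delta / \log(1/\delta) \to 1$ are both correct.

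There is, however, a genuine gap in the walk bound. Your justification telescopes through a finite chain of closed order-$n$ fundamental intervals $\overline{I_\sigma}$ covering $[x,y]$, but, as you yourself note, no such finite chain exists when $(x,y)$ contains a rational of Pierce length $< n-1$, at which the order-$n$ boundaries accumulate. The repair you propose---telescope through an arbitrary finite subset of boundary rationals and pass to a supremum---is circular: once only finitely many breakpoints $r_1 < \cdots < r_p$ are retained, the intermediate open pieces $(r_{i-1},r_i)$ are no longer contained in a single $I_\sigma$, so Lemma~\ref{fourth lemma} yields nothing and bounding the oscillation on each piece is precisely the original problem; nor is the right-hand side ``monotone in refinement'' in any operative sense, since it is a fixed quantity independent of the chosen breakpoints. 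A non-circular version of your idea is to set $\mathcal{E}_n(x) \coloneqq \sum_{j=1}^n (x-s_j(x)) = nx - g(x)$ with $g \coloneqq \sum_{j=1}^n s_j$, which is constant on every order-$n$ fundamental interval; then $|\mathcal{E}-\mathcal{E}_n|\le 2/(n+2)!$ is negligible, the Lipschitz part $nx$ contributes exactly $n\lambda(J_k)$, and $\mathrm{osc}_{J_k}(g)$ is bounded by the total variation of $g$ on $J_k$, which (for a regulated function constant off a countable, summable-jump set) equals the sum of the jumps of $g$ in $J_k$---and these coincide with the jumps of $\mathcal{E}$ at rationals of Pierce length $\le n$, i.e.\ your jump sum. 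A secondary slip: the local term you write as $n(y-x)$ should really be $n\sum_\sigma \lambda(I_\sigma)$ over the order-$n$ intervals meeting $(x,y)$, which can exceed $n(y-x)$ by up to $2n/(n+1)! \le 2n\delta$; this does not change the final $O(n)$ bound, but the inequality as literally stated is too strong. The conclusion $\bdim G(I)=1$ is correct; the walk bound as stated and justified is not.
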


We define $\Gamma \colon \Sigma \to I \times \mathbb{R}$ by $\Gamma (\sigma) \coloneqq ( \varphi (\sigma), \mathcal{E}^* (\sigma) )$ for $\sigma \in \Sigma$.

\begin{lemma} \label{properties of Gamma}
The following two properties hold for $\Gamma$:
\begin{enumerate}[label=(\roman*)]
\item
$\Gamma \colon \Sigma \to \Gamma (\Sigma)$ is a homeomorphism.
\item
$\Gamma (\Sigma)$ is compact.
\end{enumerate}
\end{lemma}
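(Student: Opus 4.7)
The plan is to handle (ii) first as a one-line consequence of compactness of the domain, then reduce (i) to the injectivity of $\Gamma$ via the standard fact that a continuous bijection from a compact space to a Hausdorff space is automatically a homeomorphism.

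First I would observe that $\Gamma$ is continuous: $\varphi \colon \Sigma \to I$ is Lipschitz by Lemma \ref{phi is Lipschitz}, and $\mathcal{E}^* \colon \Sigma \to \mathbb{R}$ is continuous by Lemma \ref{E star is continuous}, so the map $\sigma \mapsto (\varphi(\sigma),\mathcal{E}^*(\sigma))$ into $I \times \mathbb{R}$ (equipped with the product topology) is continuous in each coordinate and hence continuous. Since $\Sigma$ is compact by Lemma \ref{Sigma is compact}, its continuous image $\Gamma(\Sigma)$ is compact, which settles (ii).

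For (i), it remains to prove that $\Gamma$ is injective, as then $\Gamma \colon \Sigma \to \Gamma(\Sigma)$ is a continuous bijection from a compact space onto a subspace of the Hausdorff space $I \times \mathbb{R}$, hence a homeomorphism. Suppose $\Gamma(\sigma)=\Gamma(\tau)$. Then $\varphi(\sigma)=\varphi(\tau)=:x$, so $\sigma,\tau \in \varphi^{-1}(x)$. By Proposition \ref{inverse image of phi}, if $x \in I \setminus E'$ then $\varphi^{-1}(x)$ is a singleton and $\sigma=\tau$ is immediate. The only substantive case is $x \in E'$, where $\varphi^{-1}(x)=\{\sigma_0,\sigma_0'\}$ with $\sigma_0 = f(x) \in \Sigma_n \cap \Sigma_{\realizable}$ and $\sigma_0' = (d_1(x),\dotsc,d_{n-1}(x),d_n(x)-1,d_n(x)) \in \Sigma_{n+1} \cap (\Sigma \setminus \Sigma_{\realizable})$.

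The key point is that $\mathcal{E}^*$ separates $\sigma_0$ and $\sigma_0'$. By Lemma \ref{commutative diagram}, $\mathcal{E}^*(\sigma_0)=\mathcal{E}(x)$, and by the computation carried out in Lemma \ref{discontinuity lemma},
\[
\mathcal{E}^*(\sigma_0') - \mathcal{E}^*(\sigma_0) = \frac{(-1)^n}{d_1(x) \dotsm d_{n-1}(x)(d_n(x)-1)d_n(x)}.
\]
Here the denominator is a positive integer: $d_n(x)-1 \geq 1$ because either $n \geq 2$ and then $d_n(x) \geq d_{n-1}(x)+2 \geq 3$ by Proposition \ref{digit condition proposition}, or $n=1$ and then $x \in E' \subseteq (0,1)$ forces $d_1(x) = \lfloor 1/x \rfloor \geq 2$ (the alternative $d_1(x)=1$ only occurs for $x \in (1/2,1)$, in which case the Pierce expansion of $x$ has length at least $2$). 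Therefore $\mathcal{E}^*(\sigma_0') \neq \mathcal{E}^*(\sigma_0)$, and the equality $\mathcal{E}^*(\sigma)=\mathcal{E}^*(\tau)$ forces $\sigma=\tau$. The hardest step to make airtight is this last case distinction at $n=1$, since one must rule out $d_n(x)-1=0$; everything else is bookkeeping on top of the lemmas already established.
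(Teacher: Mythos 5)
Your proposal is correct and follows essentially the same route as the paper: both arguments reduce (i) to showing that $\Gamma$ is a continuous injection from the compact space $\Sigma$ into the Hausdorff space $\mathbb{R}^2$, and both establish injectivity by observing that when $\varphi(\sigma)=\varphi(\tau)$ with $\sigma\neq\tau$ (the doubleton case of Proposition~\ref{inverse image of phi}), the values $\mathcal{E}^*(\sigma)$ and $\mathcal{E}^*(\tau)$ differ by the explicit jump term. The paper cites Theorem~\ref{discontinuity theorem} directly for that separation and derives (ii) as a consequence of (i), whereas you invoke Lemma~\ref{discontinuity lemma} and prove (ii) independently from compactness; your explicit check that $d_n(x)-1\geq 1$ is a nice bit of extra rigor that the paper leaves implicit in Proposition~\ref{inverse image of phi}.
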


\begin{proof}
(i)
It is enough to show that $\Gamma$ is a continuous injection, since $\Sigma$ is compact (Lemma \ref{Sigma is closed}) and $\Gamma (\Sigma) \subseteq \mathbb{R}^2$ is Hausdorff. Since $\varphi \colon \Sigma \to I$ and $\mathcal{E}^* \colon \Sigma \to \mathbb{R}$ are continuous by Lemmas \ref{phi is Lipschitz} and \ref{E star is continuous}, respectively, it follows that $\Gamma$ is continuous. 

To prove injectivity, suppose $\Gamma (\sigma) = \Gamma (\tau)$. Then $\varphi (\sigma) = \varphi (\tau)$ from the first coordinate. Assume $\sigma \neq \tau$. Then, by Proposition \ref{inverse image of phi}(i), we have either $\sigma \in \Sigma_{\realizable}$ with $\tau \not \in \Sigma_{\realizable}$ or $\tau \in \Sigma_{\realizable}$ with $\sigma \not \in \Sigma_{\realizable}$. In either case, Theorem \ref{discontinuity theorem} tells us that $\mathcal{E}^*(\sigma) \neq \mathcal{E}^*(\tau)$, which is a contradiction. Thus $\sigma = \tau$.

(ii)
Since $\Sigma$ is compact by Lemma \ref{Sigma is closed}, the result follows from part (i).
\end{proof}

\begin{lemma} \label{Gamma is a closure of G}
We have $\Gamma (\Sigma) = \overline{G(I)}$.
\end{lemma}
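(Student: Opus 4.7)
The plan is to prove both inclusions separately, leveraging the observation that $\Gamma \circ f = G$, together with the density result in Lemma~\ref{closure of f I} and the compactness of $\Gamma(\Sigma)$ from Lemma~\ref{properties of Gamma}(ii).

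First I would establish the identity $G = \Gamma \circ f$. For any $x \in I$, set $\sigma \coloneqq f(x)$. Since $\varphi \circ f = \id_I$ (the Observation after Proposition~\ref{inverse image of phi}) we have $\varphi(\sigma) = x$, and since $\mathcal{E} = \mathcal{E}^* \circ f$ by Lemma~\ref{commutative diagram} we have $\mathcal{E}^*(\sigma) = \mathcal{E}(x)$. Hence
\[
\Gamma(f(x)) = (\varphi(f(x)), \mathcal{E}^*(f(x))) = (x, \mathcal{E}(x)) = G(x),
\]
so $G(I) = \Gamma(f(I)) \subseteq \Gamma(\Sigma)$. By Lemma~\ref{properties of Gamma}(ii), $\Gamma(\Sigma)$ is compact in $\mathbb{R}^2$ and in particular closed, so taking closures yields $\overline{G(I)} \subseteq \Gamma(\Sigma)$.

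For the reverse inclusion $\Gamma(\Sigma) \subseteq \overline{G(I)}$, I would use density together with continuity. Fix $\sigma \in \Sigma$. By Lemma~\ref{closure of f I}, $\Sigma = \overline{f(I)}$, so there exists a sequence $(\sigma_n)_{n \in \mathbb{N}}$ in $f(I)$ with $\sigma_n \to \sigma$ in $(\Sigma, \rho^\mathbb{N})$. Write $\sigma_n = f(x_n)$ for some $x_n \in I$. The map $\Gamma$ is continuous on $\Sigma$ as a consequence of the Lipschitz continuity of $\varphi$ (Lemma~\ref{phi is Lipschitz}) and the continuity of $\mathcal{E}^*$ (Lemma~\ref{E star is continuous}), so $\Gamma(\sigma_n) \to \Gamma(\sigma)$ in $\mathbb{R}^2$. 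But $\Gamma(\sigma_n) = \Gamma(f(x_n)) = G(x_n) \in G(I)$ by the first step, so $\Gamma(\sigma)$ is a limit of points of $G(I)$, i.e.\ $\Gamma(\sigma) \in \overline{G(I)}$. Combining the two inclusions gives $\Gamma(\Sigma) = \overline{G(I)}$.

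No part of this argument strikes me as a serious obstacle once the preceding machinery is in place: the identity $G = \Gamma \circ f$ is a routine unwinding of definitions, the closedness of $\Gamma(\Sigma)$ is already recorded in Lemma~\ref{properties of Gamma}(ii), and the only non-trivial input is the density Lemma~\ref{closure of f I}, which has already been established. The mild subtlety to keep an eye on is that one should not try to assert $\Gamma$ is an identification of $\Sigma$ with $\overline{G(I)}$ beyond what Lemma~\ref{properties of Gamma}(i) provides; here we only need set-theoretic equality of the image with the closure, which follows cleanly from continuity plus density plus compactness.
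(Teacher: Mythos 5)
Your proof is correct and is essentially the paper's argument, just unpacked: the paper writes $\Gamma(\Sigma)=\Gamma(\overline{f(I)})=\overline{\Gamma(f(I))}=\overline{G(I)}$ in one line (using that a continuous map on a compact space takes closures to closures), while you split this into the two inclusions (closedness of $\Gamma(\Sigma)$ for one direction, density plus continuity for the other). Same ingredients ($\Gamma\circ f=G$, Lemma~\ref{closure of f I}, Lemma~\ref{properties of Gamma}), same logic, just made more explicit.
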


\begin{proof}
First note that since $\varphi \circ f = \id_{I}$ and $\mathcal{E}^* \circ f = \mathcal{E}$ (Lemma \ref{commutative diagram}), we have
\[
(\Gamma \circ f) (x) = ( \varphi(f(x)), \mathcal{E}^*(f(x)) ) = (x, \mathcal{E}(x))
\]
for any $x \in I$, and hence $(\Gamma \circ f) (I) = G(I)$. Since $\Sigma$ is compact (Lemma \ref{Sigma is closed}), the continuity of $\Gamma$ (Lemma \ref{properties of Gamma}(i)) tells us that $\Gamma (\overline{f (I)}) = \overline{\Gamma (f (I))}$. Then, by Lemma \ref{closure of f I}, we have
\[
\Gamma (\Sigma) = \Gamma (\overline{f (I)}) = \overline{\Gamma (f (I))} = \overline{G(I)}.
\]
\end{proof}

The following proposition gives us a general relation among $\hdim$, $\lbdim$, and $\ubdim$ for certain subsets of $\mathbb{R}^2$.

\begin{proposition}[{\cite[Proposition~3.4]{Fal14}}] \label{hdim lbdim ubdim}
If $F \subseteq \mathbb{R}^2$ is non-empty and bounded, then 
\[
\hdim F \leq \lbdim F \leq \ubdim F.
\]
\end{proposition}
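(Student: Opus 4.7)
The plan is to handle the two inequalities separately. The second inequality $\lbdim F \leq \ubdim F$ is immediate from the definitions: writing $N_\delta(F)$ for the smallest number of sets of diameter at most $\delta$ needed to cover $F$, one has
\[
\lbdim F \coloneqq \liminf_{\delta \to 0^+} \frac{\log N_\delta(F)}{-\log \delta}, \qquad \ubdim F \coloneqq \limsup_{\delta \to 0^+} \frac{\log N_\delta(F)}{-\log \delta},
\]
and for any real-valued function the liminf is bounded above by the limsup. The boundedness of $F$ ensures that $N_\delta(F)$ is a finite positive integer for every $\delta > 0$, so both quantities are well defined.

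For the main inequality $\hdim F \leq \lbdim F$, the strategy is to compare the Hausdorff $\delta$-pre-measure $\mathcal{H}^s_\delta(F) \coloneqq \inf \{ \sum_i |U_i|^s : \{U_i\} \text{ is a } \delta\text{-cover of } F \}$ with $N_\delta(F)$. A near-optimal cover of $F$ by $N_\delta(F)$ sets of diameter at most $\delta$ is in particular a $\delta$-cover in the sense of Hausdorff measure, so for every $s \geq 0$ and $\delta > 0$,
\[
\mathcal{H}^s_\delta(F) \leq N_\delta(F) \cdot \delta^s.
\]
Now fix any $s$ with $0 < s < \hdim F$. By the very definition of Hausdorff dimension we have $\mathcal{H}^s(F) = \infty$, and since $\mathcal{H}^s_\delta(F)$ increases as $\delta$ decreases and tends to $\mathcal{H}^s(F)$, there exists $\delta_0 \in (0,1)$ such that $\mathcal{H}^s_\delta(F) > 1$ for every $\delta \in (0, \delta_0)$. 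Combined with the display above this gives $N_\delta(F) \cdot \delta^s > 1$, i.e., $\log N_\delta(F) > -s \log \delta$, and dividing through by $-\log \delta > 0$ (which is where $\delta < 1$ is used) yields $\log N_\delta(F)/(-\log \delta) > s$. Taking $\liminf_{\delta \to 0^+}$ gives $\lbdim F \geq s$, and since $s < \hdim F$ was arbitrary, $\lbdim F \geq \hdim F$.

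This reproduces the standard textbook argument of \cite[Proposition~3.4]{Fal14}, so there is no real obstacle. The only mildly delicate point is keeping track of the direction of the inequality after dividing by $-\log \delta$, which forces the restriction to $\delta < 1$; the non-emptiness and boundedness of $F$ is used solely to guarantee the finiteness of $N_\delta(F)$ so that all the logarithmic expressions make sense.
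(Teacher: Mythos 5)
Your proof is correct: the paper does not prove this statement at all but simply quotes it from Falconer's book, and your argument (comparing $\mathcal{H}^s_\delta(F) \leq N_\delta(F)\,\delta^s$ and letting $s \nearrow \hdim F$) is precisely the standard argument given in that cited reference. The only unstated edge case, $\hdim F = 0$, is harmless since $\lbdim F \geq 0$ for non-empty $F$.
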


To prove Theorem \ref{box dimension of graph}, we first find a lower bound for the lower box-counting dimension.

\begin{lemma} \label{lower box dimension}
We have $\lbdim \overline{G(I)} \geq 1$.
\end{lemma}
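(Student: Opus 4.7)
The plan is to bound $\lbdim \overline{G(I)}$ from below by $\hdim \overline{G(I)}$ using Proposition \ref{hdim lbdim ubdim}, and then to observe that the Hausdorff dimension of the closure inherits the already computed value for $G(I)$.

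First, I would note that $\overline{G(I)} = \Gamma(\Sigma)$ by Lemma \ref{Gamma is a closure of G}, and that $\Gamma(\Sigma)$ is compact (Lemma \ref{properties of Gamma}(ii)); in particular, $\overline{G(I)}$ is a non-empty bounded subset of $\mathbb{R}^2$, so Proposition \ref{hdim lbdim ubdim} applies and gives $\hdim \overline{G(I)} \leq \lbdim \overline{G(I)}$.

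Second, I would combine monotonicity of Hausdorff dimension (passing from $G(I)$ to its superset $\overline{G(I)}$) with Theorem \ref{Hausdorff dimension of graph}, yielding
\[
\lbdim \overline{G(I)} \geq \hdim \overline{G(I)} \geq \hdim G(I) = 1,
\]
which is exactly the claimed lower bound. (As a sanity check, an alternative one-line route is to use that the coordinate projection $\Proj \colon \mathbb{R}^2 \to \mathbb{R}$ is $1$-Lipschitz, hence does not increase the lower box-counting dimension, so $1 = \lbdim I = \lbdim \Proj(\overline{G(I)}) \leq \lbdim \overline{G(I)}$; but the Hausdorff-dimension route is shorter given what is already in hand.)

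There is no real obstacle: the lemma is essentially a formal consequence of the previous theorem together with the standard inequality $\hdim \leq \lbdim$ for bounded planar sets. The only minor care needed is to pass to the closure before invoking Proposition \ref{hdim lbdim ubdim}, which is harmless for Hausdorff dimension but is precisely the distinction between the statement proved here and Theorem \ref{Hausdorff dimension of graph}.
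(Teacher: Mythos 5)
Your proof is correct and follows essentially the same route as the paper's: apply Proposition \ref{hdim lbdim ubdim} to get $\lbdim \overline{G(I)} \geq \hdim \overline{G(I)}$, then use monotonicity of Hausdorff dimension together with Theorem \ref{Hausdorff dimension of graph} to obtain $\hdim \overline{G(I)} \geq \hdim G(I) = 1$. Your extra remark about verifying boundedness via $\overline{G(I)} = \Gamma(\Sigma)$ and compactness is a harmless (and careful) addition, but the argument is the same.
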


\begin{proof}
By Proposition \ref{hdim lbdim ubdim}, we have $\lbdim \overline{G(I)} \geq \hdim \overline{G(I)}$. By monotonicity of the Hausdorff dimension and by Theorem \ref{Hausdorff dimension of graph}, we further have $\hdim \overline{G(I)} \geq \hdim G(I) = 1$. Combining the inequalities, the result follows.
\end{proof}

We need the following proposition to find an upper bound for the upper box-counting dimension. The lemma provides an upper bound for the number of finite sequences whose length and the product of all terms are dominated, respectively, by prescribed numbers. The logarithm without base, denoted $\log$, will always mean the natural logarithm.

\begin{proposition}[{\cite[Claim~3]{Luc97}}] \label{Luczak lemma}
Let $p, m \in \mathbb{N}$. Denote by $S(p,m)$ the set of sequences $(\sigma_j)_{j=1}^k \in \mathbb{N}^k$ of finite length $k$ such that $1 \leq k \leq m$ and $\prod_{j=1}^k \sigma_j \leq p$, i.e.,
\[
S(p,m) \coloneqq \left\{ (\sigma_j)_{j=1}^k \in \mathbb{N}^k : 1 \leq k \leq m \text{ and } \prod_{j=1}^k \sigma_j \leq p \right\}.
\]
Then
\[
|S(p,m)| \leq p (2+\log p)^{m-1}.
\]
\end{proposition}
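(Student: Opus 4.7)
The plan is to induct on $m$, keeping $p \in \mathbb{N}$ free at each stage. For the base case $m=1$, the set $S(p,1)$ consists precisely of the singletons $(\sigma_1)$ with $1 \leq \sigma_1 \leq p$, so $|S(p,1)| = p = p(2+\log p)^0$ exactly.

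For the inductive step, fix $m \geq 2$ and assume the bound at $m-1$ for every $p$. I partition $S(p,m)$ according to the value $a \in \{1, \dotsc, p\}$ of the first term $\sigma_1$. Singleton sequences contribute $p$ in total. For a sequence $(a, \sigma_2, \dotsc, \sigma_k)$ of length $k \geq 2$, the tail $(\sigma_2, \dotsc, \sigma_k)$ has length in $\{1, \dotsc, m-1\}$ and product at most $\lfloor p/a \rfloor$, and conversely any element of $S(\lfloor p/a \rfloor, m-1)$ extends uniquely to such a sequence. This yields the recurrence
\[
|S(p,m)| = p + \sum_{a=1}^{p} |S(\lfloor p/a \rfloor, m-1)|.
\]

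I then apply the inductive hypothesis, using the monotonicity bounds $\lfloor p/a \rfloor \leq p/a$ and $\log \lfloor p/a \rfloor \leq \log p$, to estimate each summand by $(p/a)(2+\log p)^{m-2}$. Combined with the standard harmonic estimate $\sum_{a=1}^p 1/a \leq 1 + \log p$, this gives
\[
|S(p,m)| \leq p + p(1+\log p)(2+\log p)^{m-2}.
\]
It remains to check that the right-hand side is at most $p(2+\log p)^{m-1}$. Dividing through by $p(2+\log p)^{m-2}$ (which is $\geq 1$ since $p \geq 1$ and $m \geq 2$), the inequality reduces to $(2+\log p)^{-(m-2)} + (1+\log p) \leq 2 + \log p$, which is immediate because $(2+\log p)^{-(m-2)} \leq 1$.

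The heart of the argument is the recurrence obtained by peeling off the first coordinate; the main obstacle I anticipate is absorbing the extra ``$+p$'' contribution from the singletons into the induction cleanly. This is precisely what the particular shift ``$2+\log p$'' (as opposed to, say, ``$1+\log p$'') is engineered to accommodate, via the identity $(2+\log p) - (1+\log p) = 1$, which exactly matches the leftover singleton term.
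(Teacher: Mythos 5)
The paper does not prove this proposition; it is quoted directly from \L{}uczak's Claim~3, so there is no in-paper argument to compare against. Your proof is correct: the base case is immediate, the recurrence $|S(p,m)| = p + \sum_{a=1}^{p} |S(\lfloor p/a\rfloor, m-1)|$ is an exact identity (singletons contribute $p$, and for $k \ge 2$ stripping the first coordinate $a \le p$ gives a bijection with $S(\lfloor p/a\rfloor, m-1)$, the floor being harmless since the tail's product is a positive integer and $\lfloor p/a\rfloor \ge 1$), the inductive hypothesis applies since $\lfloor p/a\rfloor \in \mathbb{N}$, the monotonicity and harmonic bounds are valid, and the final reduction to $(2+\log p)^{-(m-2)} + (1+\log p) \le 2+\log p$ holds with equality at $m=2$ and strictly for $m \ge 3$. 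Your closing remark correctly pinpoints why the constant $2$ (rather than $1$) in the base of the logarithmic factor is exactly what the induction needs to absorb the singleton term.
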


We use Proposition \ref{Luczak lemma} to obtain an upper bound for the number of all strictly increasing finite sequences of fixed length whose product of digits is bounded from above by a given number.

\begin{lemma} \label{refined bound lemma}
Let $p, m \in \mathbb{N}$. Let $I(p,m)$ be defined by
\[
I(p,m) \coloneqq \left\{ (\sigma_j)_{j=1}^m \in \mathbb{N}^m : \sigma_1 < \sigma_2 < \dotsb < \sigma_m \text{ and } \prod_{j=1}^m \sigma_j \leq p \right\},
\]
i.e., the set $I(p,m)$ consists of all finite sequences of positive integers of length $m$ whose terms are strictly increasing and the product of all terms is at most $p$. Then, we have
\[
|I(p,m)| \leq \frac{p (2+\log p)^{m-1}}{m!}.
\]
\end{lemma}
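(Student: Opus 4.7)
The plan is to relate $I(p,m)$ to the set $S(p,m)$ of Proposition \ref{Luczak lemma} by a simple counting/symmetry argument: each strictly increasing tuple in $I(p,m)$ produces exactly $m!$ distinct length-$m$ tuples in $\mathbb{N}^m$ by permuting its entries, and all of these permuted tuples have the same product and therefore lie in $S(p,m)$.

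Concretely, I would introduce the map
\[
\Phi \colon I(p,m) \times \mathfrak{S}_m \to S(p,m), \qquad \Phi\bigl((\sigma_1,\dotsc,\sigma_m),\pi\bigr) \coloneqq (\sigma_{\pi(1)},\dotsc,\sigma_{\pi(m)}),
\]
where $\mathfrak{S}_m$ denotes the symmetric group on $\{1,\dotsc,m\}$. The image is indeed in $S(p,m)$ since $(\sigma_{\pi(1)},\dotsc,\sigma_{\pi(m)}) \in \mathbb{N}^m$ has length $m$ and product $\prod_{j=1}^m \sigma_j \leq p$. Next I would verify that $\Phi$ is injective: from a tuple $(\tau_1,\dotsc,\tau_m) \in \Phi(I(p,m)\times\mathfrak{S}_m)$, the unique sorting $\tau_{i_1}<\tau_{i_2}<\dotsb<\tau_{i_m}$ recovers the element of $I(p,m)$ (here the strict inequality in the definition of $I(p,m)$ is essential, as it guarantees the entries are pairwise distinct), and then $\pi$ is uniquely determined by the positions of the sorted entries.

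From the injectivity of $\Phi$ and Proposition \ref{Luczak lemma}, we obtain
\[
m! \cdot |I(p,m)| \;=\; |I(p,m) \times \mathfrak{S}_m| \;\leq\; |S(p,m)| \;\leq\; p(2+\log p)^{m-1},
\]
and dividing by $m!$ yields the stated bound.

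There is no substantive obstacle: the argument amounts to the observation that strict increase makes all entries distinct, so the $m!$ rearrangements are genuinely different, turning a trivial surjection onto the sorted representative into a sharp factor of $m!$ that can be divided out of Łuczak's estimate. The only point worth highlighting in the write-up is precisely this role of the strictness in $\sigma_1<\dotsb<\sigma_m$, which is what prevents overcounting.
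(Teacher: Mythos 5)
Your proof is correct and takes essentially the same approach as the paper: both exploit the strict inequality to see that the $m!$ permutations of any tuple in $I(p,m)$ are pairwise distinct and all lie in $S(p,m)$, giving $m!\,|I(p,m)| \leq |S(p,m)|$, and then apply Proposition \ref{Luczak lemma}. Your write-up is simply a more formal packaging of the same counting argument via an explicit injection.
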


\begin{proof}
Let $S(p,m)$ be as in Proposition \ref{Luczak lemma}. Obviously, $I(p,m) \subseteq S(p,m)$. For any $(\sigma_j)_{j=1}^m \in I(p,m)$, all the terms $\sigma_j$, $1 \leq j \leq m$, are distinct, so that there are $m!$ ways to form a sequence of length $m$ with the same terms. It is clear that all the sequences formed so are members of $S(p,m)$. Thus $m! |I(p,m)| \leq |S(p,m)|$. Therefore, the desired upper bound for $|I(p,m)|$ follows from Proposition \ref{Luczak lemma}.
\end{proof}

The following inequalities are well-known lower and upper bounds for the factorial function. These bounds are rougher than the famous Stirling's formula, but the proof is elementary and they are satisfactory enough in our argument.

\begin{proposition}[{\cite[Lemma~10.1]{KL16}}] \label{factorial bound}
For every $n \in \mathbb{N}$, we have
\begin{align*}
\frac{n^n}{e^{n-1}} \leq n! \leq \frac{n^{n+1}}{e^{n-1}}. 
\end{align*}
\end{proposition}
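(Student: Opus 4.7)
The plan is to prove both inequalities simultaneously by induction on $n$, reducing the inductive step to the two well-known bounds
\[
\left(1 + \frac{1}{n}\right)^n \leq e \leq \left(1 + \frac{1}{n}\right)^{n+1}
\]
that hold for every $n \in \mathbb{N}$. These bounds, in turn, follow from the monotonicity of the sequences $((1+1/n)^n)$ and $((1+1/n)^{n+1})$ together with the fact that both converge to $e$; alternatively, they are immediate from concavity of the logarithm.

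For the base case $n=1$, both the lower and upper bound reduce to $1 \leq 1! \leq 1$, which is trivial. Assume now that the inequalities hold for some $n \in \mathbb{N}$; I want to deduce the corresponding bounds for $n+1$. Writing $(n+1)! = (n+1) \cdot n!$ and applying the inductive hypothesis, the lower bound to be shown,
\[
(n+1) \cdot \frac{n^n}{e^{n-1}} \geq \frac{(n+1)^{n+1}}{e^n},
\]
is equivalent after cancellation to $e \geq (1+1/n)^n$, which is the left half of the displayed pair. Similarly, the upper bound
\[
(n+1) \cdot \frac{n^{n+1}}{e^{n-1}} \leq \frac{(n+1)^{n+2}}{e^n}
\]
rearranges to $e \leq (1+1/n)^{n+1}$, which is the right half. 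Thus both halves of the inductive step follow at once.

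There is no real obstacle here: the only content is recognizing that the two bounds to be proved are precisely the two classical inequalities for $e$ in disguise. If one prefers to avoid invoking monotonicity of $(1+1/n)^n$, the same inequalities are obtainable from the Taylor expansion of $\log(1+1/n)$, or from comparing $\log(n!) = \sum_{k=1}^{n}\log k$ with $\int_1^n \log x \, dx = n\log n - n + 1$, but the induction above is the most economical route and fits the paper's style of recording only what is needed for the subsequent box-counting argument.
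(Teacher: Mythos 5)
Your induction is correct: the base case holds with equality, and cancelling $(n+1)$ and a common power of $e$ in each inductive step reduces the lower and upper bounds exactly to $e \geq (1+1/n)^n$ and $e \leq (1+1/n)^{n+1}$, respectively. The paper, however, only gestures at a proof, saying the core idea is the monotonicity of $x \mapsto \log x$ and deferring details to the cited reference \cite{KL16}; the argument it has in mind is the direct integral comparison you mention as an alternative: bound $\log(n!) = \sum_{k=1}^n \log k$ between $\int_1^n \log x\,dx = n\log n - n + 1$ (from below) and $\int_1^n \log x\,dx + \log n$ (from above, by shifting the comparison rectangles), then exponentiate. Your induction sidesteps calculus entirely, needing only the two classical sequence bounds for $e$, which is arguably cleaner and more self-contained for a paper aiming at economy; the integral route, on the other hand, produces both bounds in a single pass without recursion and generalizes more readily to sharper Stirling-type estimates. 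Both rest on essentially the same analytic fact (concavity/monotonicity of $\log$), so this is a difference of packaging rather than substance, but the packaging genuinely differs from what the paper points to.
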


\begin{proof}
The core idea of the proof is the fact that the map $x \mapsto \log x$ is increasing on $(0, \infty)$. We refer the interested readers to \cite{KL16} in which the detailed proof is given.
\end{proof}

Now we are in a position to establish the upper bound by considering a suitable covering of $\Gamma (\Sigma)$.

\begin{lemma} \label{upper box dimension}
We have $\ubdim \overline{G(I)} \leq 1$.
\end{lemma}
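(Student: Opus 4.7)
The plan is to exhibit, for each sufficiently small $\varepsilon > 0$, an explicit cover of $\Gamma (\Sigma) = \overline{G(I)}$ (cf.\ Lemma~\ref{Gamma is a closure of G}) by closed $\varepsilon$-balls whose cardinality $N(\varepsilon)$ satisfies $N(\varepsilon) \leq (1/\varepsilon)^{1+o(1)}$ as $\varepsilon \to 0^+$; the definition of upper box-counting dimension then delivers $\ubdim \overline{G(I)} \leq 1$.

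Fix $\varepsilon > 0$ small and let $n = n(\varepsilon)$ be the smallest positive integer with $\sqrt{1+n^2}/(n+1)! \leq \varepsilon$. By Proposition~\ref{factorial bound}, $n = (1+o(1))\log(1/\varepsilon)/\log\log(1/\varepsilon)$. For each $\tau = (\tau_k)_{k \in \mathbb{N}} \in \Sigma$ and each $m \in \{0, 1, \dotsc, n\}$, Lemma~\ref{epsilon cover} furnishes the Euclidean bound $|\Gamma(\tau) - \Gamma(\tau^{(m)})| \leq \sqrt{1+m^2}/(\tau_1 \dotsm \tau_m \tau_{m+1})$ (with $\tau_1 \dotsm \tau_0 \coloneqq 1$, the right-hand side being $0$ whenever $\tau_j = \infty$ for some $j \leq m+1$). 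The $m=n$ bound is always $\leq \sqrt{1+n^2}/(n+1)! \leq \varepsilon$ by our choice of $n$, so I will take $m(\tau) \in \{0, \dotsc, n\}$ to be the smallest $m$ for which the estimate does not exceed $\varepsilon$ and define the set of centers $C_\varepsilon \coloneqq \{\Gamma(\tau^{(m(\tau))}) : \tau \in \Sigma\} \subseteq \bigcup_{m=0}^n \Gamma(\Sigma_m)$; the closed $\varepsilon$-balls around the points of $C_\varepsilon$ then cover $\Gamma(\Sigma)$ by construction.

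To bound $|C_\varepsilon|$, observe that when $m(\tau) = m \geq 1$, minimality forces the level $m-1$ bound to fail, hence $\tau_1 \dotsm \tau_m < Q_m$, where $Q_m \coloneqq \sqrt{1+(m-1)^2}/\varepsilon$. Consequently $\tau^{(m)} \in \Sigma_m$ lies in the set $I(\lfloor Q_m \rfloor, m)$ of Lemma~\ref{refined bound lemma}, of cardinality at most $Q_m(2+\log Q_m)^{m-1}/m!$. Together with the single level-$0$ center $\Gamma((\infty,\infty,\dotsc)) = (0,0)$, and setting $L \coloneqq 2 + \log Q_n = O(\log(1/\varepsilon))$, this yields
\[
|C_\varepsilon| \leq 1 + Q_n \sum_{m=1}^{n} \frac{L^{m-1}}{m!}.
\]

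The heart of the argument is the asymptotic estimate of this sum. Because $n = (1+o(1))\log(1/\varepsilon)/\log\log(1/\varepsilon) \ll L$, the ratio $L/(m+1)$ between consecutive terms is much larger than $1$ throughout $1 \leq m \leq n$, so the partial sum is controlled up to a factor of $n$ by its final term $L^{n-1}/n!$. Applying the lower bound $n! \geq n^n/e^{n-1}$ from Proposition~\ref{factorial bound} gives $L^{n-1}/n! \leq e^{-1}L^{-1}(eL/n)^n$; with $L = \log(1/\varepsilon)$ and $n \sim L/\log L$ one computes $\log(eL/n) = 1 + \log\log L + o(1)$, so the exponent $n\log(eL/n)$ equals $n + n\log\log L + o(n)$, and the dominant contribution $n\log\log L \sim \log(1/\varepsilon)\log\log\log(1/\varepsilon)/\log\log(1/\varepsilon)$ is $o(\log(1/\varepsilon))$. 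Hence $L^{n-1}/n! = (1/\varepsilon)^{o(1)}$, and combined with $Q_n = O(n/\varepsilon) = (1/\varepsilon)^{1+o(1)}$ this produces $|C_\varepsilon| \leq (1/\varepsilon)^{1+o(1)}$. The main obstacle is precisely this asymptotic: simply extending the sum to infinity would give $(e^L - 1)/L = \Theta(1/(\varepsilon\log(1/\varepsilon)))$ and thus only $|C_\varepsilon| = O(1/(\varepsilon^2\log(1/\varepsilon)))$, i.e.\ the vacuous $\ubdim \leq 2$; it is essential that $n$ sit at the threshold where $(n+1)!$ first exceeds $\sqrt{1+n^2}/\varepsilon$, truncating the sum well before its peak so that the dominant term is subpolynomial in $1/\varepsilon$.
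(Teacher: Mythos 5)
Your proposal is correct and follows essentially the same strategy as the paper's own proof. The key ideas match up almost exactly: you truncate each $\tau \in \Sigma$ at the smallest level $m(\tau)$ at which the partial product of digits has grown large enough to guarantee an $\varepsilon$-approximation via Lemma~\ref{epsilon cover}, count the resulting centers $\Gamma(\tau^{(m)})$ with $\Lambda$uczak's estimate through Lemma~\ref{refined bound lemma}, observe that the resulting sum $\sum_{m \le n} L^{m-1}/m!$ is geometrically increasing and hence dominated by its last term, and bound that last term by $(1/\varepsilon)^{o(1)}$ using the factorial bound of Proposition~\ref{factorial bound}. The paper does exactly this, with $\varepsilon$ reparameterized as $2e^{-M}$ and the threshold at level $k$ chosen to be $ke^M$ rather than your $\sqrt{1+(k-1)^2}/\varepsilon$; these two thresholds differ by a bounded factor, so the counting and the dominant-term asymptotics are the same.

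One small wrinkle you should be aware of: the Euclidean bound $|\Gamma(\tau)-\Gamma(\tau^{(m)})| \le \sqrt{1+m^2}/(\tau_1 \dotsm \tau_m \tau_{m+1})$ is a consequence of Lemma~\ref{epsilon cover} only for $m \ge 1$. At $m=0$ your formula reads $1/\tau_1$, but $\mathcal{E}^*(\tau^{(0)})=0$ while $|\mathcal{E}^*(\tau)|$ can be as large as $1/(\tau_1\tau_2)$, so the true Euclidean bound at level $0$ is $\sqrt{1/\tau_1^2 + 1/(\tau_1\tau_2)^2}$, which exceeds $1/\tau_1$. This is a constant-factor slip: replacing $\sqrt{1+0^2}$ by, say, $\sqrt{2}$ in the level-$0$ test (and hence replacing $Q_1=1/\varepsilon$ by $\sqrt{2}/\varepsilon$) repairs it without disturbing anything downstream, since the level-$1$ count stays $O(1/\varepsilon)$ and the asymptotic $|C_\varepsilon|\le(1/\varepsilon)^{1+o(1)}$ is unchanged. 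The paper sidesteps this entirely by using the $\mathcal{E}^*(\sigma) \ge -1/(\sigma_1\sigma_2)$ estimate from Theorem~\ref{boundedness theorem} in its $\Lambda_1$ case rather than a degenerate instance of Lemma~\ref{epsilon cover}.
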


\begin{proof}
Let $\varepsilon \coloneqq 2 e^{-M}$ with $M > 0$ large enough. Take $n = n(M) \in \mathbb{N}$ such that $(n-1)! \leq e^M \leq n!$. Clearly, $n \to \infty$ as $M \to \infty$ and vice versa. Then for any $(\sigma_k)_{k \in \mathbb{N}} \in \Sigma$, by \eqref{sigma n bound}, we have
\begin{align} \label{bound for product of n+1 digits}
\sigma_1 \sigma_2 \dotsm \sigma_n \sigma_{n+1} \geq (n+1)! \geq (n+1)  e^M.
\end{align}
We obtain lower and upper bounds for $M$ by means of Proposition \ref{factorial bound}:
\begin{align} \label{bounds for M}
(n-1) \log (n-1) - (n-2) \leq M \leq (n+1) \log n - (n-1). 
\end{align}
Since $(n-1)!/e^n \to \infty$ as $n \to \infty$ but $(n-1)!/e^M \leq 1$ by our choice of $n$, it must be that $n < M$.

We first write $\Sigma$ as a union of finitely many sets. Define 
\begin{align*}
\Lambda_1 &\coloneqq \{ (\sigma_j)_{j \in \mathbb{N}} \in \Sigma : \sigma_1 \geq e^M \},
\end{align*}
and for $k \geq 2$, define
\[
\Lambda_k \coloneqq \left\{ (\sigma_j)_{j \in \mathbb{N}} \in \Sigma : \prod_{j=1}^{k-1} \sigma_j < (k-1) e^M \text{ and } \prod_{j=1}^k \sigma_j \geq k e^M \right\}.
\]
We claim that $\Sigma = \bigcup_{k=1}^{n+1} \Lambda_k$. To prove the claim, we need to show that $\Sigma \subseteq \bigcup_{k=1}^{n+1} \Lambda_k$ since the reverse inclusion is obvious. Let $\sigma \coloneqq (\sigma_j)_{j \in \mathbb{N}} \in \Sigma$ and assume $\sigma \in \Sigma \setminus \bigcup_{k=1}^n \Lambda_k$. Then $\sigma_1 < e^M$ since $\sigma \not \in \Lambda_1$, $\sigma_1 \sigma_2 < 2 e^M$ since $\sigma \not \in \Lambda_2$, $\dotsc$, $\prod_{j=1}^{n-1} \sigma_j < (n-1) e^M$ since $\sigma \not \in \Lambda_{n-1}$, and $\prod_{j=1}^n \sigma_j < n e^M$ since $\sigma \not \in \Lambda_n$. Since we have $\prod_{j=1}^{n+1} \sigma_j \geq (n+1) e^M$ by \eqref{bound for product of n+1 digits}, it must be that $\sigma \in \Lambda_{n+1}$. Therefore, $\sigma \in \bigcup_{k=1}^{n+1} \Lambda_k$ and this proves the claim.

For each $1 \leq k \leq n+1$, our aim is to find a covering of $\Gamma (\Lambda_k)$ consisting of squares of side length $\varepsilon = 2 e^{-M}$ and to determine an upper bound, which we will denote by $a_k$, of the number of required squares.

Let $\sigma \coloneqq (\sigma_j)_{j \in \mathbb{N}} \in \Lambda_1$. Then $\varphi (\sigma) \leq 1/\sigma_1$ by definition of $\varphi$, and so $\varphi (\sigma) \in [0, e^{-M}]$ by definition of $\Lambda_1$. We know that $- 1/(\sigma_1 \sigma_2) \leq \mathcal{E}^*(\sigma) \leq 0$ from Theorem \ref{boundedness theorem} and its proof. Since $\sigma_1 \geq e^M$ and $\sigma_2 > \sigma_1$, it follows that
\begin{align*}
\big| \mathcal{E}^* (\sigma) \big|
&\leq \frac{1}{\sigma_1 \sigma_2} \leq \frac{1}{e^M (e^M+1)} < e^{-M} < \varepsilon.
\end{align*}
Hence, $\Gamma (\Lambda_1)$ can be covered by $a_1 \coloneqq 1$ square of side length $\varepsilon= 2e^{-M}$.

Let $k \in \{ 2, \dotsc, n+1 \}$. For every $\sigma \coloneqq (\sigma_j)_{j \in \mathbb{N}} \in \Lambda_k$, since $\prod_{j=1}^k \sigma_j \geq k e^M$, we have by Lemma \ref{epsilon cover} that
\[
\big| \varphi (\sigma) - \varphi (\sigma^{(k-1)}) \big| \leq \frac{1}{\sigma_1 \sigma_2 \dotsm \sigma_k} \leq e^{-M}
\quad \text{and} \quad
\big| \mathcal{E}^*(\sigma) - \mathcal{E}^*(\sigma^{(k-1)}) \big| \leq \frac{k-1}{\sigma_1 \sigma_2 \dotsm \sigma_k} \leq e^{-M}.
\]
This shows that for a fixed $\tau \coloneqq (\tau_j)_{j \in \mathbb{N}} \in \Sigma_{k-1}$, we can cover $\Gamma(\Lambda_k \cap \Upsilon_\tau)$ by one square of side length $2e^{-M} = \varepsilon$. Since $\prod_{j=1}^{k-1} \sigma_j < (k-1) e^M$ by definition of $\Lambda_k$, using Lemma \ref{refined bound lemma}, we see that at most
\[
a_k \coloneqq \frac{1}{(k-1)!} (k-1) e^M ( 2 + M + \log (k-1) )^{k-2}
\]
squares of side length $2 e^{-M} = \varepsilon$ are needed to cover $\Gamma (\Lambda_k)$.

Denote by $N_\varepsilon$ the smallest number of squares of side length $\varepsilon$ needed to cover $\Gamma(\Sigma)$. Clearly, $\Gamma(\Sigma) = \bigcup_{k=1}^{n+1} \Gamma(\Lambda_k)$. Then, by the discussion so far,
\[
N_\varepsilon \leq \sum_{k=1}^{n+1} a_k = 1 + \sum_{k=2}^{n+1} \frac{(k-1) e^M ( 2 + M + \log (k-1) )^{k-2}}{(k-1)!}.
\]
Now note that $a_1 < a_2 = e^M$, and for $2 \leq k \leq n$,
\begin{align*}
\frac{a_{k+1}}{a_k} = \frac{k}{k-1} \left( \frac{2+M+\log k}{2+M+\log (k-1)} \right)^{k-1} \frac{2+M+\log (k-1)}{k} > 1 \cdot 1^{k-1} \cdot \frac{M}{n} > 1,
\end{align*}
where the last inequality holds true since $M > n$. So $a_{n+1}> a_n > \dots > a_1$, and it follows that
\begin{align*}
N_\varepsilon \leq \sum_{k=1}^{n+1} a_{n+1} = (n+1) \frac{n e^M ( 2 + M + \log n )^{n-1}}{n!}.
\end{align*}
Recall that by our choice of $n$, we have $e^M \leq n!$ and $n < M$, and so
\[
N_\varepsilon \leq (n+1) n ( 2 + M + \log n )^{n-1} < (M+1)^2 ( 2 + M + \log M )^{n-1}.
\]
Now
\begin{align*}
\frac{\log N_\varepsilon}{\log (1/\varepsilon)} 
&\leq \frac{2\log (M+1) + (n-1) \log ( 2 + M + \log M )}{M - \log 2} \\
&= \frac{2\log (M+1)}{M - \log 2} + \frac{(n-1)\log M}{M - \log 2} + \frac{(n-1) \log ( (2 + \log M)/M + 1)}{M - \log 2},
\end{align*}
and we will estimate the upper limit of each of the three terms in the second line above. Clearly, the limit of the first term is $0$ as $M \to \infty$. For the second term, using \eqref{bounds for M}, we have
\[
\frac{(n-1) \log M}{M - \log 2} \leq \frac{(n-1) \log (n+1) + (n-1) \log (\log n)}{(n-1) \log (n-1)-(n-2) -\log 2} \to 1
\]
as $n \to \infty$. For the last term, notice that $\log ( (2 + \log M)/M + 1) \to 0$ as $M \to \infty$ and $n-1 < M-\log 2$, to deduce that the limit is $0$. Thus, since $\overline{G(I)} = \Gamma (\Sigma)$ by Lemma \ref{Gamma is a closure of G}, we finally obtain
\[
\ubdim \overline{G(I)} = \ubdim \Gamma (\Sigma) = \limsup_{\varepsilon \to 0} \frac{\log N_\varepsilon}{\log (1/\varepsilon)} \leq 1.
\]
\end{proof}

\begin{proof}[Proof of Theorem \ref{box dimension of graph}]
In view of Proposition \ref{hdim lbdim ubdim}, combining Lemmas \ref{lower box dimension} and \ref{upper box dimension} gives $\lbdim \overline{G(I)} = \ubdim \overline{G(I)} \allowbreak = 1$. Since taking closure of a set does not alter the upper and lower box-counting dimensions by \cite[Proposition~2.6]{Fal14}, it follows that $\lbdim G(I) = \ubdim G(I) = 1$ and therefore, we conclude that $\bdim G(I) = 1$.
\end{proof}

\begin{remark}
We point out that Lemma \ref{upper box dimension} gives an alternative proof of the upper bound part in Theorem \ref{Hausdorff dimension of graph}. In fact, due to Proposition \ref{hdim lbdim ubdim}, we have $\hdim \overline{G(I)} \leq \lbdim \overline{G(I)} \leq \ubdim \overline{G(I)}$ and, furthermore, $\ubdim \overline{G(I)} \leq 1$ by Lemma \ref{upper box dimension}. But then monotonicity of the Hausdorff dimension implies that $\hdim G(I) \leq 1$, which is the upper bound in Theorem \ref{Hausdorff dimension of graph}. 
\end{remark}

\subsection{The covering dimension of the graph of $\mathcal{E}(x)$}

The graph of $\mathcal{E} \colon I \to \mathbb{R}$ has the same Hausdorff dimension and box-counting dimension, both equaling one. In this subsection, we show that the covering dimension of the graph of $\mathcal{E}$ is zero, so that it is strictly smaller than the Hausdorff dimension.

\begin{theorem} \label{covering dimension theorem}
The graph of the error-sum function $\mathcal{E} \colon I \to \mathbb{R}$ has the covering dimension zero, i.e., $\covdim G(I) = 0$.
\end{theorem}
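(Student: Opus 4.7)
The plan is to exploit the homeomorphism $\Gamma|_\Sigma \colon \Sigma \to \overline{G(I)}$ furnished by Lemma \ref{properties of Gamma}(i) together with Lemma \ref{Gamma is a closure of G}, thereby reducing the determination of $\covdim G(I)$ to showing that the symbolic space $\Sigma$ is zero-dimensional. The payoff is that $\Sigma$ lives inside $\mathbb{N}_\infty^\mathbb{N}$, where the one-point compactification topology of each factor supplies an abundance of clopen sets, making zero-dimensionality essentially automatic there.

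First I would verify that $\covdim \mathbb{N}_\infty = 0$: each singleton $\{n\}$ with $n \in \mathbb{N}$ is clopen in $(\mathbb{N}_\infty, \mathcal{T})$, and the tails $\{n, n+1, \dotsc, \infty\}$ form a clopen neighbourhood basis at $\infty$, giving $\mathbb{N}_\infty$ a countable clopen basis. The product topology on $\mathbb{N}_\infty^\mathbb{N}$ then has a clopen basis consisting of finite products of such clopen cylinders, so $\covdim \mathbb{N}_\infty^\mathbb{N} = 0$. Since $\Sigma$ is a closed subspace of $\mathbb{N}_\infty^\mathbb{N}$ by Lemma \ref{Sigma is closed}, and covering dimension is monotone on subspaces of separable metric spaces, $\covdim \Sigma \leq 0$, with equality because $\Sigma$ is non-empty.

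By Lemmas \ref{properties of Gamma}(i) and \ref{Gamma is a closure of G}, $\Sigma$ is homeomorphic to $\overline{G(I)}$, whence $\covdim \overline{G(I)} = \covdim \Sigma = 0$. A final appeal to monotonicity along the inclusion $G(I) \subseteq \overline{G(I)}$ yields $\covdim G(I) \leq 0$, and equality holds since $G(I)$ is non-empty. The only technical point I anticipate is the appeal to monotonicity of $\covdim$ on non-closed subspaces, but this is standard within the separable metric setting we operate in, so the argument reduces to a routine transfer of zero-dimensionality along $\Gamma$. If one preferred to sidestep monotonicity on non-closed subspaces entirely, one could instead produce an explicit clopen basis of $G(I)$ by pulling back cylinder sets $\Upsilon_\sigma$ through $\Gamma \circ f$ and using the jump structure described in Theorem \ref{discontinuity theorem} at the rational points; but the homeomorphism route above is considerably cleaner.
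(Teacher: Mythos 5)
Your proposal is correct and takes essentially the same route as the paper: pass to the compact set $\Gamma(\Sigma) = \overline{G(I)}$ homeomorphic to $\Sigma$ via Lemmas \ref{properties of Gamma}(i) and \ref{Gamma is a closure of G}, establish that it is zero-dimensional by tracing zero-dimensionality down from $\mathbb{N}_\infty^\mathbb{N}$, and finish with monotonicity of covering dimension in the metrizable setting. The only cosmetic difference is that the paper phrases the zero-dimensionality of $\Gamma(\Sigma)$ via total separation of a non-empty compact Hausdorff space (Proposition \ref{covering dimension lemma}), while you exhibit a clopen basis of $\mathbb{N}_\infty^\mathbb{N}$ and invoke the coincidence of dimension notions for separable metric spaces; both are standard characterizations and the rest of the argument is identical.
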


We say that a topological space $X$ is {\em totally separated} if for every pair of distinct points $x, y \in X$, there are disjoint open sets $U$ and $V$ such that $x \in U$, $y \in V$, and $X = U \cup V$. The following propositions will be used for the proof of the theorem.

\begin{proposition} [{\cite[Theorem~2.7.1]{Coo15}}] \label{covering dimension lemma}
Let $X$ be a non-empty compact Hausdorff space. Then $X$ is totally separated if and only if $\covdim X = 0$.
\end{proposition}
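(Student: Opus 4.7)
The plan is to establish both directions of the equivalence separately, with the core observation being that in a compact Hausdorff space, total separation is strong enough to furnish a basis of clopen sets, and a basis of clopen sets is strong enough to refine any finite open cover by disjoint open sets.

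For the easier direction $\covdim X = 0 \Rightarrow$ totally separated, I would start with two distinct points $x, y \in X$ and apply the dimension hypothesis to the two-element open cover $\{X \setminus \{y\}, X \setminus \{x\}\}$, which makes sense because singletons are closed in a Hausdorff space. By assumption this cover admits a finite refinement by pairwise disjoint open sets $V_1, \dots, V_k$ whose union is $X$. Exactly one member, say $V_{j_0}$, contains $x$; since $V_{j_0}$ refines the cover and cannot lie in $X \setminus \{x\}$, it must lie in $X \setminus \{y\}$, so $y$ is contained in the open complement $\bigcup_{i \neq j_0} V_i = X \setminus V_{j_0}$. Setting $U := V_{j_0}$ and $V := X \setminus V_{j_0}$ produces the required total separation of $x$ and $y$.

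For the converse, totally separated and compact Hausdorff $\Rightarrow \covdim X = 0$, the crucial intermediate claim is that $X$ has a basis of clopen sets. Given $x$ and an open neighborhood $U$ of $x$, I would let $K := X \setminus U$, which is closed in the compact space $X$ and hence compact. For each $y \in K$, total separation supplies disjoint open $A_y \ni x$ and $B_y \ni y$ with $A_y \cup B_y = X$, so $A_y = X \setminus B_y$ is clopen. The family $\{B_y : y \in K\}$ covers $K$, so by compactness finitely many $B_{y_1}, \dots, B_{y_m}$ suffice, and then $W := \bigcap_{i=1}^{m} A_{y_i}$ is clopen as a finite intersection of clopen sets, contains $x$, and satisfies $W \cap K = \emptyset$, i.e.\ $W \subseteq U$. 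With a clopen basis in hand, given any finite open cover $\{U_1, \dots, U_n\}$ I would pick for each $x$ a clopen $W_x$ with $x \in W_x \subseteq U_{i(x)}$, extract a finite subcover $W_{x_1}, \dots, W_{x_k}$ by compactness, and disjointify by $V_j := W_{x_j} \setminus \bigcup_{i < j} W_{x_i}$. Each $V_j$ is again clopen (finite Boolean combinations of clopen sets are clopen), the $V_j$ are pairwise disjoint, $\bigcup_j V_j = \bigcup_j W_{x_j} = X$, and $V_j \subseteq W_{x_j} \subseteq U_{i(x_j)}$, giving a finite disjoint open refinement of the given cover.

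The main obstacle I anticipate is the clopen-basis step, because total separation only separates one pair of points at a time, and I must combine infinitely many such separations into a single clopen set sitting inside a prescribed open neighborhood; compactness of the complementary closed set $K$ is precisely what allows me to reduce to a finite intersection of clopen sets, keeping the result clopen. Once this basis is produced, the remaining manipulations are elementary Boolean algebra of clopen sets, and the other implication is essentially a direct unpacking of the definition of $\covdim X = 0$ against a well-chosen two-element cover.
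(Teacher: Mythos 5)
Your argument is correct. Note, however, that the paper does not prove this statement at all: it is quoted verbatim from Coornaert's book (Theorem~2.7.1 of \cite{Coo15}), so there is no internal proof to compare against; what you have written is essentially the standard textbook argument, supplied in full. Both directions check out: the implication $\covdim X = 0 \Rightarrow$ totally separated only needs points to be closed (so that $\{X \setminus \{y\},\, X \setminus \{x\}\}$ is a two-element open cover) plus the disjoint open refinement, and your identification $X \setminus V_{j_0} = \bigcup_{i \neq j_0} V_i$ is valid because the refinement is a pairwise disjoint cover. For the converse, the key step --- producing a clopen neighborhood basis by covering the compact set $K = X \setminus U$ with the sets $B_y$, passing to a finite subcover, and intersecting the complementary clopen sets $A_{y_i}$ --- is exactly where compactness and total separation interact, and your subsequent disjointification $V_j := W_{x_j} \setminus \bigcup_{i<j} W_{x_i}$ yields the required finite disjoint open refinement, giving $\covdim X \leq 0$. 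The only point left implicit is the trivial one that $X \neq \varnothing$ forces $\covdim X \geq 0$ (the paper invokes this separately via \cite[Example~1.1.9]{Coo15}), so the equality $\covdim X = 0$ follows.
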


\begin{proposition} [{\cite[Theorem~1.8.3]{Coo15}}] \label{monotonicity lemma}
If $X$ is a metrizable space and $Y \subseteq X$, then $\covdim Y \leq \covdim X$.
\end{proposition}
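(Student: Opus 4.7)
The plan is to reduce the question to total separatedness of the symbolic space $\Sigma$, and then to invoke the characterization of zero-dimensional compact Hausdorff spaces via clopen separation. First, I would observe that $G(I) \subseteq \overline{G(I)}$ and, by Lemma \ref{Gamma is a closure of G}, $\overline{G(I)} = \Gamma(\Sigma)$. Since $\Gamma(\Sigma) \subseteq \mathbb{R}^2$ sits inside a metrizable space, Proposition \ref{monotonicity lemma} yields $\covdim G(I) \leq \covdim \Gamma(\Sigma)$, so it suffices to show $\covdim \Gamma(\Sigma) = 0$. Next, by Lemma \ref{properties of Gamma}(i) the map $\Gamma \colon \Sigma \to \Gamma(\Sigma)$ is a homeomorphism, hence $\covdim \Gamma(\Sigma) = \covdim \Sigma$. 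Since $\Sigma$ is non-empty, compact, and Hausdorff (Lemma \ref{Sigma is compact}), Proposition \ref{covering dimension lemma} reduces the problem further to proving that $\Sigma$ is totally separated.

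The substantive step is therefore the verification of total separatedness of $\Sigma$. I would first argue that $(\mathbb{N}_\infty, \mathcal{T})$ itself is totally separated: given distinct $a, b \in \mathbb{N}_\infty$, at least one of them, say $a$, lies in $\mathbb{N}$, and then $\{a\}$ is clopen in $\mathbb{N}_\infty$ (its complement either lies in $\mathbb{N}$ or contains $\infty$ together with a cofinite subset of $\mathbb{N}$, hence is open in $\mathcal{T}$). Thus $\{a\}$ and $\mathbb{N}_\infty \setminus \{a\}$ are disjoint clopen sets covering $\mathbb{N}_\infty$ that separate $a$ from $b$. Given distinct $\sigma, \tau \in \Sigma$, I pick any index $k$ with $\sigma_k \neq \tau_k$, choose disjoint clopen $U, V \subseteq \mathbb{N}_\infty$ with $\sigma_k \in U$, $\tau_k \in V$, and $U \cup V = \mathbb{N}_\infty$, and pull these back along the $k$th projection $\pi_k \colon \mathbb{N}_\infty^\mathbb{N} \to \mathbb{N}_\infty$. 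The preimages $\pi_k^{-1}(U)$ and $\pi_k^{-1}(V)$ are clopen in the product topology, disjoint, and cover $\mathbb{N}_\infty^\mathbb{N}$; intersecting with $\Sigma$ provides the required clopen separation of $\sigma$ and $\tau$ within $\Sigma$.

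Combining the above chain, $\covdim G(I) \leq \covdim \Gamma(\Sigma) = \covdim \Sigma = 0$, and since $G(I)$ is non-empty the reverse inequality $\covdim G(I) \geq 0$ is automatic. I do not expect a serious obstacle here: every step is essentially topological bookkeeping that repackages machinery already assembled in the paper (the homeomorphism $\Gamma$, the compactness of $\Sigma$, and the characterization via total separatedness). The only piece demanding a direct argument is the clopen separation in $\mathbb{N}_\infty$, which is a one-line consequence of the description of $\mathcal{T}$, and its transfer to $\Sigma$ via a single coordinate projection.
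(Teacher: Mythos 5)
There is a genuine mismatch here: the statement you were asked to prove is the subspace monotonicity of the covering dimension --- if $X$ is metrizable and $Y \subseteq X$, then $\covdim Y \leq \covdim X$. Your proposal never addresses this. Instead, you have written out a proof that $\covdim G(I) = 0$, which is the content of Theorem \ref{covering dimension theorem} together with Lemma \ref{totally separated lemma}, and in doing so you explicitly \emph{invoke} Proposition \ref{monotonicity lemma} (``Proposition \ref{monotonicity lemma} yields $\covdim G(I) \leq \covdim \Gamma(\Sigma)$''). As an argument for the assigned statement this is circular: you assume the very inequality you were supposed to establish, and the total-separatedness of $\Sigma$, the homeomorphism $\Gamma$, and Proposition \ref{covering dimension lemma} are all irrelevant to it. Note also that in the paper this proposition carries no proof at all --- it is quoted from \cite[Theorem~1.8.3]{Coo15} --- so what was expected was either that citation or a genuine argument about covering dimension itself.

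A real proof of the monotonicity statement has to work with the definition of $\covdim$ via orders of finite open covers and refinements, and the metrizability hypothesis is not decorative: the subset theorem fails for general topological spaces, and for arbitrary (not necessarily closed) subspaces of a metrizable space one typically either proves it directly using that open sets in metrizable spaces are $F_\sigma$ (so covers of $Y$ by traces $U_i \cap Y$ can be handled by a countable closed sum argument), or passes through the coincidence of $\covdim$ with the large inductive dimension on metrizable spaces. None of that machinery appears in your write-up. What you have actually produced is, modulo reorganization, the paper's own proofs of Lemma \ref{totally separated lemma} and Theorem \ref{covering dimension theorem} (the paper separates $\mathbb{N}_\infty$ being totally separated, passes to the product and to $\Sigma$, and then applies Propositions \ref{covering dimension lemma} and \ref{monotonicity lemma} exactly as you do), so the topological content there is fine --- it is just the answer to a different question than the one posed.
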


The theorem is a consequence of the following lemma.

\begin{lemma} \label{totally separated lemma}
We have $\covdim \Gamma (\Sigma) = 0$.
\end{lemma}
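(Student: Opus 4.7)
The plan is to reduce to Proposition \ref{covering dimension lemma}. Since $\Gamma(\Sigma)$ is a non-empty compact (Lemma \ref{properties of Gamma}(ii)) Hausdorff subspace of $\mathbb{R}^2$, the lemma amounts to verifying that $\Gamma(\Sigma)$ is totally separated. By the homeomorphism $\Gamma \colon \Sigma \to \Gamma(\Sigma)$ of Lemma \ref{properties of Gamma}(i), this is equivalent to showing that $\Sigma$ itself is totally separated.

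To separate two distinct Pierce sequences $\sigma \coloneqq (\sigma_k)_{k \in \mathbb{N}}$ and $\tau \coloneqq (\tau_k)_{k \in \mathbb{N}}$ in $\Sigma$, I would take the smallest index $n$ at which they disagree, and use a cylinder-type set. Since $\sigma_n \neq \tau_n$, at least one of them is a positive integer (both cannot equal $\infty$); after swapping the roles of $\sigma$ and $\tau$ if necessary, I assume $\tau_n \in \mathbb{N}$. Define
\[
V \coloneqq \{ \omega \coloneqq (\omega_k)_{k \in \mathbb{N}} \in \Sigma : \omega_k = \tau_k \text{ for all } 1 \leq k \leq n \}
\]
and $U \coloneqq \Sigma \setminus V$. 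Then $\tau \in V$ while $\sigma \in U$, and $\Sigma = U \cup V$ is a disjoint union. It remains to check that $V$ is clopen in $\Sigma$, which then makes $U$ clopen as well and completes the separation.

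The key observation, which is the only subtle point, is that each coordinate condition $\omega_k = \tau_k$ for $1 \leq k \leq n$ specifies $\omega_k$ as an element of $\mathbb{N}$, not as $\infty$. For $k = n$ this is our assumption $\tau_n \in \mathbb{N}$, and for $k < n$ it holds because $\sigma_k = \tau_k$ (by minimality of $n$) forces $\tau_k \neq \infty$: if some $\tau_k = \infty$ with $k < n$ then by the definition of $\Sigma$ we would have $\tau_n = \infty$, contradicting $\tau_n \in \mathbb{N}$. Since each singleton $\{ \tau_k \} \subseteq \mathbb{N}$ is both open and closed in $(\mathbb{N}_\infty, \mathcal{T})$, the set $V$ is a finite-codimension product of clopen factors in $\mathbb{N}_\infty^\mathbb{N}$, intersected with $\Sigma$; hence $V$ is clopen in $\Sigma$.

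The main obstacle is precisely this clopenness argument: the point $\infty$ is not isolated in $\mathbb{N}_\infty$, so one could not separate sequences by coordinates that are fixed to $\infty$. The structural property of Pierce sequences—that an $\infty$ at some position forces $\infty$ at every later position—is exactly what rules out this bad case above and allows the separation to go through. Once total separation of $\Sigma$ is established, Lemma \ref{properties of Gamma} transfers it to $\Gamma(\Sigma)$, and Proposition \ref{covering dimension lemma} yields $\covdim \Gamma(\Sigma) = 0$.
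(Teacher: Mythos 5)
Your proposal is correct, and the high-level skeleton (compactness plus Proposition \ref{covering dimension lemma} plus total separation of $\Sigma$, transported through the homeomorphism $\Gamma$) is the same as the paper's. Where you diverge is in the proof that $\Sigma$ is totally separated. The paper proceeds abstractly: $\mathbb{N}_\infty$ is totally separated, a countable product of totally separated spaces is totally separated, and a subspace of a totally separated space is again totally separated; hence $\Sigma \subseteq \mathbb{N}_\infty^\mathbb{N}$ inherits the property. You instead construct the separating clopen sets explicitly via cylinders in $\Sigma$. Both are valid, and your version is more self-contained in the sense that it does not quietly invoke the hereditary and productive behaviour of total separation.

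One remark on your construction: you work harder than necessary. You fix all of the first $n$ coordinates and then argue, using the propagation of $\infty$ in Pierce sequences, that $\tau_k \in \mathbb{N}$ for every $k \leq n$. This is correct, but the Pierce structure is not actually needed here. Since $\tau_n \in \mathbb{N}$ (your WLOG), the singleton $\{ \tau_n \}$ is clopen in $(\mathbb{N}_\infty, \mathcal{T})$, so the set $V' \coloneqq \{ \omega \in \Sigma : \omega_n = \tau_n \}$, which fixes only the $n$-th coordinate, is already clopen in $\Sigma$ and separates $\tau$ from $\sigma$. Your longer cylinder $V \subseteq V'$ works too, but the structural observation about $\infty$ propagating forward is not needed for this lemma (it is essentially how the paper shows $\Sigma$ is closed in $\mathbb{N}_\infty^\mathbb{N}$, and it is what prevents $\Sigma$ from being open). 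In short: the argument is sound, the appeal to the Pierce structure is a detour rather than an error, and the paper sidesteps all of this by using the standard permanence properties of total separation.
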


\begin{proof}
Obviously, $\Gamma (\Sigma) \subseteq \mathbb{R}^2$ is non-empty and Hausdorff, and, furthermore, by Lemma \ref{properties of Gamma}(ii), it is compact. By Proposition \ref{covering dimension lemma}, it is sufficent to show that $\Gamma (\Sigma)$ is totally separated. To see this, first recall from Lemma \ref{properties of Gamma}(i) that $\Gamma \colon \Sigma \to \Gamma (\Sigma)$ is a homeomorphism. It is clear that $\mathbb{N}_\infty$ is totally separated, and so is its (countable) product $\mathbb{N}_\infty^\mathbb{N}$. It follows that $\Sigma \subseteq \mathbb{N}_\infty^\mathbb{N}$ is also totally separated. Hence its homeomorphic image $\Gamma (\Sigma)$ is totally separated. This proves the result.
\end{proof}

\begin{proof}[Proof of Theorem \ref{covering dimension theorem}]
On one hand, since $G(I) \neq \varnothing$, we have $\covdim G(I) \geq 0$ by \cite[Example~1.1.9]{Coo15}. On the other hand, since $G(I)$ is a subset of the metrizable space $\Gamma (\Sigma) \subseteq \mathbb{R}^2$, Proposition \ref{monotonicity lemma} and Lemma \ref{totally separated lemma} tell us that $\covdim G(I) \leq 0$. This completes the proof.
\end{proof}

\section*{Acknowledgements}

I wish to thank my advisor, Dr.\ Hanfeng Li, for helpful comments and suggestions, continuous support, and encouragement to work on the subject.

\end{document}